\pdfminorversion=4
\documentclass[oneside,11pt,reqno,a4paper,final]{amsart}
\usepackage[T1]{fontenc}
\usepackage[utf8]{inputenc}
\usepackage[top=1cm,bottom=2cm,right=2cm,left=2cm,includehead]{geometry}
\usepackage{amsmath,amsfonts,amssymb,amscd,amsthm}
\usepackage[usenames,dvipsnames,svgnames]{xcolor}
\usepackage[numbers,sort&compress]{natbib}
\usepackage{doi}
\usepackage{graphicx}
\usepackage[font={small}]{caption} 
\usepackage[font={small}]{subcaption}
\usepackage{autonum} 
\usepackage{hyperref} 
\usepackage{enumitem}
\usepackage{changes}
\newcommand{\myadded}[1]{{\leavevmode #1}}

\newcommand{\thetitle}{\uppercase{Robust and scalable h-adaptive aggregated 
unfitted \\ finite elements for interface elliptic problems}}

\newcommand{\theauthors}{Eric Neiva\textsuperscript{a,b,}\footnote{Corresponding
author. \\ Emails: \theemailsanddate} and Santiago Badia\textsuperscript{c,a}} 
\newcommand{\theauths}{E. Neiva and S. Badia}

\newcommand{\theaffiliations}{
	\textsuperscript{a} CIMNE – Centre Internacional de M\`etodes Num\`erics en
	Enginyeria, \\ Edifici C1, Campus Nord UPC, C. Gran Capit\`a S/N, 08034
	Barcelona, Spain. \\ [0.5em]
	\textsuperscript{b} Department of Civil and	Environmental Engineering, 
	Universitat Polit\`ecnica de Catalunya, \\ Edifici C2, Campus Nord UPC, 
	C. Jordi Girona 1-3, 08034 Barcelona, Spain. \\ [0.5em]
	\textsuperscript{c} School of Mathematics, Monash University, Clayton, 
	Victoria, 3800, Australia.
}

\newcommand{\theemailsanddate}{
	\texttt{eneiva@cimne.upc.edu},
	\texttt{santiago.badia@monash.edu} \\ 
	\today
}

\newcommand{\thethanks}{
\begin{small}
Financial support from the European Commission under the FET-HPC ExaQUte project
(Grant agreement ID: 800898) within the Horizon 2020 Framework Programme is
gratefully acknowledged. This work has been partially funded by the project
MTM2014-60713-P from the ``Ministerio de Econom\'ia, industria y Competitividad''
of Spain. E. Neiva gratefully acknowledges the support received from the Catalan
Government through a FI fellowship (2019 FI-B2-00090; 2018 FI-B1-00095; 2017
FI-B-00219). S. Badia gratefully acknowledges the support received from the
Catalan Government through the ICREA Acad\`emia Research Program. The authors
thankfully acknowledge the computer resources at Marenostrum-IV and the technical
support provided by the Barcelona Supercomputing Center (RES-ActivityIDs:  
IM-2019-3-0008, IM-2020-1-0002). Financial support to CIMNE via the CERCA
Programme / Generalitat de Catalunya is also acknowledged.
\end{small}
}

\hypersetup{
    bookmarks=true,
    breaklinks=true,
    bookmarksopen=true,
    pdftitle={\thetitle},  
    pdfauthor={\theauths}, 
    colorlinks=true,       
    linkcolor=black,       
    citecolor=blue,        
    filecolor=black,       
    urlcolor=blue          
}



\setlength{\textfloatsep}{0.5em}
\setlength{\intextsep}{0.5em}

\usepackage{fancyhdr}

\pagestyle{fancy}

\fancyhead{}
\fancyfoot{}

\fancyhead[C]{\uppercase{\scriptsize Robust and scalable h-adaptive aggregated 
unfitted finite elements for interface elliptic problems}}

\fancyhead[R]{\scriptsize \thepage}

\usepackage{newtxtext}
\usepackage{newtxmath}

\usepackage{acronym}
\usepackage[most]{tcolorbox}

\usepackage{booktabs}

\usepackage{tikz}
\definecolor{myellow}{RGB}{255,230,128}
\definecolor{gray20}{RGB}{204,204,204}
\definecolor{mygray}{RGB}{204,204,204}
\definecolor{mygreen}{RGB}{138,203,95}
\definecolor{myblue}{RGB}{77,151,214}

\definecolor{lstgrey}{rgb}{0.95,0.95,0.95}
\usepackage{listings}
\lstset{language=bash,
        backgroundcolor=\color{lstgrey},
        frame=single,
        basicstyle=\footnotesize\ttfamily,
        captionpos=b,
        tabsize=2,
        numbers=left,
        numberstyle=\tiny,
        escapeinside={(*@}{@*)}
}

\usepackage[vlined]{algorithm2e}
\usepackage{xcolor}



\SetAlFnt{\small}
\SetArgSty{textnormal}


\SetAlCapSty{xAlCapSty}


\SetCommentSty{xCommentSty}


\SetNlSty{mynlfont}{}{} 

\LinesNumbered

\SetSideCommentRight

\DontPrintSemicolon

\RestyleAlgo{algoruled}


\SetInd{0.5em}{0.5em}

\acrodef{pde}[PDE]{partial differential equation}
\acrodef{bvp}[BVP]{boundary value problem}
\acrodef{amr}[AMR]{Adaptive mesh refinement and coarsening}
\acrodef{ls}[LS]{Level-Set}
\acrodef{dof}[DOF]{degree of freedom}
\acrodefplural{dof}[DOFs]{degrees of freedom}
\acrodef{vef}[VEF]{vertex, edge, and face}
\acrodefplural{vef}[VEFs]{vertices, edges, and faces}
\acrodef{cg}[CG]{continuous Galerkin}
\acrodef{dg}[DG]{discontinuous Galerkin}
\acrodef{fe}[FE]{finite element}
\acrodef{fem}[FEM]{finite element method}
\acrodef{xfem}[XFEM]{extended finite element method}
\acrodef{agfe}[agFE]{aggregated finite element}
\acrodef{agfem}[AgFEM]{aggregated finite element method}
\acrodef{cgm}[CG]{conjugate gradient}
\acrodef{amg}[AMG]{algebraic multigrid}
\acrodef{lb}[LB]{Li and Bettess}
\acrodef{ob}[OB]{O\~{n}ate and Bugeda}
\acrodef{dd}[DD]{domain decomposition}
\acrodef{mpi}[MPI]{message passing interface}
\acrodef{hpc}[HPC]{high-performance computing}
\acrodef{sipm}[SIPM]{symmetric interior penalty method}

\newtheorem{theorem}{Theorem}[section]
\newtheorem{lemma}[theorem]{Lemma}
\newtheorem{proposition}[theorem]{Proposition}

\graphicspath{{figures/}}

\def\fempar{{\texttt{FEMPAR}}}

\def\p4est{{\texttt{p4est}}}
\def\t8code{{\texttt{t8code}}}

\def\petsc{{\texttt{PETSc}}}

\def\matlab{{\texttt{MATLAB}}}
\def\gamg{{\texttt{GAMG}}}


\def\d{{\mathrm{d}}}
\def\cell{T}
\def\T{{\mathcal{T}}}
\def\V{{\mathcal{V}}}

\def\x{{\boldsymbol{x}}}
\def\u{{\boldsymbol{u}}}
\def\v{{\boldsymbol{v}}}
\def\f{{\boldsymbol{f}}}
\def\g{{\boldsymbol{g}}}
\def\j{{\boldsymbol{j}}}
\def\w{{\boldsymbol{w}}}
\def\h{{\boldsymbol{h}}}
\def\n{{\boldsymbol{n}}}
\def\H{{\boldsymbol{H}}}
\def\L{{\boldsymbol{L}}}
\def\s{{\boldsymbol{\sigma}}}
\def\eps{{\boldsymbol{\varepsilon}}}
\def\R{{\mathcal{R}}}

\def\C{{\mathrm{C}}}

\def\ag{{\mathrm{ag}}}
\def\act{{\mathrm{A}}}

\def\std{{\mathrm{std}}}
\def\sz{{\mathrm{SZ}}}
\def\meas{{\mathrm{meas}}}
\def\W{{\mathrm{W}}}
\def\I{{\mathrm{I}}}
\def\E{{\mathrm{E}}}
\def\bsnabla{\boldsymbol{\nabla}}

\newcommand{\normstab}[2]{{\| #1 \|_{\V(h)}^{#2}}}

\newcommand{\normcont}[2]{{\left\vert\kern-0.25ex\left\vert\kern-0.25ex\left\vert
 #1 
 \right\vert\kern-0.25ex\right\vert\kern-0.25ex\right\vert_{\V}^{#2}}}
\newcommand{\brokendomain}{{\Omega^+ \cup \Omega^-}}

\newcommand*{\ldblbrace}{\{\mskip-3.5mu\{}
\newcommand*{\rdblbrace}{\}\mskip-3.5mu\}}

\newcommand{\restrict}[2]{{\left. #1 \right|_{#2}}}

\newcommand{\jump}[1]{{\left\llbracket #1 \right\rrbracket}}
\newcommand{\average}[1]{{\left\ldblbrace #1 \right\rdblbrace}}
\newcommand{\norm}[3]{{\left\| #1 \right\|_{#2}^{#3}}}
\newcommand{\seminorm}[3]{{\left| #1 \right|_{#2}^{#3}}}

\setlength{\fboxsep}{0pt}

\begin{document}

\thispagestyle{empty}

\renewcommand*{\thefootnote}{\fnsymbol{footnote}}

\begin{center}
{ \bf {\thetitle}}

\vspace*{1em}

\theauthors

\vspace*{1em}

\theaffiliations

\end{center}

\setcounter{footnote}{0}
\renewcommand*{\thefootnote}{\arabic{footnote}}

\begin{center}

{\bf Abstract}

\vspace*{1em}

\begin{minipage}{0.9\textwidth}
\begin{small}

This work introduces a novel, fully robust and highly-scalable, $h$-adaptive
aggregated unfitted finite element method for large-scale interface elliptic
problems. The new method is based on a recent distributed-memory implementation of
the aggregated finite element method atop a highly-scalable Cartesian
forest-of-trees mesh engine. It follows the classical approach of weakly coupling
nonmatching discretisations at the interface to model internal discontinuities at
the interface. We propose a natural extension of a single-domain parallel cell
aggregation scheme to problems with a finite number of interfaces; it
straightforwardly leads to aggregated finite element spaces that have the
structure of a Cartesian product. We demonstrate, through standard numerical
analysis and exhaustive numerical experimentation on several complex Poisson and
linear elasticity benchmarks, that the new technique enjoys the following
properties: well-posedness, robustness with respect to cut location and material
contrast, optimal (h-adaptive) approximation properties, high scalability and easy
implementation in large-scale finite element codes. As a result, the method offers
great potential as a useful finite element solver for large-scale
\replaced{interface}{multiphase and multiphysics} problems modelled by partial
differential equations.

\end{small}

\end{minipage}
\end{center}

\vspace*{1em}




\noindent{\bf Keywords:} Unfitted finite elements $\cdot$ Interface linear
elasticity $\cdot$ Interface Poisson $\cdot$ Adaptive mesh refinement $\cdot$
High performance scientific computing


\section{Introduction}
\label{sec:intro}

Unfitted \ac{fe} methods are generating considerable interest in many practical
situations. Their ability to handle complex geometries, avoiding cumbersome and
time-consuming body-fitted mesh generation, makes them especially appealing for
large-scale simulations. They have been successfully exploited in
\replaced{many}{multiphase and multiphysics} applications with moving interfaces,
\myadded{such as fracture
mechanics~\cite{Sukumar2000,Waisman2013,berger-vergiat_inexact_2012},
fluid-structure
interaction~\cite{schott2019monolithic,alauzet2016nitsche,zonca2018unfitted,Massing2015},
two-phase and free surface
flows~\cite{Sauerland2011,SAYE2017683,kirchhart2016analysis}, and in applications
with varying domains, such as shape or topology
optimisation~\cite{Burman2018,feppon2019shape}, additive
manufacturing~\cite{neiva2020numerical,carraturo2020modeling}, and stochastic
geometry problems~\cite{badia2019embedded}}. In the numerical community, unfitted
\ac{fe} methods receive different denominations. When the motivation is to capture
(moving) interfaces, they are usually referred to as eXtended \ac{fe} methods
(XFEM)~\cite{belytschko_arbitrary_2001}. On the other hand, when the goal is to
simulate a problem using a (usually simple) background mesh, they are denoted as
\emph{unfitted} or \emph{embedded} or \emph{immersed} techniques; see, e.g.~the
cutFEM method~\cite{burman_cutfem_2015}, the cutIGA method~\cite{Elfverson2018},
the immersed boundary method~\cite{Mittal2005}, the finite cell
method~\cite{Schillinger2015}, \myadded{the shifted boundary
method~\cite{main2018shifted}, the immersogeometric
method~\cite{kamensky2015immersogeometric} and \ac{dg} methods
with cell
aggregation~\cite{saye2017implicit,engwer2012dune,johansson2013high,muller2017high,sollie2011space}.}

This work investigates unfitted \ac{fe} methods in \emph{large scale} simulations
of \replaced{interface}{multiphysics} problems modelled with \acp{pde}.
\deleted{In particular, it centres upon interface problems.} \myadded{Typical
approaches pursued to model internal discontinuities across the unfitted interface
are (1) weak coupling of nonmatching discretisations~\cite{hansbo2002unfitted},
(2) local partition-of-unity enrichments~\cite{melenk1996partition} and (3)
Lagrange multiplier or mortar
methods~\cite{bechet2009stable,burman2010fictitious}, although all three are
closely connected~\cite{areias2006comment,stenberg1995some}.} This work focuses on
the first approach. It broadly consists in dividing the mesh into two (sub)meshes
that overlap in cut cells. It leads to \ac{fe} approximations that have the
structure of a Cartesian product. Transmission conditions on the unfitted
interface are then weakly enforced by means of penalty~\cite{babuvska1973finite}
or Nitsche~\cite{nitsche_uber_2013} formulations, among others.

In the context of unfitted interface methods, the main challenge is to derive
robust methods for large material contrast across the interface. Indeed, naive
variational formulations may exhibit poor stability in this regime, e.g.~average
numerical flux weighting in Nitsche methods produces inaccurate and oscillating
approximation of interface quantities~\cite{annavarapu2012robust}. On the other
hand, large material contrast problems are prone to the so-called \emph{small cut
cell problem}. This issue is formally circumscribed to the unfitted boundary case
and it is associated with cut cells with arbitrarily small intersection with the
physical domain. Unless a specific technique mitigates the problem, numerical
integration on these badly-cut cells leads to severe ill-conditioning
problems~\cite{DePrenter2017,Badia2018}. Since unfitted boundary problems can be
interpreted as a limiting case of large contrast interface ones, the latter are
not completely immune to the issue~\cite{burman2011numerical}.

Despite vast literature on the
topic~\cite{Kummer2017,lehrenfeld2016high,guzman2017finite,li2019shifted}, fewer
authors achieve formulations that are fully robust and optimal, regardless of cut
location and material contrast. A notable exception is the family of methods that
rely on \emph{ghost penalty}~\cite{burman_cutfem_2015,gurkan2019stabilized}. These
works adopt approach (1) and enrich the variational formulation with suitable
stabilization terms defined in the faces of cut cells; the resulting formulation
is robust to cut location. Besides, robustness w.r.t.~material contrast is
achieved by using the so-called harmonic weights in the Nitsche formulation, a
typical approach in body-fitted \ac{dg} methods~\cite{codina2013design}. As a
result, the condition number of the diagonally-scaled system matrix becomes
independent of the material
contrast~\cite{burman_cutfem_2015,burman2011numerical}. However, research in this
area has tended to overlook scalability and $hp$-adaptivity, which are essential
aspects in applications to large-scale problems. These aspects have been
considered by the finite cell method
community~\cite{ruess2014weak,elhaddad2018multi}, but robustness w.r.t.~material
contrast has barely received their attention.

Research over the past few years is turning to an alternative approach to ensure
robustness with respect to cut location, the so-called \emph{cell aggregation} or
\emph{cell agglomeration} techniques. This approach \myadded{is very natural in
\ac{dg} methods, as they can be easily formulated on agglomerated
meshes~\cite{helzel_high-resolution_2005,Kummer2017,bastian2009unfitted}.} The
extension of these ideas to conforming discretisations is less obvious, since such
aggregation process requires to keep trace continuity among cells. With this aim,
the (\ac{cg}) aggregated unfitted \ac{fem}, referred to as
Ag\ac{fem}~\cite{Badia2018}, is grounded on a \emph{discrete extension} operator
from well-posed to ill-posed \acp{dof}. This operator is defined in terms of a
cell aggregation and is amenable to arbitrarily  complex 3D geometries and
$h$-adaptivity~\cite{inpreparation2020}. In spite of this, research has been
restricted so far to unfitted boundary elliptic~\cite{Badia2018} or
Stokes~\cite{Badia2018a} problems. Aggregation has also been used for
\ac{cg}~\cite{huang2017unfitted} methods, but the resulting scheme relies on the
assumption that the aggregates can always be rectangles. However, such assumption
is wrong, even in two-dimensions; aggregates have more complicated shapes in
general geometries and meshes. The authors in~\cite{huang2017unfitted} picked an
elementary 2D circular Poisson problem in a square with a circular inclusion,
discretised with a uniform Cartesian grid, in a mesh in which rectangular
aggregates only where possible. Aggregation has been recently employed for
hybrid-high order (HHO)~\cite{burman2019unfitted}, even though these methods are
not considering face aggregation strategies and thus, their trace unknowns can
lead to ill-posed problems. 

The main goal of this work is to present a novel aggregated \ac{fe} method for
interface elliptic \acp{bvp}. In contrast with other existing \myadded{\ac{cg}}
methods, we clearly show that interface Ag\ac{fem} enjoys \emph{overall}
well-behaved numerical properties and remarkable large-scale capability. In
particular, we demonstrate, with theoretical results and thorough numerical
experimentation, well-posedness, robustness w.r.t.~to cut location and material
contrast, optimal ($h$-adaptive) approximation properties, high scalability and
ease of implementation in \ac{hpc} \ac{fe} codes. The paper gives full insight
into Ag\ac{fem}, as a large-scale \ac{fe} solver for complex
\replaced{interface}{multiphase and multiphysics} problems modelled by \acp{pde}.
It is also intended to provide guidance in exploiting other unfitted \ac{cg}
methods by aggregation for interface problems.

The outline of this work is as follows. We assume first an embedded (multiple)
$n$-interface geometrical setting in Section~\ref{sec:embedded-geom}. Next, we
extend the single-domain cell aggregation method in~\cite{Badia2018} to
$n$-interface problems, in Section~\ref{sec:interface-ag}. Cell aggregation can be
carried out independently on each subdomain and reuse, with little effort,
existing distributed-memory implementations of the single-domain
algorithm~\cite{Verdugo2019}. In Section~\ref{sec:ag-fe-space}, we define
Ag\ac{fe} spaces for embedded $n$-interfaces; we see that they easily accommodate
the interface-overlapping mesh approach in~\cite{hansbo2002unfitted}. Afterwards,
we restrict ourselves to the approximation of single interface linear elasticity
problems, see Section~\ref{sec:model-problem}. We derive a similar formulation to
body-fitted \ac{dg} methods~\cite{arnold2002unified}, using the symmetric interior
penalty method and harmonic average weights, to weakly enforce interface
conditions, see Section~\ref{sec:dis-for}. Numerical analysis, proving
well-posedness and a priori error estimates, are also covered there; all results
are stable \myadded{with respect} to cut location and material contrast. We
implement the method in the large-scale \ac{fe} software package
\fempar~\cite{badia-fempar}, which exploits the highly-scalable forest-of-tree
mesh engine \p4est~\cite{burstedde_p4est_2011} for $h$-adaptivity. In the
numerical tests of Section~\ref{sec:num-exps}, we consider both the linear
elasticity and Poisson equations as model problems on several complex geometries
and several $hp$-\ac{fem} standard benchmarks. We numerically assess optimal
convergence rates on uniform and $h$-adaptive meshes, robustness \myadded{with
respect} to cut location and material contrast, and weak-scalability. Finally, we
report the main conclusions and contributions of the work in
Section~\ref{sec:conclusions}.

\section{The aggregated unfitted finite element method on interface problems}
\label{sec:interface-agfem}

\subsection{Embedded interface geometry setup}
\label{sec:embedded-geom}

Let $\Omega \subset \mathbb{R}^d$, with $d = 2,3$ denoting the space dimension, be
an open, bounded, connected domain, with Lipschitz boundary $\partial \Omega$.
Since we seek to analyse problems with multiple physics and/or phases, let $\{
\Omega^i \}_{i=1}^N$ be a partition of $\Omega$ into $N$ subdomains $\Omega^i$
with Lipschitz boundaries $\partial \Omega^i$. Let now $\Gamma_0 \doteq
\bigcup_{i=1}^N \partial \Omega^i \setminus \partial \Omega$ denote the skeleton
of the partition. Equivalently, there is a partition of $\Gamma_0$ into
$\Gamma^{ij} \doteq \partial \Omega^i \cap \partial \Omega^j$, such that $\Gamma_0
\doteq \bigcup_{i,j=1}^N \Gamma^{ij}$. Let $N_0$ denote the number of non-empty
$\Gamma^{ij}$, for all $i,j = 1,\ldots,N$. The setting is represented in
Figure~\ref{fig:emb-int-a}.

\begin{figure}[ht!]
	\centering
	\begin{subfigure}{\textwidth}
		\vspace{0.1cm}
		\centering
		{\Large \color{blue} $\bullet$} $\Sigma_\W^i$ \enskip
		{\color[RGB]{255,0,255} $\boldsymbol{\times}$} $\Sigma_\I^i$ \qquad
		\fbox{{\color[RGB]{255,230,128} \rule{10pt}{8pt}}} $\T_{h,\W}^1$ \enskip
		\fbox{{\color[RGB]{255,246,213} \rule{10pt}{8pt}}} $\T_{h,\I}^1$ \qquad
		\fbox{{\color[RGB]{255,179,128} \rule{10pt}{8pt}}} $\T_{h,\W}^2$ \enskip
		\fbox{{\color[RGB]{255,230,213} \rule{10pt}{8pt}}} $\T_{h,\I}^2$ \qquad
		\fbox{{\color[RGB]{229,255,128} \rule{10pt}{8pt}}} $\T_{h,\W}^3$ \enskip
		\fbox{{\color[RGB]{246,255,213} \rule{10pt}{8pt}}} $\T_{h,\I}^3$ \qquad
		\fbox{{\color{white} \rule{10pt}{8pt}}} $\T_{h,\E}^i$
	\end{subfigure} \\
	\begin{subfigure}{0.24\textwidth}
	  \centering
	  \includegraphics[width=0.9\textwidth]{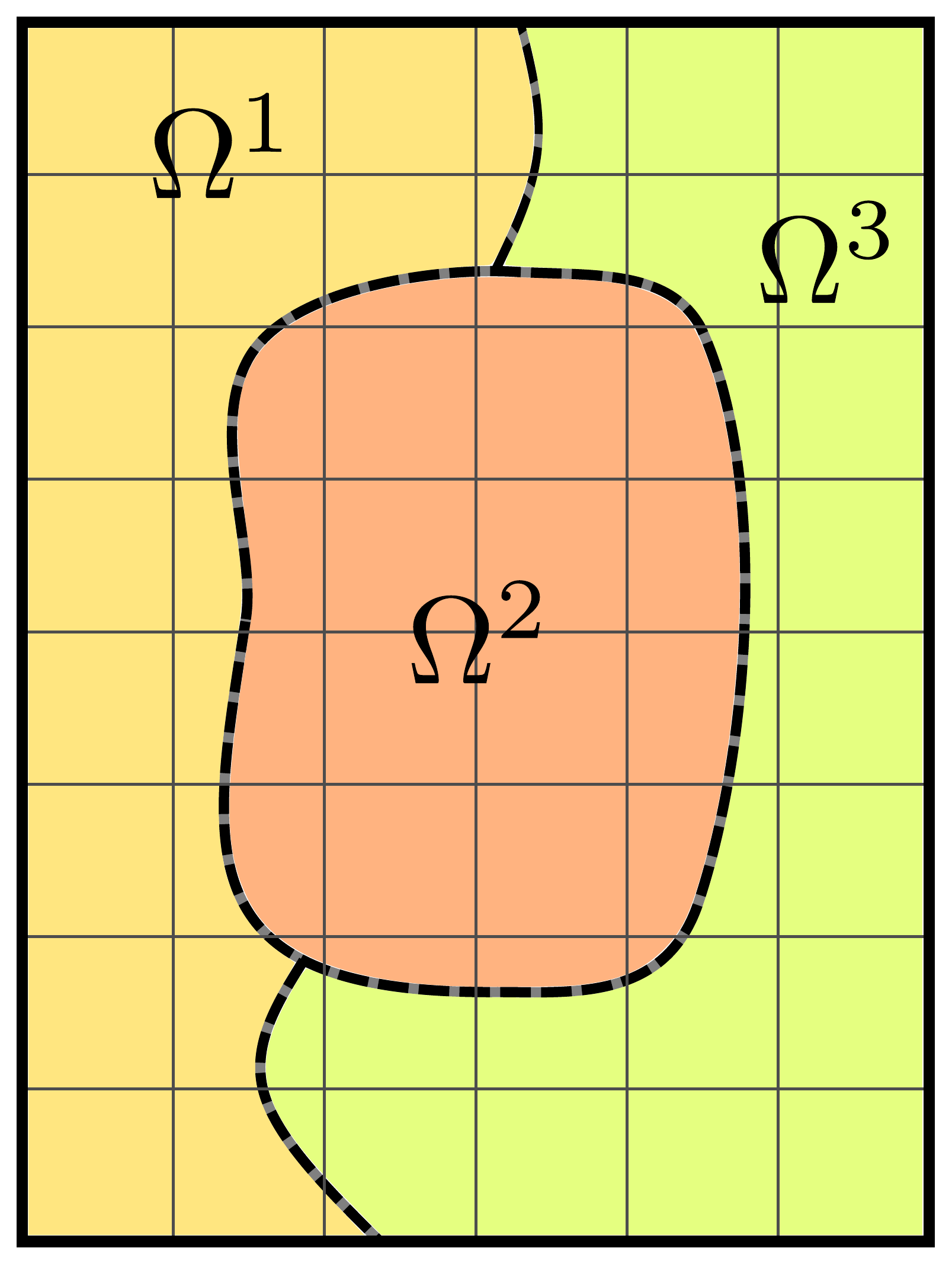}
	  \caption{$\Omega$, $\T_h$ and (dashed) $\Gamma_0$}
	  \label{fig:emb-int-a}
	\end{subfigure}
	\begin{subfigure}{0.24\textwidth}
	  \centering
	  \includegraphics[width=0.9\textwidth]{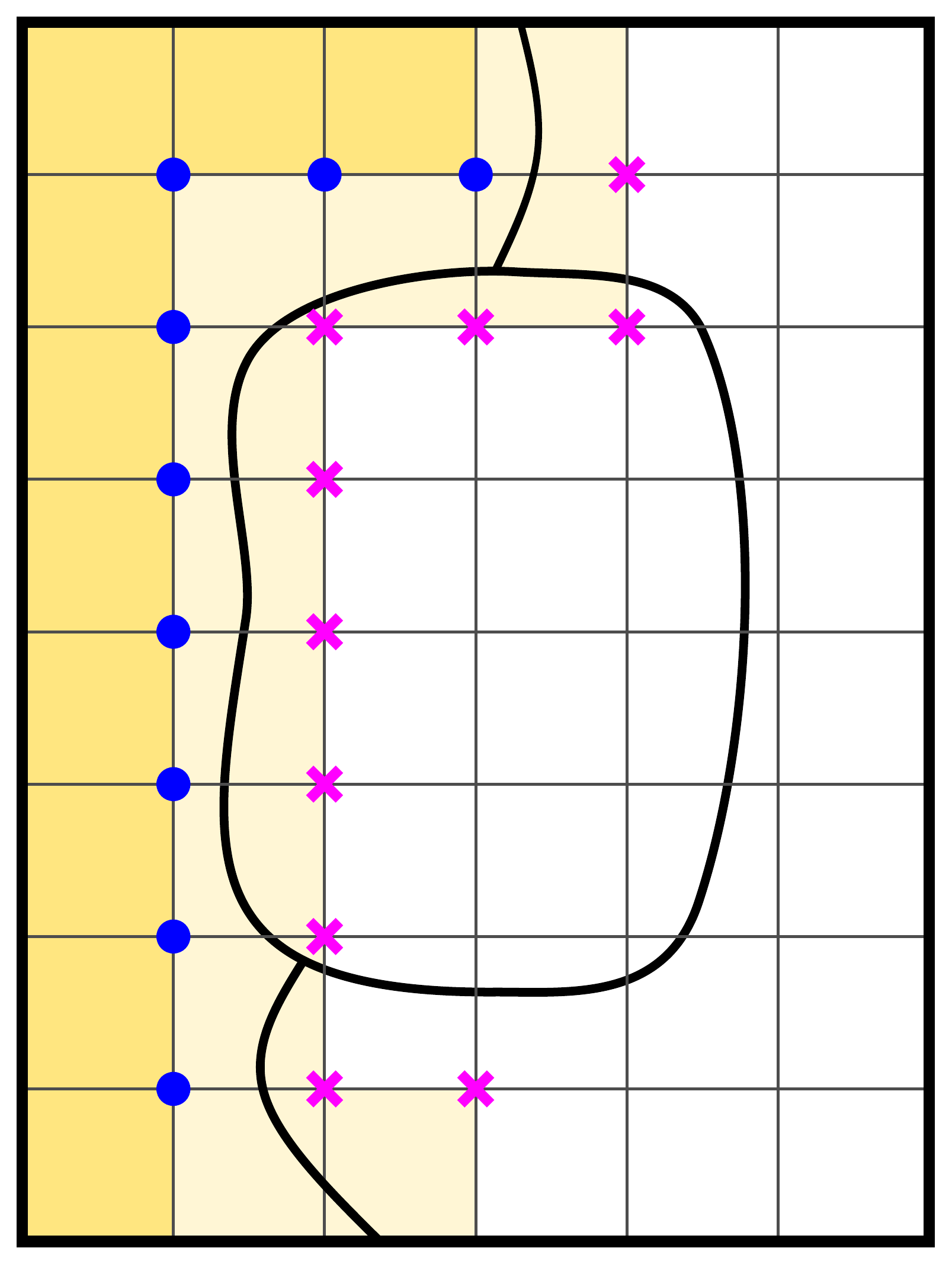}
	  \caption{$\T_{h,\act}^1$ and $\V_{h,\act}^1$}
	  \label{fig:emb-int-b}
	\end{subfigure}
	\begin{subfigure}{0.24\textwidth}
	  \centering
	  \includegraphics[width=0.9\textwidth]{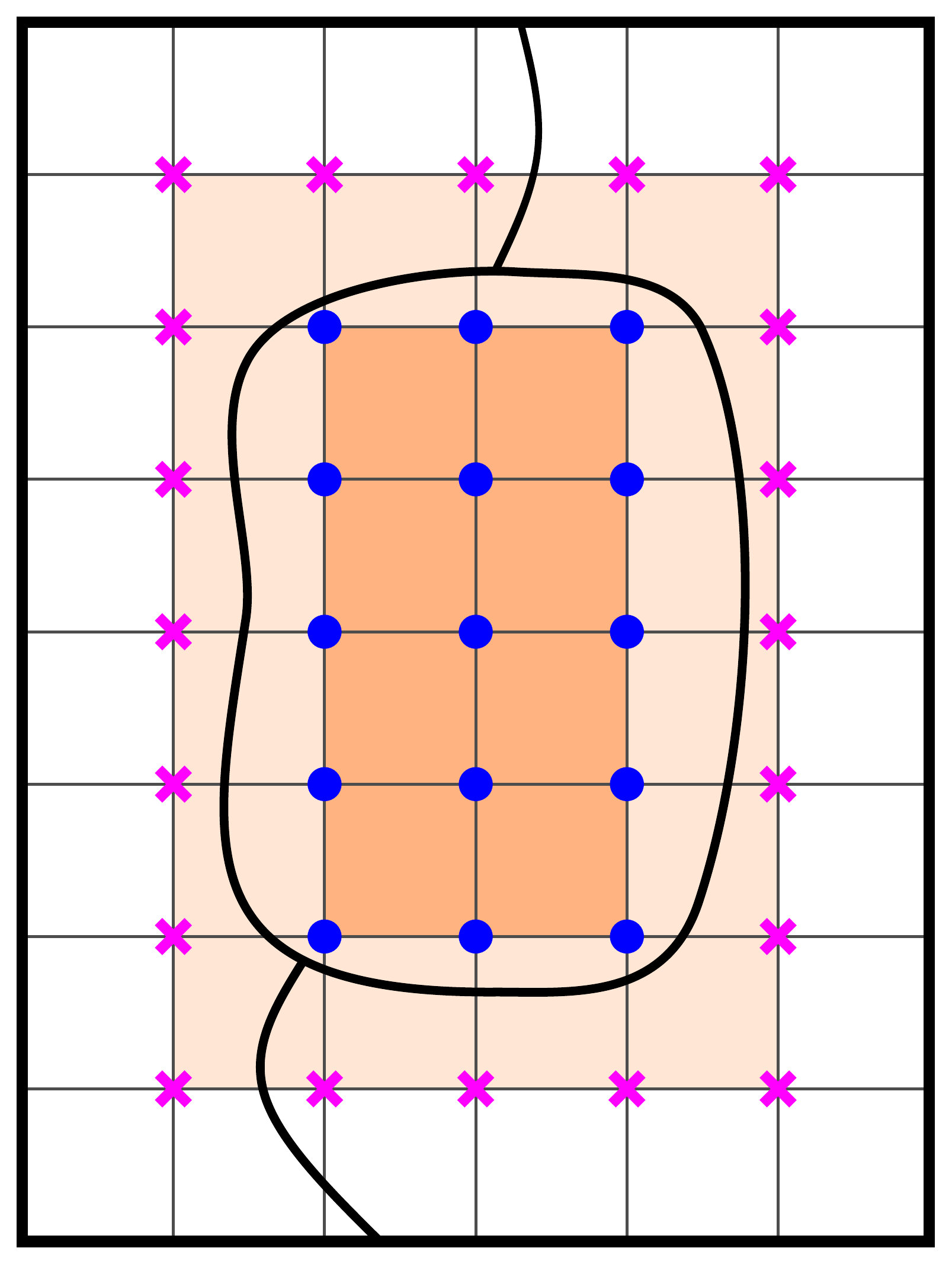}
	  \caption{$\T_{h,\act}^2$ and $\V_{h,\act}^2$}
	  \label{fig:emb-int-c}
	\end{subfigure}
	\begin{subfigure}{0.24\textwidth}
	  \centering
	  \includegraphics[width=0.9\textwidth]{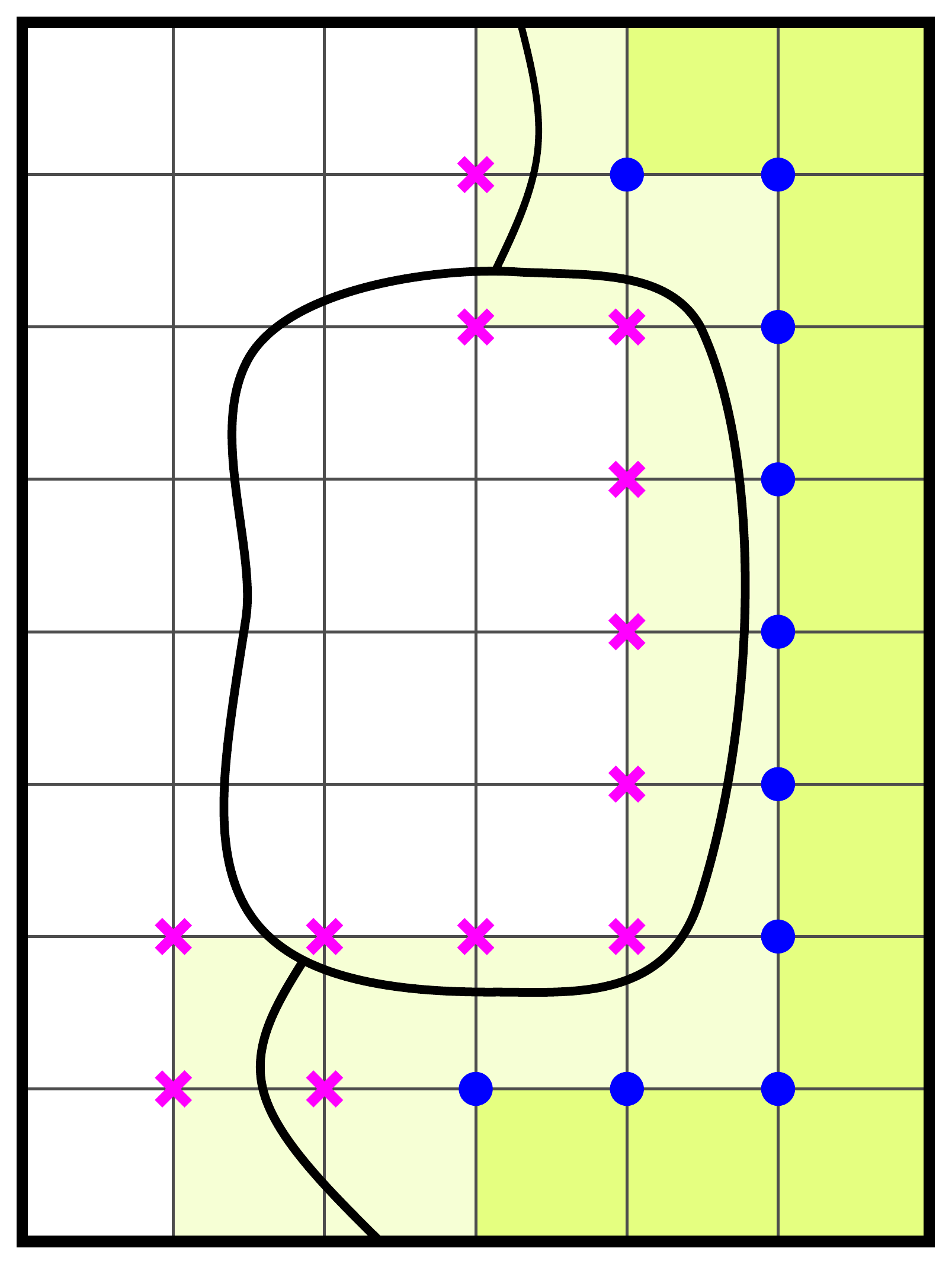}
	  \caption{$\T_{h,\act}^3$ and $\V_{h,\act}^3$}
	  \label{fig:emb-int-d}
	\end{subfigure}
	\caption[An embedded interface geometry setup for $N = 3$ and $\eta_0 = 1$]{An
	embedded interface geometry setup for $N = 3$ and $\eta_0 = 1$,
	i.e.~well-posed if and only if interior and ill-posed if and only if cut. The
	boundary of the physical domain $\partial \Omega$ conforms to the mesh $\T_h$,
	whereas the skeleton $\Gamma_0$ is immersed in it. $\{\T_{h,\act}^i
	\}_{i=1}^3$ forms a partition of $\T_h$, \emph{overlapping} at cells cut by
	the skeleton $\Gamma_0$. As a result, degrees of freedom on cut cells are
	doubled or tripled (assuming linear lagrangian \acp{fe}). We consider
	partitions of \acp{dof} in $\V_{h,\act}^i$ into well-posed $\Sigma_\W^i$ and
	ill-posed $\Sigma_\I^i$ \acp{dof} { (note that we omit Dirichlet \acp{dof})}.
	Ill-posed \acp{dof} are constrained in terms of well-posed \acp{dof}, see
	Equation~\eqref{eq:agfem-constraint} and Figure~\ref{fig:agfem_constr}.}
	\label{fig:emb-int}
\end{figure}

We introduce now a typical embedded interface setup. To focus on the interface
problem, we assume that $\Omega$ can be easily meshed with, e.g.~Cartesian grids
or unstructured $d$-simplexes, such that \emph{the external boundary $\partial
\Omega$ conforms to the mesh, whereas $\Gamma_0$ remains immersed}, as shown
in~Figure~\ref{fig:emb-int-a}. For simplicity in the exposition, let us consider
that the mesh is body-fitted with respect to $\partial \Omega$, even though the
general case can readily be tackled using the techniques in~\cite{Badia2018}.
Instead, in this article, we focus on the extension of these techniques to
resolve immersed interfaces. According to this, let $\T_h$ be a partition of
$\Omega$ into cells{, the so-called \emph{background} mesh}. Any $\cell \in
\T_h$ is the image of a differentiable homeomorphism $\Phi_\cell$ over a set of
admissible open reference $d$-polytopes~\cite{badia-fempar}, such as
$d$-simplexes or $d$-cubes. We let $\T_h$ be \emph{non-conforming}, i.e.~there
can be hanging vertices, edges or faces. We assume that the mesh is
\emph{shape-regular} and $h_\cell$ represents the characteristic size of the
cell $\cell \in \T_{h}$.

We assume, without loss of generality, that the immersed skeleton $\Gamma_0$ is
represented by the zero level-set of one or several known scalar functions, the
so-called level-set functions, or by other means, e.g.~from 3D CAD data, using
techniques to compute the intersection between cell edges and surfaces (see,
e.g.~\cite{marco_exact_2015}). We also assume that we have suitable techniques
(e.g.~for local integration) to deal with cells that are intersected by more
than one interface $\Gamma^{ij}$. For any cell $\cell \in \T_h$, we define the
quantity 
\[
	\eta_\cell^i \doteq \frac{\meas_d(\cell \cap \Omega^i)}{\meas_d(\cell)},
	\quad \eta_\cell \in [0,1], \quad i = 1,\ldots,N,
\]
and a user-defined parameter $\eta_0 \in (0,1]$, referred to as the
\emph{well-posedness threshold}. To isolate badly cut cells, we classify cells
of $\T_h$ in terms of $\eta_\cell^i$ and $\eta_0$; it leads to subsets of $\T_h$
of the form
\[
	\T_{h,\W}^i = \{ \cell \in \T_h : \eta_\cell^i \geq \eta_0  \}, \quad
	\T_{h,\I}^i = \{ \cell \in \T_h : \eta_0 > \eta_\cell^i > 0 \}, \quad
	\T_{h,\E}^i = \{ \cell \in \T_h : \eta_\cell^i = 0          \},
\]
for $i = 1,\ldots,N$. $\T_{h,\W}^i$, $\T_{h,\I}^i$ and $\T_{h,\E}^i$ are the
well-posed ($\W$), ill-posed ($\I$) and exterior cells ($\E$) associated with
subdomain $\Omega^i$. $\T_{h,\W}^i$ contains interior cells or those with a
large portion inside $\Omega^i$, $\T_{h,\I}^i$, those with small cut portions in
$\Omega^i$, and $\T_{h,\E}^i$ those with empty intersection with $\Omega^i$. We
remark that, for $\eta_0 = 1$, well- or ill-posed cells coincide with interior
or cut cells. By definition, each triplet $\{ \T_{h,\W}^i, \T_{h,\I}^i,
\T_{h,\E}^i \}$, $i = 1,\ldots,N$, forms a nonoverlapping partition of $\T_h$.
We denote the union of cells of $\T_{h,\W}^i$, $\T_{h,\I}^i$ and $\T_{h,\E}^i$
by $\Omega_\W^i$, $\Omega_\I^i$ and $\Omega_\E^i$, e.g.~$\Omega_\W^i =
\bigcup_{\cell \in \T_{h,\W}^i} \overline{\cell}$. We also introduce the
\emph{active} meshes and domains, given by $\T_{h,\act}^i \doteq \T_{h,\W}^i
\cup \T_{h,\I}^i$ and $\Omega_\act^i \doteq \Omega_\W^i \cup \Omega_\I^i$, $i =
1,\ldots,N$; note that $\Omega^i \subset \Omega_\act^i$. It follows that
$\{\T_{h,\act}^i \}_{i=1}^N$ is a partition of $\T_h$, \emph{overlapping} at
cells cut by the skeleton $\Gamma_0$, see
Figures~\ref{fig:emb-int-b}-\ref{fig:emb-int-c}-\ref{fig:emb-int-d}. We observe
that our geometrical configuration generalises to multiple interfaces the
classical approach adopted in,
e.g.~\cite{burman_cutfem_2015,hansbo2002unfitted}, for single interface
problems. Indeed, for $N = 2$ ($N_0 = 1$), $\{ \T_{h,\act}^i \}_{i=1}^2$ is an
overlapping partition of $\T_h$, that divides the mesh into two (sub)meshes,
where cells cut by the interface are doubled.

\subsection{Cell aggregation with multiple interfaces}
\label{sec:interface-ag}

Cell aggregation for single-domain problems is well-covered in previous works,
e.g.~\cite{Badia2018}; here, we limit ourselves to lay out the extension of the
rationale to problems posed in domains with multiple interfaces, introduced in
Section~\ref{sec:embedded-geom}. We recall that aggregated \ac{fe} spaces are
grounded on a map, the so-called \emph{root cell map}. This map associates any
ill-posed cell with a well-posed cell, by means of a cell aggregation scheme,
described in, e.g.~\cite[Algorithm 2.2]{inpreparation2020}.

In our context, we assume we carry out cell aggregation independently on each
active mesh $\T_{h,\act}^i$, $i = 1,\ldots,N$, as illustrated in
Figure~\ref{fig:agg-int}; it yields the $i$-th root cell maps $\R^i :
\T_{h,\act}^i \to \T_{h,\W}^i$. For any $\cell \in \T_{h,\W}^i$, we refer to
$A_\cell^i \doteq (\R^i)^{-1}(\cell)$ as a cell aggregate rooted at $\cell$. By
construction of $\R^i$, aggregates take the form $A_\cell^i = \{ \cell_j\}_{0 \leq
j \leq m_\cell}$, where $\cell_0 = \cell \in \T_{h,\W}^i$ and $\cell_j \in
\T_{h,\I}^i$, $1 \leq j \leq m_\cell$, i.e.~they are composed of several ill-posed
cells and a unique (root) well-posed cell. Furthermore, aggregates are connected;
they are also disjoint in $\T_{h,\act}^i$, i.e.~for any $\cell,\cell' \in
\T_{h,\act}^i$, we have that $A_{\R^i(\cell)} \cap A_{\R^i(\cell')} = \emptyset$
or $\R^i(\cell) \equiv \R^i(\cell')$. It follows that $\T_{h,\ag}^i \doteq \{
A_\cell^i \}_{\cell \in \T_{h,\W}^i}$ are partitions of $\T_{h,\act}^i$ into cell
aggregates, for all $i = 1,\ldots,N$. We observe that cell aggregation schemes
only use the local information of each $\T_{h,\act}^i$, $i = 1,\ldots,N$; there is
no coupling between active (sub)meshes. As a result, implementation of a
multiple-domain cell aggregation scheme can fully reuse a single-domain
counterpart.

\begin{figure}[ht!]
	\centering
	\begin{subfigure}{\textwidth}
		\vspace{0.1cm}
		\centering
		$\T_{h,\ag}^1$:
		\fbox{{\color[RGB]{255,230,128} \rule{10pt}{8pt}}} aggregated \enskip
		\fbox{{\color[RGB]{255,246,213} \rule{10pt}{8pt}}} not aggregated \quad
		$\T_{h,\ag}^2$:
		\fbox{{\color[RGB]{255,179,128} \rule{10pt}{8pt}}} agg. \enskip
		\fbox{{\color[RGB]{255,230,213} \rule{10pt}{8pt}}} not agg. \quad
		$\T_{h,\ag}^3$:
		\fbox{{\color[RGB]{229,255,128} \rule{10pt}{8pt}}} agg. \enskip
		\fbox{{\color[RGB]{246,255,213} \rule{10pt}{8pt}}} not agg. \quad
		\fbox{{\color{white} \rule{10pt}{8pt}}} $\T_{h,\E}^i$
	\end{subfigure} \\ \vspace{0.1cm}
	\begin{subfigure}{0.105\textwidth}
		\centering
		\includegraphics[width=\textwidth]{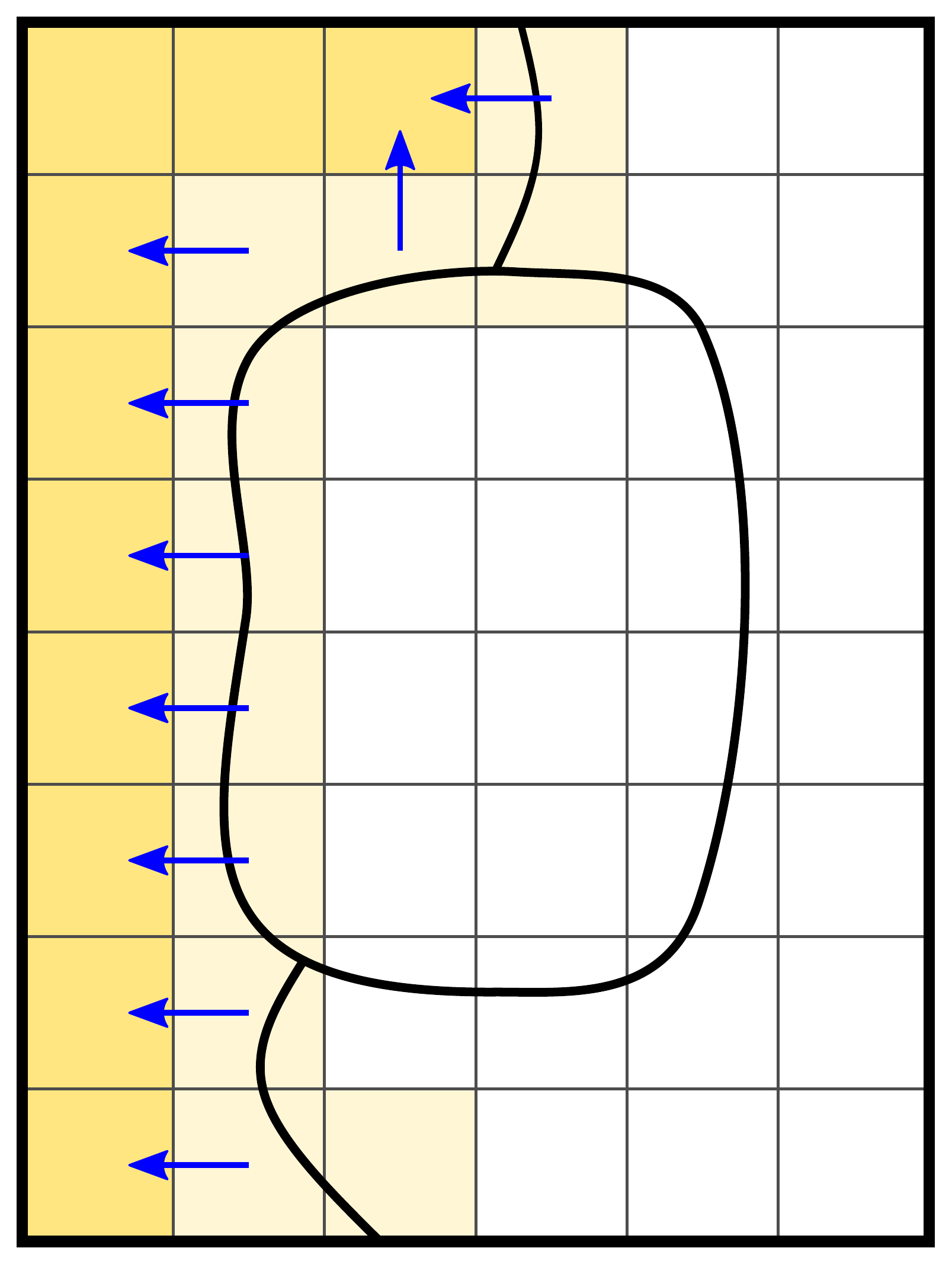}
		\caption{Step 1}
	\end{subfigure}
	\begin{subfigure}{0.105\textwidth}
		\centering
		\includegraphics[width=\textwidth]{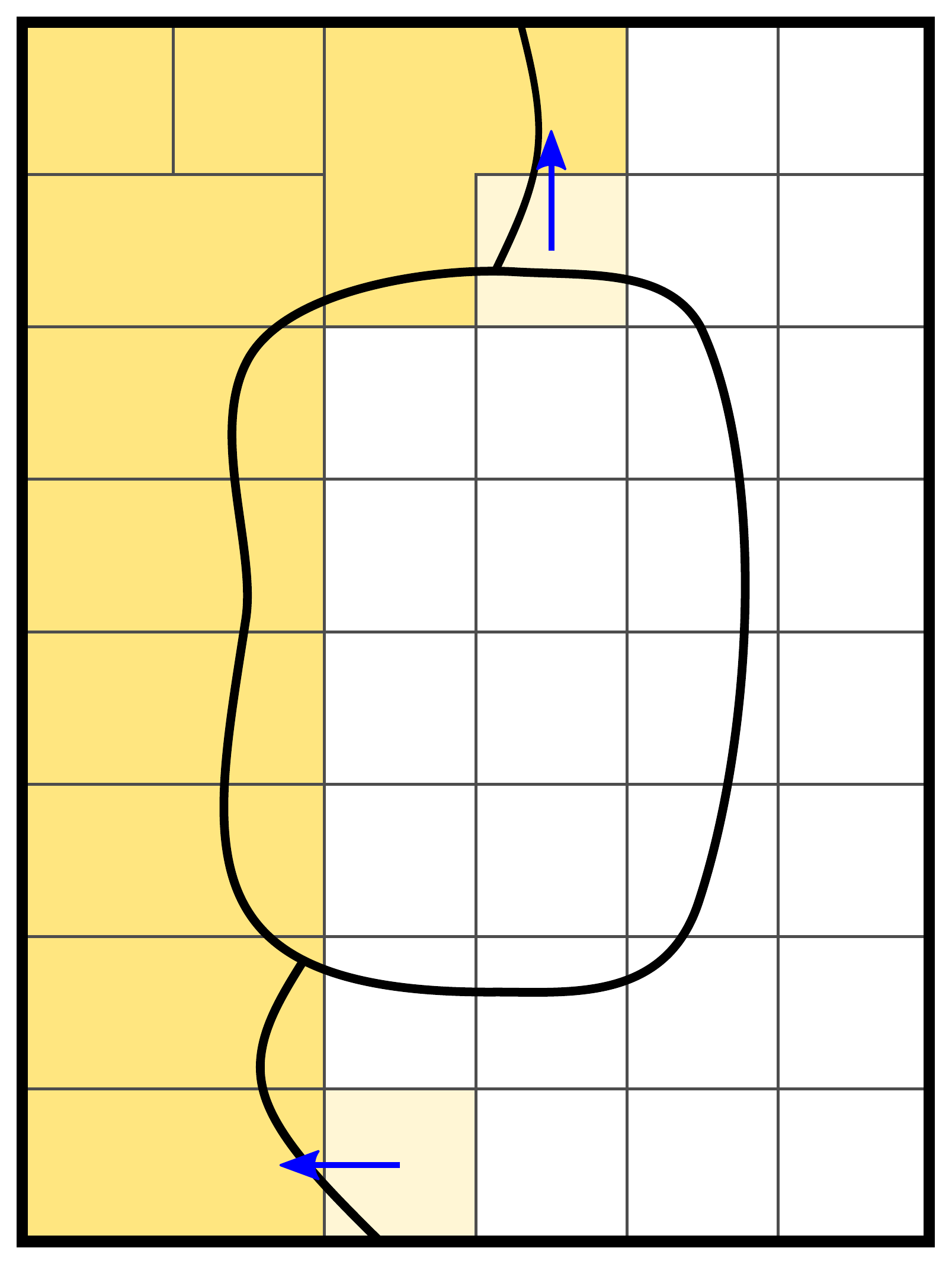}
		\caption{Step 2}
	\end{subfigure}
	\begin{subfigure}{0.105\textwidth}
		\centering
		\includegraphics[width=\textwidth]{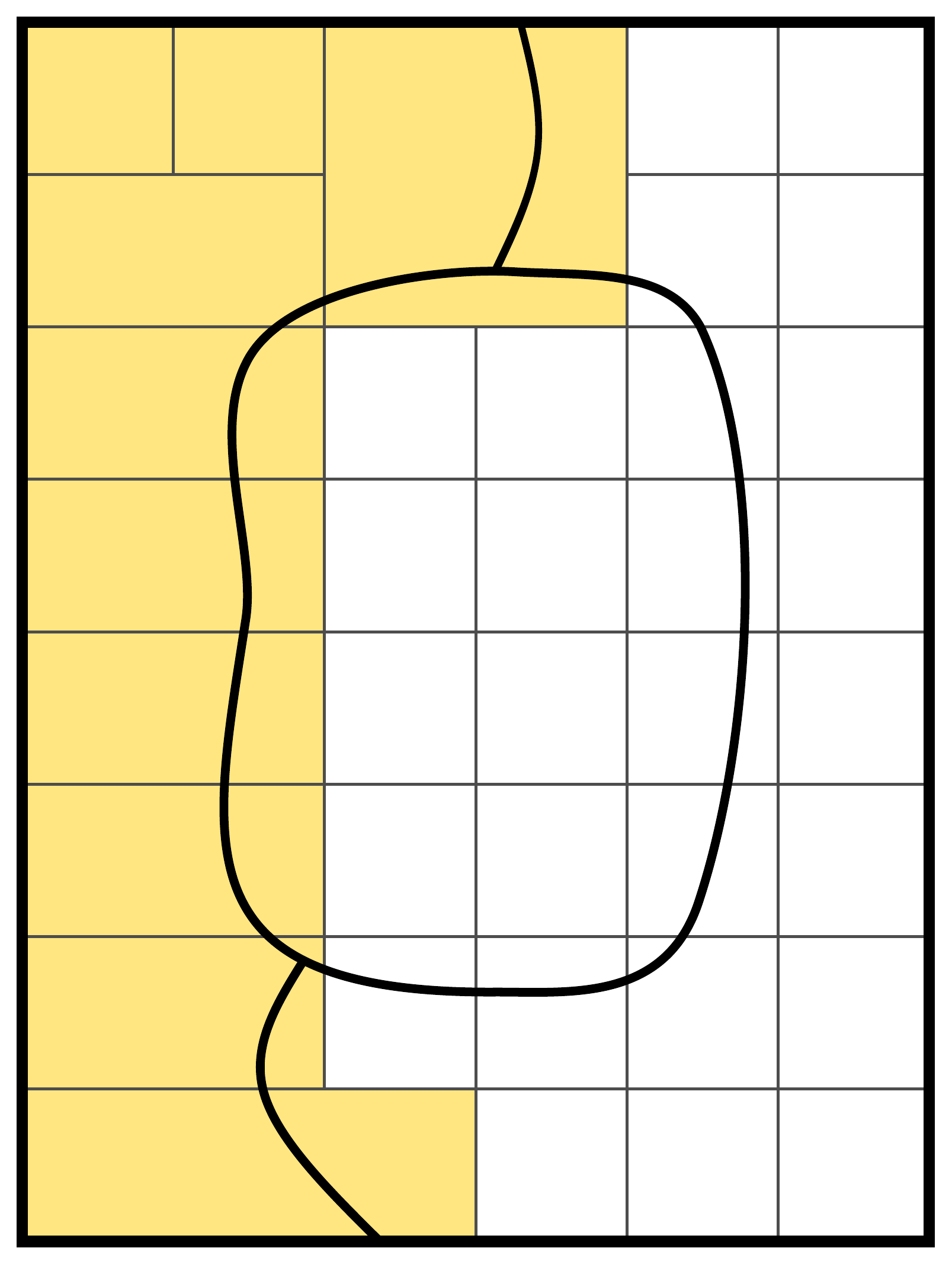}
		\caption{Step 3}
	\end{subfigure}
	\begin{subfigure}{0.105\textwidth}
		\centering
		\includegraphics[width=\textwidth]{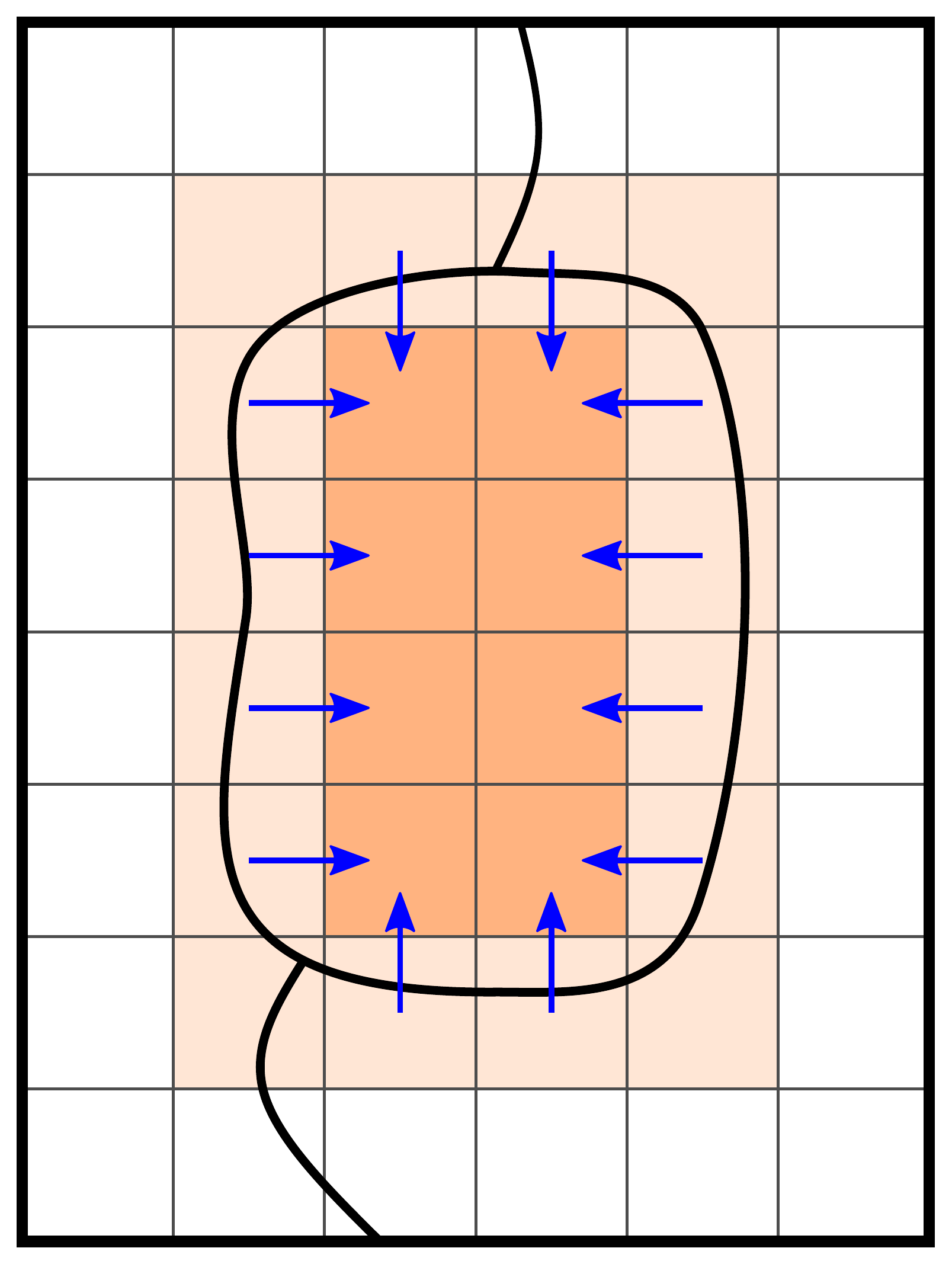}
		\caption{Step 1}
	\end{subfigure}
	\begin{subfigure}{0.105\textwidth}
		\centering
		\includegraphics[width=\textwidth]{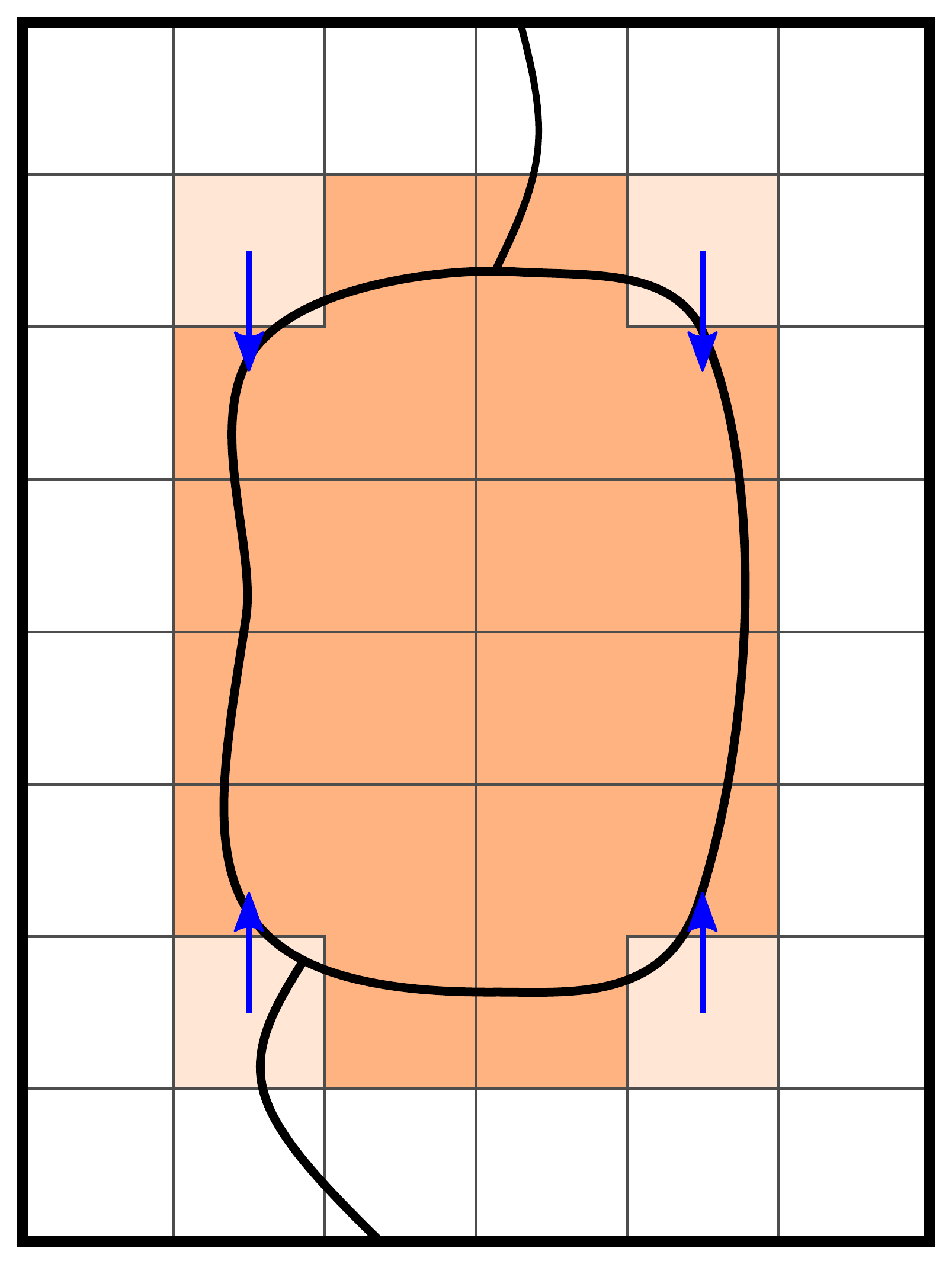}
		\caption{Step 2}
	\end{subfigure}
	\begin{subfigure}{0.105\textwidth}
		\centering
		\includegraphics[width=\textwidth]{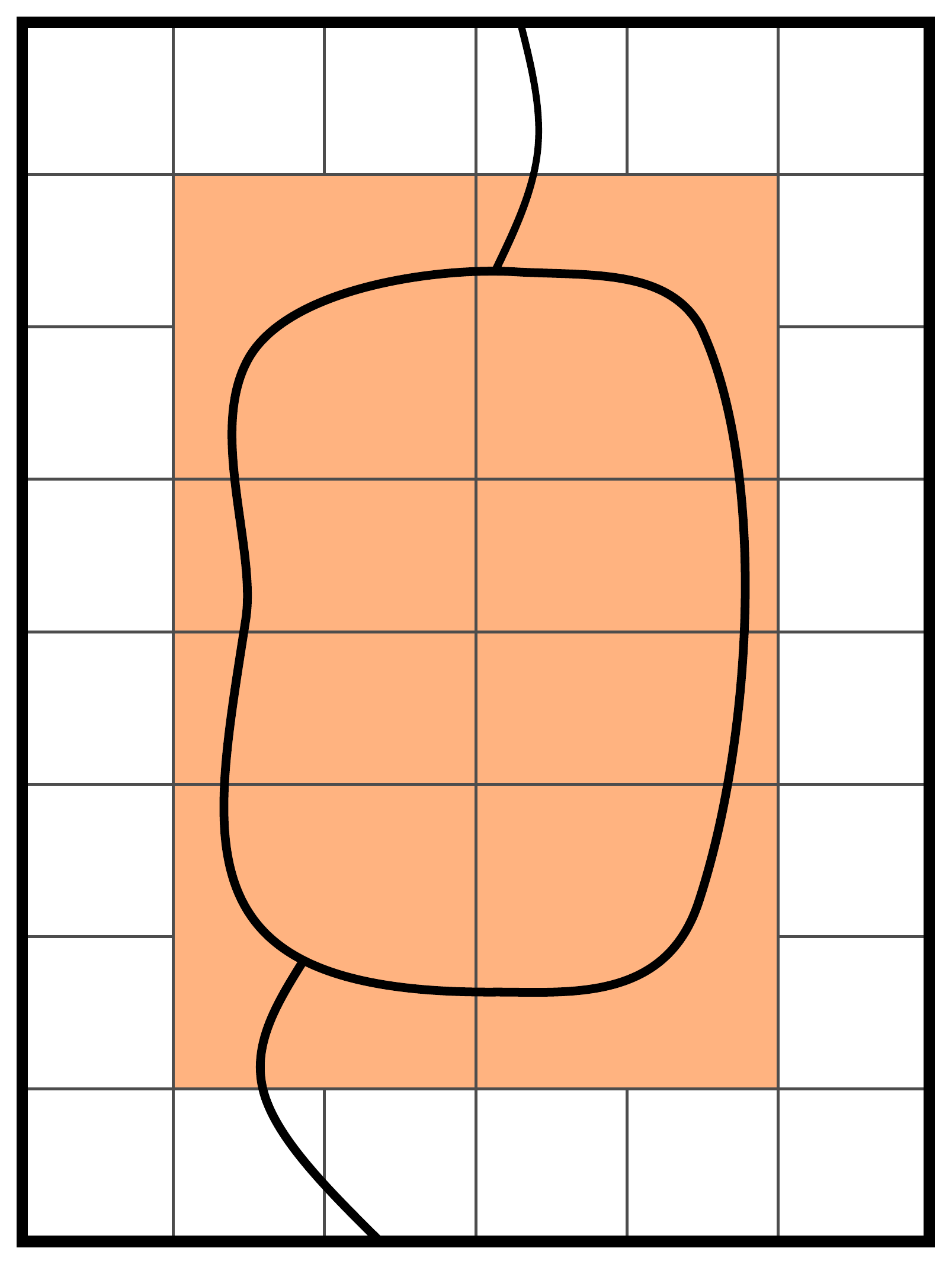}
		\caption{Step 3}
	\end{subfigure}
	\begin{subfigure}{0.105\textwidth}
		\centering
		\includegraphics[width=\textwidth]{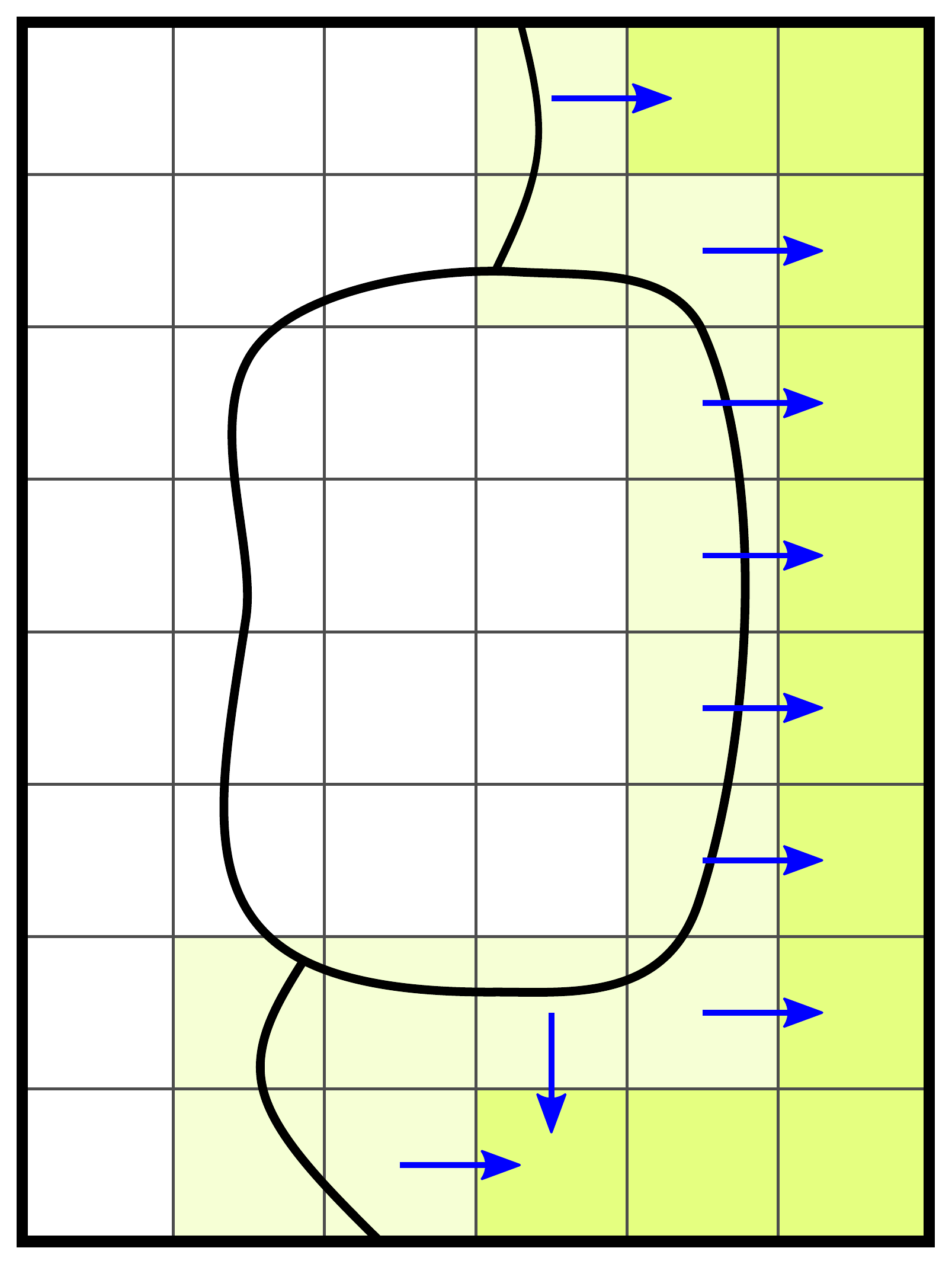}
		\caption{Step 1}
	\end{subfigure}
	\begin{subfigure}{0.105\textwidth}
		\centering
		\includegraphics[width=\textwidth]{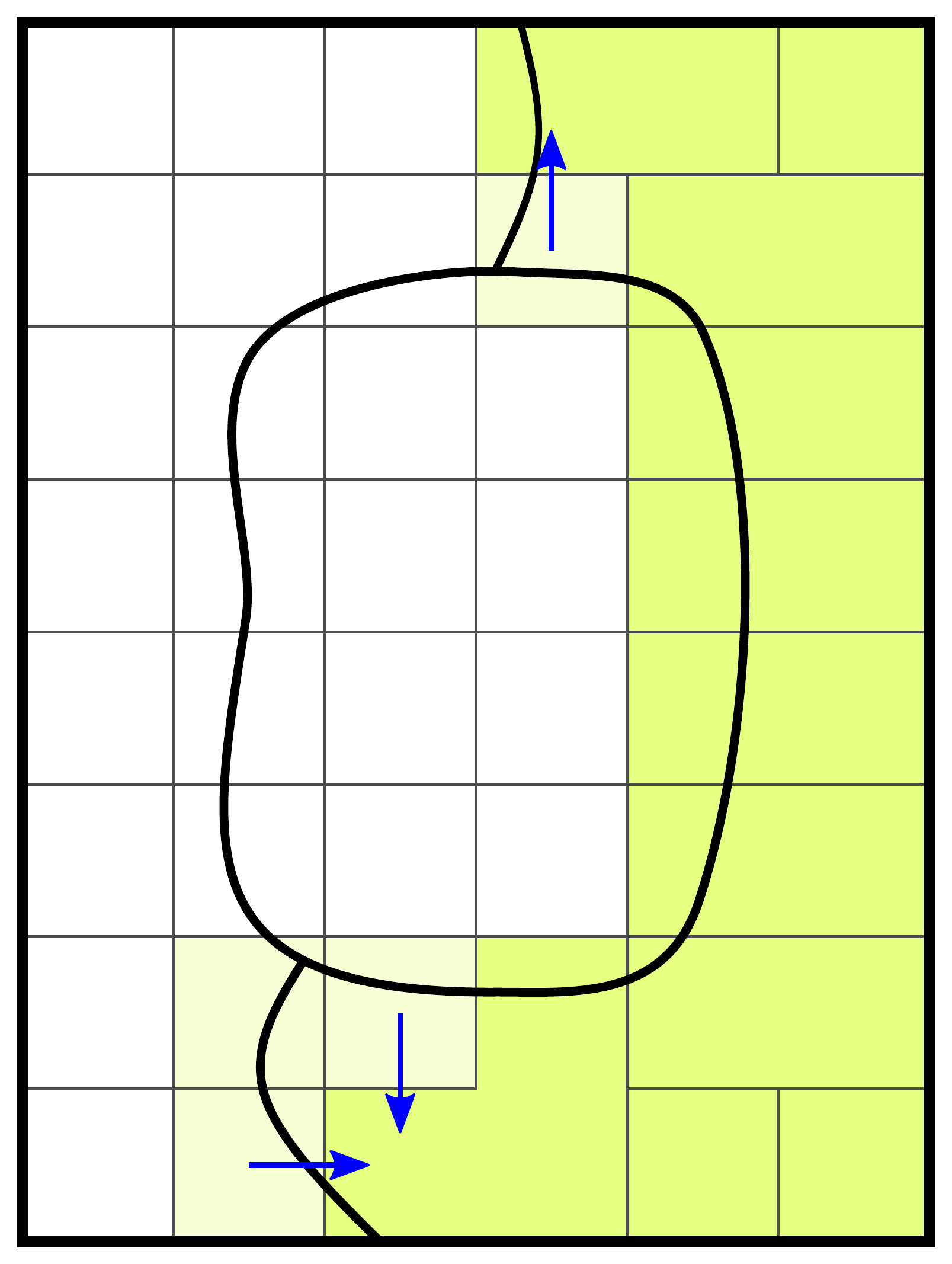}
		\caption{Step 2}
	\end{subfigure}
	\begin{subfigure}{0.105\textwidth}
		\centering
		\includegraphics[width=\textwidth]{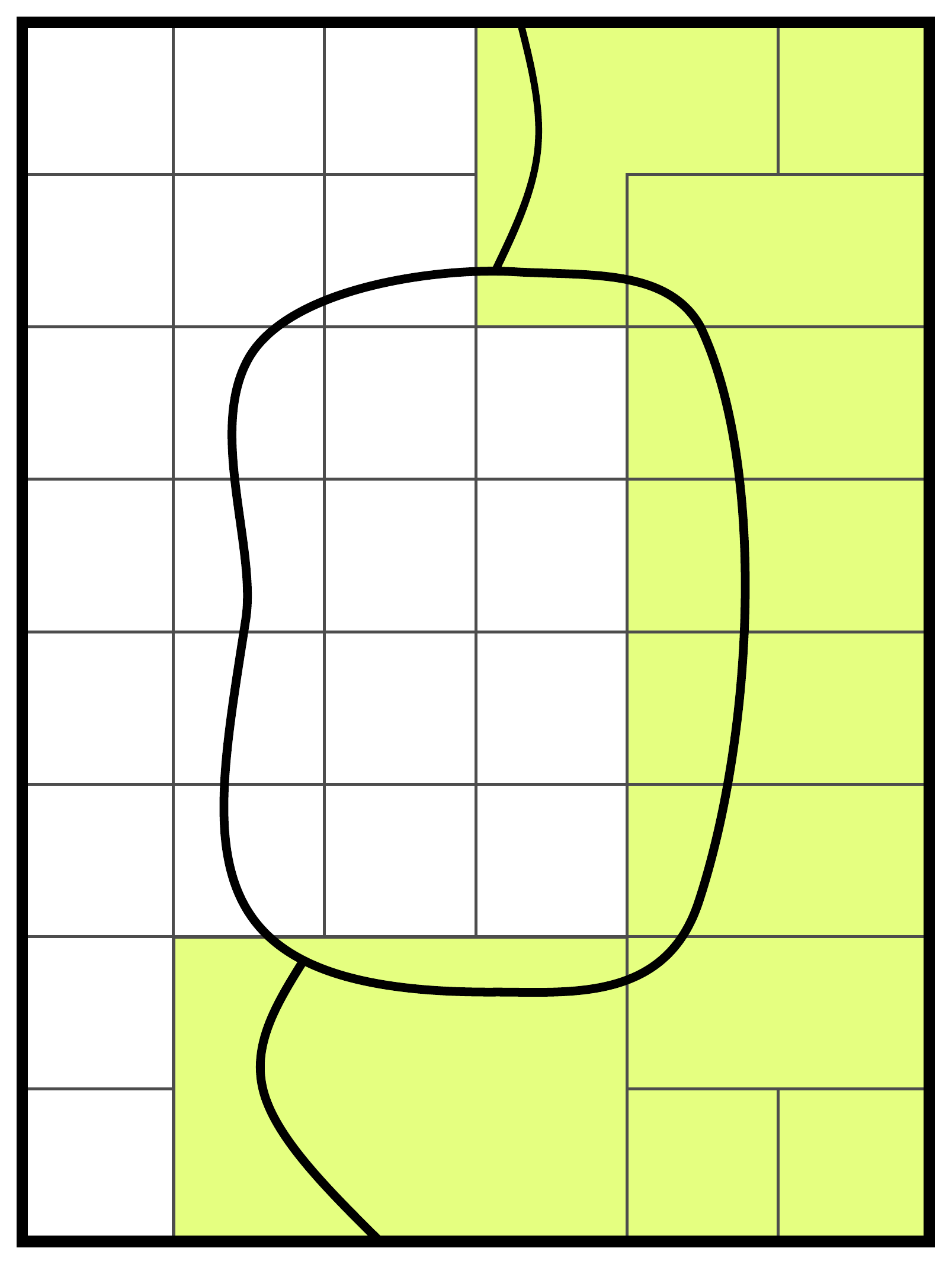}
		\caption{Step 3}
	\end{subfigure}
	\caption[Cell aggregation on the three active meshes $\T_{h,\act}^i$, $i =
	1,2,3$, of Figure~\ref{fig:emb-int}]{Cell aggregation on the three active
	meshes $\T_{h,\act}^i$, $i = 1,2,3$, of Figure~\ref{fig:emb-int}. The
	algorithm is detailed in, e.g.~\cite[Algorithm 2.2]{inpreparation2020}.
	First, it marks well-posed cells as individual aggregates (Step 1). Then,
	aggregates grow iteratively, by attaching adjacent ill-posed cells to them
	(Step 2). The procedure stops when $\T_{h,\act}^i$, $i = 1,2,3$ is covered
	by aggregates (Step 3). This operation gives the root cell map $\R^i$, $i =
	1,2,3$. We observe that the scheme runs independently on each mesh with the
	local information provided by $\T_{h,\act}^i$, $i = 1,2,3$. Hence,
	implementation can reuse a single-domain cell aggregation scheme.}
	\label{fig:agg-int}
\end{figure}


\subsection{Aggregated Lagrangian finite element spaces}
\label{sec:ag-fe-space}

As stated in Section~\ref{sec:intro}, we consider the common
approach~\cite{hansbo2002unfitted,burman_cutfem_2015,annavarapu2012robust} of
building \ac{fe} spaces on top of interface-overlapping meshes; it leads to
\ac{fe} approximations that have a Cartesian product structure. In our case, we
aim to construct a \ac{cg} Ag\ac{fe} space on top of the aggregated overlapping
mesh $\{ \T_{h,\ag}^i \}_{i=1}^N$. We will see that we can straightforwardly
exploit the single-domain methodology in~\cite{Badia2018} to derive an Ag\ac{fe}
space on each aggregated mesh $\T_{h,\ag}^i$ and, from here, a global \ac{fe}
space in $\T_h$ of the form $\V_{h,\ag}^1 \times \ldots \times \V_{h,\ag}^N$.

For the sake of simplicity, we assume that the \ac{pde} problem posed in
$\Omega$ is such that there is a single scalar-valued field associated to each
subdomain $\Omega^i$. We also assume discretisations with Lagrangian \acp{fe}.
In any case, the exposition can be generalised to other \acp{fe},
e.g.~Nédélec~\cite{Olm2018}, vector/tensor fields and multiple fields per
$\Omega^i$. We also consider same cell topology everywhere in $\T_h$ and $\T_h$
conforming; although Ag\ac{fe} spaces on top of nonconforming meshes are fully
covered in~\cite{inpreparation2020} and numerical tests in
Section~\ref{sec:num-exps} run on Cartesian tree-based (nonconforming)
meshes~\cite{Badia2019b}. Lastly, we omit treatment of strong Dirichlet boundary
conditions in the discussion below, although they can be easily taken care of,
using standard approaches.

We denote by $\V(\cell)$ a vector space of functions defined on $\cell \in \T_h$.
For $d$-simplex meshes, we define the local space $\V(\cell) \doteq
\mathcal{P}_q(\cell)$, i.e.~the space of polynomials of order less or equal to
$q$ in the variables $x_1, \ldots, x_d$. For $d$-cubes, we define $\V(\cell)
\doteq \mathcal{Q}_q(\cell)$, i.e.~the space of polynomials that are of degree
less or equal to $q$ with respect to each variable in $x_1, \ldots, x_d$. In the
numerical examples, we limit ourselves to rectangular or hexahedral cells and
linear or quadratic shape functions, i.e.~$\V(\cell) \doteq
\mathcal{Q}_1(\cell)$ or $\V(\cell) \doteq \mathcal{Q}_2(\cell)$. To simplify
notation, we define the elemental functional spaces $\V(\cell)$ in the physical
cell $\cell \subset \Omega$ (even though our computer implementation relies on
reference parametric spaces, as usual). Since we take on Lagrangian \acp{fe},
the basis for $\V(\cell)$ is the Lagrangian basis (of order $q$) on $\cell$; we
assume same order everywhere in $\T_h$. We denote by $\Sigma_\cell$ the set of
Lagrangian nodes of order $q$ of cell $\cell$, i.e.~the set of local \acp{dof}
in $\cell$. There is a one-to-one mapping between nodes $\sigma \in
\Sigma_\cell$ and shape functions $\phi^\sigma_\cell(\x)$ such that
$\phi_\cell^\sigma (\x^{\sigma'}) = \delta_{\sigma\sigma'}$, where
$\x^{\sigma'}$ are the space coordinates of node $\sigma'$ and $\delta$ is the
Kronecker delta.

Since we seek a global aggregated \ac{fe} space of the form $\V_{h,\ag}^1 \times
\ldots \times \V_{h,\ag}^N$, we start by defining the subdomain members
$\V_{h,\ag}^i$, $i = 1, \ldots, N$. Thus, all notation and definitions in the
next paragraphs are subdomain-local, i.e.~referred to any subdomain $\Omega^i
\subset \Omega$, $i = 1,\ldots,N$, unless  stated otherwise. According to this,
let $\Sigma_\act^i$ refer to the set of (subdomain-)active \acp{dof} of
$\T_{h,\act}^i$. We introduce next a local-to-subdomain \ac{dof} map $\sigma^i
(\cell,\sigma') \in \Sigma_\act^i$, with $\sigma' \in \Sigma_\cell$ and $\cell
\in \T_h$. In \ac{cg} methods, $\sigma^i$ is obtained by gluing together \acp{dof}
located in the same geometrical position; this operation leads to
$\C^0$-continuous approximations. With this notation, we can define a standard
\ac{fe} space in $\T_{h,\act}^i$ of the form
\[
	\V_{h,\act}^i \doteq \{ v^i \in \mathcal{C}^0(\Omega_\act^i) : 
	\restrict{v^i}{\cell} \in \V(\cell), \ \forall \ \cell \in \T_{h,\act}^i \}.
\]
It is well-known that, when the discrete \ac{fe} problem is \emph{only}
integrated in $\Omega^i$, \emph{direct} usage of $\V_{h,\act}^i$ leads to
arbitrarily ill-conditioned linear systems~\cite{DePrenter2017}. To solve this
issue, we resort to the aggregated \ac{fem}~\cite{Badia2018,Badia2018a}. The
main idea is to remove from $\V_{h,\act}^i$ problematic \acp{dof}, associated
with small cut cells, by constraining them as a linear combination of \acp{dof}
with local support in a (well-posed) cell of $\T_{h,\W}^i$. It leads to the
aggregated subspace of $\V_{h,\act}^i$, namely $\V_{h,\ag}^i$, that gets rid of
the aforementioned ill-conditioning issues.

In order to define $\V_{h,\ag}^i$, the key is to realise that our context is
analogous to one considering a single-domain unfitted-boundary problem, taking
$\Omega^i$ as the physical domain embedded in $\Omega$. The former case is
extensively covered in~\cite{Badia2018}. Hence, we can follow the same steps to
derive $\V_{h,\ag}^i$. According to this, let us define the set of well-posed
\acp{dof} as $\Sigma_\W^i \doteq \bigcup_{\cell \in \T_{h,\W}^i} \Sigma_\cell$
and the set of ill-posed \acp{dof} as $\Sigma_\I^i \doteq \Sigma_\act^i
\setminus \Sigma_\W^i$, see Figure~\ref{fig:emb-int}. Obviously, $\{
\Sigma_\W^i, \Sigma_\I^i \}$ forms a partition of $\Sigma_\act^i$. $\Sigma_\W^i$
gathers all \acp{dof} that have local support in (well-posed) cells of
$\T_{h,\W}^i$, while $\Sigma_\I^i$ isolates all \acp{dof}, that potentially have
arbitrarily small compact support and must be constrained in terms of well-posed
\acp{dof} of $\Sigma_\W^i$.

To compute ill-posed \ac{dof} constraints, we proceed as usual in Ag\ac{fe}
methods. First, we compose the root cell map $\R^i : \T_{h,\act}^i \to
\T_{h,\W}^i$ of Section~\ref{sec:interface-ag}, with a map between ill-posed
\acp{dof} $\Sigma_\I^i$ and ill-posed cells $\T_{h,\I}^i$. Specifically, we
assign each ill-posed \ac{dof} to one of its surrounding ill-posed cells. The
chosen cell is then mapped onto a well-posed cell via $\R^i$. Thus, the outcome
of this composition is a map $\mathcal{K}^i : \Sigma_\I^i \to \T_{h,\W}^i$, that
assigns an ill-posed \ac{dof} to a well-posed cell via cell aggregation; see
formal definitions in, e.g.~\cite{Verdugo2019,Badia2018}. Following this, given
$v^i \in \V_{h,\act}^i$ and $\sigma \in \Sigma_\I^i$, we linearly extrapolate the
nodal value of $\sigma$, namely $v_\sigma^i \in \mathbb{R}$, with the values at
the local \acp{dof} of its root cell $\mathcal{K}^i(\sigma)$. It leads to the
constraint (see Figure~\ref{fig:agfem_constr})
\begin{equation}
	v_\sigma^i = \sum_{\sigma' \in \Sigma_{\mathcal{K}^i(\sigma)}}
	C_{\sigma\sigma'} v_{\sigma'}^i, \quad \text{with} \
	C_{\sigma\sigma'} \doteq \phi_{\mathcal{K}^i(\sigma)}^{\sigma'}
	(\x^{\sigma}).
	\label{eq:agfem-constraint}
\end{equation}
As a result, the Ag\ac{fe} space can be readily defined as
\[
	\V_{h,\ag}^i \doteq \{ v^i \in \V_{h,\act}^i : v_\sigma^i = \sum_{\sigma' 
	\in \Sigma_{\mathcal{K}^i(\sigma)}} C_{\sigma\sigma'} v_{\sigma'}^i, 
	\enspace \forall \sigma \in \Sigma_\I^i \}.
\]
It is clear that $\V_{h,\ag}^i \subset \V_{h,\act}^i$. Further details, such
as the form of (subdomain-wise) shape functions of $\V_{h,\act}^i$, are not
covered here, as they are analogous to those in~\cite{Badia2018}.

\begin{figure}[!h]
	\centering
	\includegraphics[width=0.2\textwidth,trim = 0.0cm 0.75cm 0.0cm 0.0cm, clip = true]{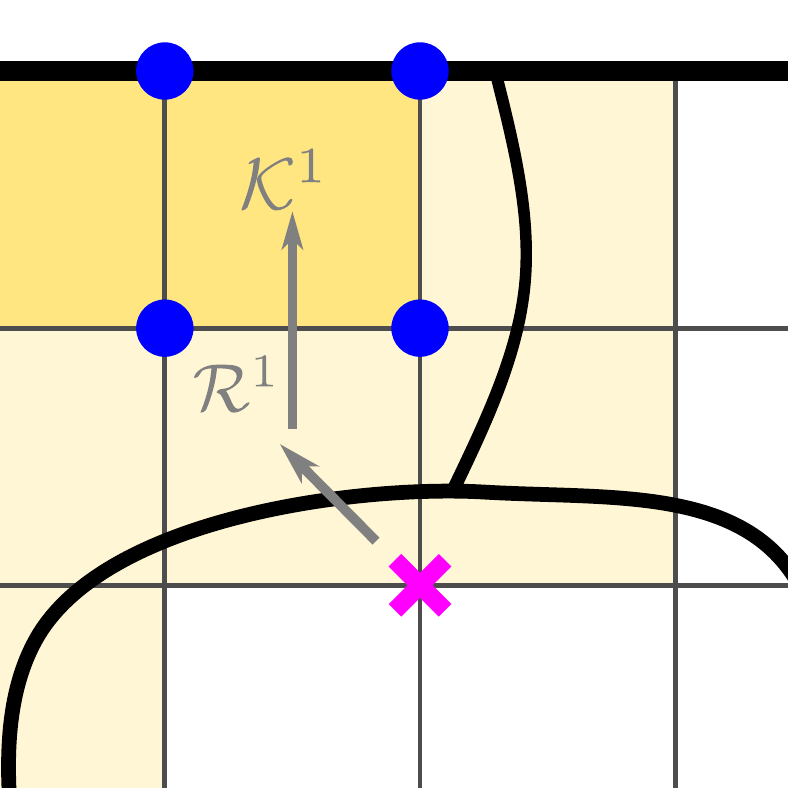}
	\vspace{-0.15cm}
	\caption[Close-up of Figure~\ref{fig:emb-int-b} illustrating an ill-posed
	\ac{dof} mapped to a well-posed cell]{Close-up of Figure~\ref{fig:emb-int-b}
	illustrating an ill-posed \ac{dof} ({\color[RGB]{255,0,255}
	$\boldsymbol{\times}$}) in $\T_{h,\act}^1$ mapped to a well-posed cell via
	$\mathcal{K}^1$. The resulting constraining \acp{dof},
	i.e.~$\Sigma_{\mathcal{K}^i}$, are marked with {\Large
	\color{blue} $\bullet$}.}
	\label{fig:agfem_constr}
\end{figure}

After defining independent Ag\ac{fe} spaces in $\Omega^i$, $i = 1, \ldots, N$, a
\emph{global} aggregated \ac{fe} space $\V_{h,\ag}$ is straightforwardly derived
as the Cartesian product of subdomain counterparts, i.e.~$\V_{h,\ag} \doteq
\V_{h,\ag}^1 \times \ldots \times \V_{h,\ag}^N$. We remark that, as
$\T_{h,\act}^i$ overlaps in cells cutting the skeleton $\Gamma_0$, \acp{dof}
lying on a cut cell are mapped to as many different global \acp{dof}, as active
meshes overlapping the cell, via the local-to-subdomain \ac{dof} map $\sigma^i$.
However, some replicated \acp{dof} may be marked as ill-posed and become
constrained. As a result, they do not increase the size of the linear
system.\footnote{In this sense, Ag\ac{fem} departs from other unfitted
techniques that rely on the same interface-overlapping mesh approach, such as
cutFEM. In those cases, the problem is incremented by the number of replicated
\acp{dof}. In particular, the total number of (free) \acp{dof} is $\sum_{i=1,N} 
\left| \Sigma_\act^i \right|$. In contrast, the size of the linear system in
Ag\ac{fem} is always smaller and \emph{bounded above} by $\sum_{i=1,N} \left|
\Sigma_\act^i \right|$; indeed, the total number of \acp{dof} is regulated by
the well-posedness threshold $\eta_0$. For $\eta_0$ equal to zero, we would
exactly have $\sum_{i=1,N} \left| \Sigma_\act^i \right|$, but this is the
standard XFEM case, which is useless because it does not get rid of the small
cut cell problem. The larger $\eta_0$ is, the more cells are marked as ill-posed
and thus the number of \acp{dof} reduced, because more \acp{dof} are constrained
and do not appear in the (reduced) linear system. In the aggregation process,
replicated \acp{dof} on the interface cells are eliminated and one can easily
end up with a problem even smaller than the original \ac{fe} problem. In any
case, the interface region usually demands more refined meshes due to small
scale local effects. This is accomplished by combining Ag\ac{fem} with adaptive
mesh refinement and coarsening (see~\cite{inpreparation2020}).}

\section{Approximation of unfitted interface elliptic problems}
\label{sec:approx}

In this section, we address the approximation of compressible linear elasticity
problems with the Ag\ac{fem}. \myadded{Extension of the method below to truly
incompressible materials can be carried out with the mixed Ag\ac{fem}
in~\cite{Badia2018a}.} We introduce first the continuous interface
problem~\eqref{eq:strong_form} and prove that the weak
formulation~\eqref{eq:weak_form1}-\eqref{eq:weak_form2} is well-posed. Afterwards,
we consider a consistent Nitsche's method~\eqref{eq:discrete_form} to discretise
the problem with Ag\ac{fem}. We conclude by examining well-posedness and
approximability properties of the discrete problem~\eqref{eq:discrete_form}, which
lead to optimal \emph{a priori} error estimates \myadded{independent of the cut
configuration}.

From this point onwards, we restrict ourselves to single interface problems with
two subdomains, i.e.~there is a unique physical interface $\Gamma_0 \equiv
\Gamma^{12}$; henceforth denoted simply by $\Gamma$. This assumption contributes
to conciseness and readability; all concepts presented here can be easily
extended to the general case with an arbitrary number of subdomains. For the
sake of the numerical analysis, let $\Gamma$ be a smooth manifold with bounded
curvature. To distinguish the two subdomains, we use superscripts $+,-$ instead
of $1,2$, e.g.~the subdomains are denoted by $\Omega^+$ and $\Omega^-$. In
addition, we employ superscript $\alpha \in \{+,-\}$ to refer to any of the
subdomains and $\pm$ to refer to the broken domain, i.e.~$\Omega^\pm \doteq
\Omega^+ \cup \Omega^-$.

Before describing the model problem and approximation, we introduce some
additional notation. Let $\v$ be a smooth enough vector or tensor function defined
in $\Omega$. We denote by $\v^\alpha \doteq \restrict{\v}{\Omega^\alpha}$ the
restriction of $\v$ into $\Omega^\alpha$; conversely, given $\v^\alpha$ defined in
$\Omega^\alpha$, we identify the pair $\{\v^+,\v^-\}$ with the function $\v$ in
$\Omega^\pm$, that is equal to $\v^\alpha$ in $\Omega^\alpha$. On the interface,
we define $\restrict{\v^+}{\Gamma}(\x) = \lim_{\epsilon\to0^{+}}
\v(\x-\epsilon\n^{+})$ and $\restrict{\v^-}{\Gamma}(\x) = \lim_{\epsilon\to0^{-}}
\v(\x+\epsilon\n^{-})$, \myadded{where $\n^\alpha$ is the \emph{outward pointing}
normal to $\Omega^\alpha$}. We define the \emph{jump} of $\v$ across $\Gamma$ by
$\jump{\v} \doteq \restrict{\v^+}{\Gamma} - \restrict{\v^-}{\Gamma}$ and the
\emph{weighted average} of $\v$ on $\Gamma$ as $\average{\v} \doteq w^+
\restrict{\v^+}{\Gamma} + w^- \restrict{\v^-}{\Gamma}$, with $0 \leq w^\alpha \leq
1$ and $w^+ + w^- = 1$.

On the other hand, we use standard notation for Sobolev spaces (see,
e.g.~\cite{Toselli2005}). For instance, the $L^2(\omega)$ norm is denoted by
$\norm{\cdot}{L^2(\omega)}{}$, the $H^1(\omega)$ norm as
$\norm{\cdot}{H^1(\omega)}{}$ and the $H^1(\omega)$ seminorm as
$\seminorm{\cdot}{H^1(\omega)}{}$. Given the two disjoint open connected
subdomains $\Omega^+,\Omega^- \subset \mathbb{R}^d$, the Sobolev spaces of the
form $H^s(\Omega^+) \times H^s(\Omega^-)$ are represented with $H^s(\Omega^\pm)$,
endowed with the norm $\norm{\cdot}{H^s(\Omega^{\pm})}{} \doteq
(\norm{\cdot}{H^s(\Omega^+)}{2} + \norm{\cdot}{H^s(\Omega^-)}{2})^{1/2}$;
analogously for seminorms. Vector-valued Sobolev spaces are represented with
boldface letters. \myadded{We use common notation $A \lesssim B$ or $A \gtrsim B$
to denote that $A \leq CB$ or $A \geq CB$ for some positive constant $C$.} In this
work, constants may depend on the order of the \ac{fe} space, \myadded{the shape
and size of $\Omega$ and $\Gamma$,} and the user-defined value $\eta_{0}$, but
they may not depend on the mesh-interface intersection (i.e.~how the cells are
intersected), the mesh size of the background mesh, or the contrast of the
physical parameters at both sides of the interface.

Moreover, let us assume that the aggregate size is bounded by a constant times
$h_{T}$, where $T$ is the root of the aggregate. This can be shown to hold when
assuming that the ratio between the size of two neighbouring cells cannot be
arbitrarily large, e.g.~using standard 2:1 balance in adaptive non-conforming
tree meshes or a patch-local quasi-regularity assumption on unstructured meshes
(see also~\cite[Lemma 2.2]{Badia2018}). 

Lastly, we introduce the set of faces $\mathcal{F}_{h}$ that are generated after
the intersection of $\Gamma$ and the mesh $\mathcal{T}_{h}$,
i.e.~$\mathcal{F}_{h} \doteq \left\{\bigcup_{\cell \in \T_h} \Gamma \cap \cell
\right\} \cup \left\{\bigcup_{\cell,\cell' \in \T_h \ : \ \cell \neq \cell'}
\Gamma \cap \left( \overline{\cell} \cap \overline{\cell'} \right) \right\}$; a
face $F$ in $\mathcal{F}_{h}$ can be on the boundary of the background mesh
cells or intersect the cells. In the subsequent analysis, there is no difference
between the two cases and, thus, we do not distinguish among them. Given $F \in
\mathcal{F}_h$, we let $\cell_F^\alpha \in \T_{h,\act}^\alpha$ such that $F \cap
\overline{\Omega^\alpha} \subset \overline{\cell_F^\alpha}$ and $h_{\cell_F}
\doteq \max \{h_{\cell_F^+}, h_{\cell_F^-} \}$. Note that $\cell_F^+ \equiv
\cell_F^-$ for faces that intersect the cells.

Our main goal is to prove that all constants being used in the analysis are
independent of $h$ and the cell-interface intersection. They may depend, though,
on the well-posedness threshold $\eta_0$, \myadded{the shape and size of $\Omega$
and $\Gamma$}, and the order of the \ac{fe} approximation. The key strategy in the
analysis, in order to prove robustness w.r.t.~the small cut cell problem, is to build
upon well-behaved properties, that enjoy Ag\ac{fe} spaces in \acp{bvp} posed on
unfitted boundaries, i.e.~where $\partial \Omega$ is unfitted, instead of
$\Gamma$; these properties have been thoroughly covered
in~\cite{Badia2018,Badia2018a}. We will often refer to them, without repeating
details, to keep the presentation short.

Besides, we also aim to gain some control on the robustness of
method~\eqref{eq:discrete_form} to material contrast. Since we rule out
incompressibility, we adopt the quotient of $\mu$ coefficients at either sides
of $\Gamma$ as the measure of material contrast, i.e.~we consider $\mu_+/\mu_-$
in the numerical experiments. Therefore, we can follow the usual approach for
the Laplacian problem, adopted in body-fitted \ac{dg}~\cite{codina2013design}
and small-cut-stable unfitted~\cite{burman_cutfem_2015} methods. In particular,
we employ the so-called \emph{harmonic} average weights, that is $w_+ \doteq
\frac{\mu_-}{\mu_+ + \mu_-}$ and $w_- \doteq \frac{\mu_+}{\mu_+ + \mu_-}$.
Clearly, $w_\alpha$, $\alpha \in \{+,-\}$, does not depend on cut location, only
on material contrast. We will denote the harmonic average of $\mu$ by
$\overline{\mu} \doteq \frac{2 \mu_+ \mu_-}{\mu_+ + \mu_-}$. We have that
$\mu_{\min} \leq \overline{\mu} \leq \mu_{\max}$ and $\overline{\mu} \leq 2
\mu_{\min}$.

\subsection{Model problem:}
\label{sec:model-problem}

We consider the linear isotropic elasticity problem with discontinuous Lamé
parameters across $\Gamma$, even though the following discussion and analysis can
also be particularised to the Poisson equation, or any other elliptic problem with
$H^1$-stability. We adopt a pure-displacement (irreducible)
model~\cite{ern2013theory}. For simplicity, we assume homogeneous Dirichlet
boundary conditions on $\partial \Omega$, although non-homogeneous Dirichlet or
Neumann boundary conditions can be considered too, using standard arguments. We
also assume non-homogeneous (immersed) interface transmission conditions.
According to this, \myadded{the model
problem~\cite{becker2009nitsche,burman2017deriving}} seeks to find the
displacement field $\u : \brokendomain \to \mathbb{R}^d$ such that
\begin{equation}
	\left\{
	\begin{array}{ll}
		-\bsnabla \cdot \s(\u) = \f  & \text{in} \ \brokendomain, \\
		\u = 0 & \text{on} \ \partial \Omega, \\
		\jump{\u} = \j_\Gamma & \text{on} \ \Gamma, \text{ and} \\
		\jump{\s(\u)} \cdot \n^+ = \g_\Gamma & \text{on} \ \Gamma, \\
	\end{array}
	\right.
	\label{eq:strong_form} 
\end{equation}
where $\eps,\s : \brokendomain \to \mathbb{R}^{d,d}$ are the strain tensor
$\eps(\u) \doteq \frac{1}{2} (\bsnabla \u + {\bsnabla \u}^T)$ and stress tensor
$\s(\u) = 2 \mu \eps(\u) + \lambda \mathrm{tr} (\eps(\u)) \mathbf{Id}$; where
$\mathbf{Id}$ denotes the identity matrix in $\mathbb{R}^\d$. Apart from that,
we let $\f \in \L^{2}(\Omega)$ represent the body forces, whereas $\j_\Gamma$
and $\g_\Gamma$ denote the fixed jump and forcing terms on $\Gamma$. We assume
that $\j_\Gamma \in \H_{00}^{1/2}(\Gamma)$ and $\g_\Gamma \in \H^{1/2}(\Gamma)$.
We recall that $\H_{00}^{1/2}(\Gamma)$ is the subspace of functions in
$\H^{1/2}(\Gamma)$, whose extension by zero on $\partial \Omega$ is in
$\H^{1/2}(\partial \Omega \cup \Gamma)$~\cite[Appendix A.2]{Toselli2005}. Since
$\j_{\Gamma} \in \H_{00}^{1/2}(\Gamma)$, its extension by zero to $\partial
\Omega^{\alpha}$, $\alpha \in \{+,-\}$, is bounded in $\H^{1/2}(\partial
\Omega^\alpha)$, which we represent with $\j_{\partial \Omega^\alpha}$.

We assume the Lamé coefficients to be subdomain constant, i.e.~$\lambda(\x)
\doteq \lambda_\alpha \geq 0$ and $\mu(\x) \doteq \mu_\alpha > 0$ for $\x \in
\Omega^{\alpha}$, $\alpha \in \{ +,-\}$, but can have different values across
$\Gamma$. Furthermore, we consider the Poisson ratio $\nu_\alpha \doteq
\lambda_\alpha / ( 2 ( \lambda_\alpha + \mu_\alpha ) )$ is bounded away from
$1/2$, i.e.~the material is compressible. Since $\lambda_\alpha = 2\nu_\alpha
\mu_\alpha / (1 - 2\nu_\alpha)$, $\lambda_\alpha$ is bounded above by
$\mu_\alpha$, i.e.~$\lambda_\alpha \leq C \mu_\alpha$, $C > 0$. Combined with
the Cauchy-Schwarz inequality, it leads to the upper bound
\begin{equation}\label{eq:cont-strong-form}
  \int_{\Omega^{\alpha}}^{} \s(\u) : \eps(\v) \ \mathrm{d} \Omega  
  \lesssim \mu_\alpha \norm{\bsnabla\u}{\L^2(\Omega^{\alpha})}{}
  \norm{\bsnabla\v}{\L^2(\Omega^{\alpha})}{}, \qquad \forall \u,\v \in 
  \H^{1}(\Omega^{\alpha}). 
\end{equation}
\myadded{On the other hand, letting $\V \doteq \{ \v \in \H^1(\Omega^{\pm}) : \v =
\boldsymbol{0} \ \text{on} \ \partial \Omega \}$, we have the Korn
inequality~\cite[(1.19)]{brenner2004korn}
	\begin{equation}\label{eq:korn}
		\int_{\Omega}^{} \s(\u) : \eps(\u) \ \mathrm{d} \Omega \
		+ \sum_{F \in \mathcal{F}_h} h_{\cell_F}^{-1} \| \jump{\u} 
		\|^2_{\L^2(F)} \geq \sum_{\alpha \in \{+,-\}} 
		C_{\s} C_\Omega \mu_{\alpha} \| \bsnabla \u^\alpha 
		\|^2_{\L^2(\Omega^{\alpha})}, \qquad \forall \u \in \V,
	\end{equation}
where $C_{\Omega}>0$ is the related Korn constant.} We can now
use~\eqref{eq:cont-strong-form}\myadded{,~\eqref{eq:korn}} and the fact that
$\j_{\Gamma} \in \H_{00}^{1/2}(\Gamma)$ to show that the weak form
of~\eqref{eq:strong_form} is well-posed. To this end, we let the decomposition $\u
\doteq \w + \h_\j \in \H^1(\Omega^\pm)$, such that the weak solution
of~\eqref{eq:strong_form} becomes: \emph{find} $\u = \w + \h_\j \in \V$\emph{,
where}
\begin{align}
	\label{eq:weak_form1}
	\h_\j \in \H^1(\Omega^{+}) \ &: \ \int_{\Omega^{+}} \s(\h_\j) : \eps(\v) \ 
	\mathrm{d} \Omega = 0, \qquad \h_\j = \j_{\partial \Omega^{+}} \ \text{in} 
	\ \partial \Omega^+, \qquad \text{and} \\ 
	\w \in \H_{0}^{1}(\Omega) \ &: \ \int_{\Omega} \s(\w) : \eps(\v) \ \mathrm{d} 
	\Omega = -\int_{\Omega^{+}} \s(\h_\j) : \eps(\v) \ \mathrm{d} \Omega \ + 
	\int_{\Omega} \f \cdot \v \ \mathrm{d} \Omega \ + \int_{\Gamma} \g_\Gamma 
	\cdot \v \ \mathrm{d} \Gamma,
	\label{eq:weak_form2}
\end{align}
\emph{for all} $\v \in \H_{0}^{1}(\Omega)$.

Continuity of the bilinear form in $\H^1(\Omega^\pm)$ is a direct consequence
of~\eqref{eq:cont-strong-form}. \myadded{Since the jump term in~\eqref{eq:korn}
vanishes for $\w \in \H_{0}^{1}(\Omega)$, we can combine~\eqref{eq:korn} with the
first Poincaré-Friedrichs inequality to prove coercivity of~\eqref{eq:weak_form2}.
If we consider a continuous lifting of the Dirichlet data $\j_{\partial
\Omega^{+}}$~\cite[Remark A.42]{Toselli2005}, we can
rewrite~\eqref{eq:weak_form1}, as an homogeneous Dirichlet problem, and
apply~\eqref{eq:korn} in $\Omega^+$ (again with null jump term). As a result, we
can repeat the previous argument to show coercivity of~\eqref{eq:weak_form1} in
$\Omega^+$. Thus,} we can readily apply Lax-Milgram's lemma
on~\eqref{eq:weak_form1}, leading to $\| \h_\j \|_{\H^1(\Omega^{+})} \lesssim \|
\j_{\partial \Omega^{+}} \|_{\H^{1/2}(\partial \Omega^{+})} \lesssim \|
\j_{\Gamma} \|_{\H^{1/2}(\Gamma)}$. Finally, continuity of the right-hand side of
\eqref{eq:weak_form2} follows from the Cauchy-Schwarz inequality and a trace
theorem:
\begin{align}
  - \int_{\Omega^{+}}^{} \s(\h_\j) : \eps(\v) \ \mathrm{d} \Omega 
  &\lesssim \mu_{+}^{1/2} \| \j_{\Gamma} \|_{\H^{1/2}(\Gamma)} 
  \| \mu^{1/2} \v \|_{\H^{1}\left(\Omega^{+}\right)}, \\ \int_{\Omega}^{} 
  \f \cdot \v \ \mathrm{d} \Omega & \lesssim \| \mu^{-1/2} \f 
  \|_{\L^{2}(\Omega)} \| \mu^{1/2} \v \|_{\L^{2}(\Omega)}, \\
  \int_{\Gamma}^{} \g_{\Gamma} \cdot \v \ \mathrm{d} \Gamma & \lesssim
  \| \overline{\mu}^{-1/2} \g_{\Gamma} \|_{\L^{2}(\Gamma)} \| 
  \overline{\mu}^{1/2} \v \|_{\L^{2}(\Gamma)} \lesssim \| 
  \overline{\mu}^{-1/2} \g_{\Gamma} \|_{\H^{1/2}(\Gamma)} \| 
  \mu^{1/2} \v \|_{\H^{1}(\Omega^{\pm})}, 
  \qquad \forall \v \in \H_0^1(\Omega).
\end{align}
Combining all these results, existence and uniqueness of the weak solution
to~\eqref{eq:strong_form} is ensured by the Lax-Milgram theorem. Moreover, the
problem is well-posed, since the unique solution is bounded by the data as
follows:
\[
  \| \mu^{1/2} \u \|_{\H^1(\Omega^\pm)} \lesssim \mu_{+}^{1/2} \| \j_{\Gamma}
  \|_{\H^{1/2}(\Gamma)} + \| \mu^{-1/2} \f \|_{\L^{2}(\Omega)} + \| 
  \overline{\mu}^{-1/2} \g_{\Gamma} \|_{\H^{1/2}(\Gamma)}.
\]
\subsection{Discrete formulation}\label{sec:dis-for}

We consider as approximation space of $\mathcal{V}$ the aggregated \ac{fe}
space, see Section~\ref{sec:ag-fe-space},
\[
	\V_h \doteq \{ \v_h \in \V^+_\ag \times \V^-_\ag : 
	\v_h = \boldsymbol{0} \ \text{on} \ \partial \Omega \}.
\]
We consider an approximation of \eqref{eq:weak_form2} with this discrete space,
which reads:
\begin{equation}\label{eq:discrete_form} 
	\u_h \in \V_h \ : \ a_h(\u_h, \v_h) = \ell_h(\v_h), 
	\quad \forall \v_h \in \V_h,
\end{equation}
where the global \ac{fe} operators $a_h$ and $\ell_h$ are given by
\begin{align}
	a_h(\u_h,\v_h) &\doteq \int_{\Omega} \s(\u_h) : \eps(\v_h) \ \mathrm{d} 
	\Omega \\
	& \enskip + \sum_{F \in \mathcal{F}_{h}} \left[ 
	\frac{\beta\overline{\mu}}{h_{\cell_F}} \int_{F} \jump{\u_h} \cdot 
	\jump{\v_h} \ \mathrm{d} \Gamma - \int_{F} \n^+ \cdot \average{\s(\v_h)} 
	\cdot \jump{\u_h} \ \mathrm{d} \Gamma - \int_{F} \n^+ \cdot 
	\average{\s(\u_h)} \cdot \jump{\v_h} \ \mathrm{d} \Gamma  \right], \\
	\ell_h(\v_h) &\doteq \int_{\Omega} \f \cdot \v_h \ \mathrm{d} 
	\Gamma \\ 
	& \enskip + \sum_{F \in \mathcal{F}_{h}} \left[ 
	\frac{\beta\overline{\mu}}{h_{\cell_F}} \int_{F} \j_\Gamma \cdot 
	\jump{\v_h} \ \mathrm{d} \Gamma - \int_{F} \n^+ \cdot \average{\s(\v_h)} 
	\cdot \j_\Gamma \ \mathrm{d} \Gamma + \int_{F} \g_\Gamma \cdot \left( w_- 
	\v_h^+ + w_+ \v_h^- \right) \ \mathrm{d} \Gamma \right].
\end{align}
We observe that $a_h$ and $\ell_h$ contain the usual terms in Nitsche's
formulations, i.e.~terms that weakly impose the interface conditions,
symmetrizing terms and stabilization terms. The latter terms are those
premultiplied by $\beta$, which has to be large-enough to ensure coercivity of
the bilinear form $a_h$. Furthermore, the above formulation is
\emph{consistent}, by the following result:
\begin{lemma}[Consistency]
	Let $\u \in \H^2(\Omega^{\pm}) \cap \V$ solve~\eqref{eq:strong_form}. 
	Then, it holds $a_h(\u,\v_h) = \ell_h(\v_h)$, $\forall \v_h \in \V_h$.
	\label{lem:consistency}
\end{lemma}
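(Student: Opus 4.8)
The plan is to start from the strong form~\eqref{eq:strong_form}, test it against an arbitrary $\v_h \in \V_h$, integrate by parts subdomain-wise, and recover the terms of $a_h$ and $\ell_h$ with the help of the two interface conditions $\jump{\u} = \j_\Gamma$ and $\jump{\s(\u)}\cdot\n^+ = \g_\Gamma$. Concretely, I would multiply $-\bsnabla\cdot\s(\u) = \f$ by $\v_h^\alpha$ and integrate over $\Omega^\alpha$, $\alpha \in \{+,-\}$, separately. Since $\u \in \H^2(\Omega^\pm)$ we have $\s(\u) \in \H^1(\Omega^\pm)$, so its normal traces on the Lipschitz boundaries $\partial\Omega^\alpha$ are well defined in $\L^2$ and the first Green identity for elasticity (using the symmetry of $\s$) applies: $\int_{\Omega^\alpha}\s(\u^\alpha):\eps(\v_h^\alpha) - \int_{\partial\Omega^\alpha}(\s(\u^\alpha)\n^\alpha)\cdot\v_h^\alpha = \int_{\Omega^\alpha}\f\cdot\v_h^\alpha$.

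Next I would sum over $\alpha$. Because $\v_h = \boldsymbol{0}$ on $\partial\Omega$, the only surviving boundary contributions come from $\Gamma = \partial\Omega^+ \cap \partial\Omega^-$, and, using $\n^- = -\n^+$, the two interface contributions combine into $-\int_\Gamma\n^+\cdot\bigl(\s(\u^+)\cdot\v_h^+ - \s(\u^-)\cdot\v_h^-\bigr)$ (identifying $\sum_{F\in\mathcal{F}_h}\int_F$ with $\int_\Gamma$). The key algebraic tool is the weighted product rule on $\Gamma$: for traces of vector fields $\boldsymbol a, \boldsymbol b$ and weights $w^+ + w^- = 1$, $\jump{\boldsymbol a\cdot\boldsymbol b} = \average{\boldsymbol a}\cdot\jump{\boldsymbol b} + \jump{\boldsymbol a}\cdot(w^-\boldsymbol b^+ + w^+\boldsymbol b^-)$, i.e.~the jump of $\boldsymbol a$ pairs with the \emph{complementary} weighted average of $\boldsymbol b$. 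Applying this with $\boldsymbol a = \s(\u)\cdot\n^+$, whose jump is exactly $\g_\Gamma$ since $\n^+$ is continuous across $\Gamma$, and with $\boldsymbol b = \v_h$, the boundary term becomes $-\int_\Gamma\n^+\cdot\average{\s(\u)}\cdot\jump{\v_h} - \int_\Gamma\g_\Gamma\cdot(w^-\v_h^+ + w^+\v_h^-)$. Rearranging, the summed Green identity reads $\int_\Omega\s(\u):\eps(\v_h) = \int_\Omega\f\cdot\v_h + \int_\Gamma\n^+\cdot\average{\s(\u)}\cdot\jump{\v_h} + \int_\Gamma\g_\Gamma\cdot(w^-\v_h^+ + w^+\v_h^-)$.

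Finally I would insert this identity into $a_h(\u,\v_h)$ and compare with $\ell_h(\v_h)$. The term $\int_\Gamma\n^+\cdot\average{\s(\u)}\cdot\jump{\v_h}$ produced by the Green identity cancels the consistency term of $a_h$; the term $\int_\Gamma\g_\Gamma\cdot(w^-\v_h^+ + w^+\v_h^-)$ matches the last term of $\ell_h$; and the remaining symmetrising and stabilisation terms reduce to those of $\ell_h$ upon replacing $\jump{\u}$ by $\j_\Gamma$ via the first interface condition, since $\frac{\beta\overline\mu}{h_{\cell_F}}\int_F(\jump{\u}-\j_\Gamma)\cdot\jump{\v_h} = 0$ and $\int_F\n^+\cdot\average{\s(\v_h)}\cdot(\jump{\u}-\j_\Gamma) = 0$. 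Hence $a_h(\u,\v_h) = \ell_h(\v_h)$ for all $\v_h \in \V_h$.

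I expect the only genuinely delicate points to be (i) the regularity bookkeeping that legitimises the subdomain-wise Green identity and the $\L^2$ normal traces of $\s(\u)$ — handled by the hypothesis $\u \in \H^2(\Omega^\pm)$ — and (ii) keeping the outward normals $\n^+$ and $\n^- = -\n^+$ and the complementary weighted average $w^-\v_h^+ + w^+\v_h^-$ consistently aligned with the $w_\pm$ convention built into $\average{\cdot}$ and into the $\g_\Gamma$ term of $\ell_h$. Once these are in place, the proof is a routine matching of terms.
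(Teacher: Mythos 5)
Your proof is correct and follows essentially the same route as the paper's: subdomain-wise integration by parts, the weighted product rule $\jump{\boldsymbol a\cdot\boldsymbol b}=\average{\boldsymbol a}\cdot\jump{\boldsymbol b}+\jump{\boldsymbol a}\cdot(w^-\boldsymbol b^++w^+\boldsymbol b^-)$ to split the interface contribution, and then substitution of the three equations of~\eqref{eq:strong_form} to match terms. You simply spell out the term-by-term cancellation that the paper leaves as a check.
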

\begin{proof}
	Since $\u$ solves problem~\eqref{eq:strong_form} (in a weak sense), 
	integration by parts leads to:
	\begin{align}
		\int_{\Omega} \s(\u) : \eps(\v_h) \ \mathrm{d} \Omega &= 
		- \int_{\brokendomain} \v_h \cdot \bsnabla \cdot \s(\u) \ \mathrm{d} 
		\Omega \\ & \enskip + \int_{\Gamma}^{} \n^ + \cdot \average{\s(\u)} \cdot 
		\jump{\v_h} \ \mathrm{d} \Gamma + \int_{\Gamma} \n^+ \cdot 
		\jump{\s(\u)} \cdot \left( w^- \v_h^+ + w^+ \v_h^- \right) \ 
		\mathrm{d} \Gamma, 
	\end{align}
	for	any $\v_h \in \V_h$. Combining this result with $-\bsnabla
	\cdot \s(\u) = \f$, $\jump{\u} = \j_{\Gamma}$ and $\n^+ \cdot \jump{\s(\u)}
	= \g_{\Gamma}$, we can check that all terms in the discrete
	formulation~\eqref{eq:discrete_form} cancel out. 
\end{proof}

For the sake of proving coercivity, we need the following auxiliary result.
\begin{lemma}\label{lem:bound-l2-cut}
  Let $\cell \in \T_{h,\W}^\alpha$ and $\u_\cell \in \mathcal{Q}_{q}(T)$. There
  exists $C_{\eta_{0}} > 0$, dependent on the well-posedness threshold $\eta_0$,
  such that $\norm{\u_\cell}{\L^2(\cell)}{2} \leq C_{\eta_{0}}
  \norm{\u_\cell}{\L^2(\cell \cap \Omega^\alpha)}{2}$, $\alpha \in \{+,-\}$.
\end{lemma}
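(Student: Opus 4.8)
The plan is to note that this is a statement purely about the finite-dimensional space $\mathcal{Q}_q$, in which the only quantitative information about the cut is the volume fraction $\eta_\cell^\alpha \geq \eta_0$, and to prove it by a scaling-plus-compactness argument. First I would pull the inequality back to the reference cell: writing $\cell = \Phi_\cell(\hat\cell)$ with $\Phi_\cell$ affine (the meshes used here are Cartesian / tree-based, so $\mathcal{Q}_q$ is preserved and the Jacobian is constant; on non-affine cells $\Phi_\cell$ is bi-Lipschitz close to affine with shape-regularity constants), the change of variables turns $\norm{\u_\cell}{\L^2(\cell)}{2}$ and $\norm{\u_\cell}{\L^2(\cell\cap\Omega^\alpha)}{2}$ into $\norm{\hat\u}{\L^2(\hat\cell)}{2}$ and $\norm{\hat\u}{\L^2(\hat\omega)}{2}$ up to the same factor $|\det D\Phi_\cell|$, and it maps $\cell\cap\Omega^\alpha$ to a measurable set $\hat\omega\subset\hat\cell$ with $|\hat\omega| \geq \eta_0\,|\hat\cell|$. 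So it suffices to find $C = C(\eta_0,q,d)$ with $\norm{\hat p}{\L^2(\hat\cell)}{2} \leq C\,\norm{\hat p}{\L^2(\hat\omega)}{2}$ for every $\hat p \in \mathcal{Q}_q(\hat\cell)$ and every measurable $\hat\omega\subset\hat\cell$ of measure at least $\eta_0|\hat\cell|$.

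I would establish this by contradiction. If it failed there would be sequences $\hat p_n \in \mathcal{Q}_q(\hat\cell)$ with $\norm{\hat p_n}{\L^2(\hat\cell)}{2} = 1$ and measurable $\hat\omega_n\subset\hat\cell$, $|\hat\omega_n| \geq \eta_0|\hat\cell|$, with $\norm{\hat p_n}{\L^2(\hat\omega_n)}{2}\to 0$. Since $\mathcal{Q}_q(\hat\cell)$ is finite-dimensional, a subsequence converges in $\L^2(\hat\cell)$ to some $\hat p$ with $\norm{\hat p}{\L^2(\hat\cell)}{2} = 1$, and in particular $|\hat p_n|^2 \to |\hat p|^2$ in $L^1(\hat\cell)$. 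The indicators $\chi_{\hat\omega_n}$ are bounded by $1$ in $L^\infty(\hat\cell) = (L^1(\hat\cell))^\ast$, so a further subsequence converges weak-$\ast$ to some $g\in L^\infty(\hat\cell)$ with $0\leq g\leq 1$ and $\int_{\hat\cell} g \geq \eta_0|\hat\cell| > 0$. Writing $\int_{\hat\cell}\chi_{\hat\omega_n}|\hat p_n|^2 = \int_{\hat\cell}\chi_{\hat\omega_n}(|\hat p_n|^2-|\hat p|^2) + \int_{\hat\cell}\chi_{\hat\omega_n}|\hat p|^2$ and passing to the limit (the first term is bounded by $\| |\hat p_n|^2-|\hat p|^2 \|_{L^1}\to 0$, the second tends to $\int_{\hat\cell} g|\hat p|^2$ by weak-$\ast$ convergence tested against $|\hat p|^2\in L^1$) gives $\int_{\hat\cell} g|\hat p|^2 = 0$. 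As $g\geq 0$ with positive integral, $|\hat p|^2$ vanishes on a set of positive Lebesgue measure, so the polynomial $\hat p$ is identically zero, contradicting $\norm{\hat p}{\L^2(\hat\cell)}{2}=1$. Hence the supremum of $\norm{\hat p}{\L^2(\hat\cell)}{2}/\norm{\hat p}{\L^2(\hat\omega)}{2}$ over all admissible $\hat p,\hat\omega$ is finite, and $C_{\eta_0}$ is taken to be a shape-regularity multiple of this supremum.

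The step I expect to be the real obstacle — and the reason a soft compactness argument is preferable to a direct estimate — is precisely the \emph{uniformity of the constant in the cut}: $\cell\cap\Omega^\alpha$ can be an essentially arbitrary measurable subset of $\cell$ (small, thin, or disconnected pieces are allowed), and the only leverage against a polynomial concentrating its mass away from it is the single scalar bound $|\hat\omega|\geq\eta_0|\hat\cell|$. The argument is arranged so that this lone volume constraint, combined with finite-dimensionality and the fact that a polynomial vanishing on a positive-measure set is trivial, is exactly what forces the ratio to stay bounded. Alternatively, since taking $\Omega^\alpha$ as the physical domain embedded in $\Omega$ places us verbatim in the unfitted-boundary setting of~\cite{Badia2018}, one may simply invoke the corresponding estimate established there; the self-contained version above is sketched for completeness.
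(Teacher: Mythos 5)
Your proof is correct and takes essentially the same route as the paper's: pull back to the reference cell, observe that the cut portion retains measure at least (a constant times) $\eta_0\,\meas_d(\hat\cell)$, and exploit finite-dimensionality together with the fact that a polynomial vanishing on a set of positive measure is identically zero. Your weak-$*$ compactness argument additionally makes explicit the uniformity of the constant over all admissible cut subsets $\hat\omega$, a point the paper's one-line appeal to ``equivalence of norms in finite-dimensional vector spaces and a scaling argument'' leaves implicit.
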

\begin{proof}
The proof is direct for \emph{interior} well-posed cells; we restrict ourselves
to well-posed \emph{cut} cells. Let us consider a cell $\cell$ and its interior
portion $\cell \cap \Omega$. Using the inverse of the geometrical map, which
maps $\cell$ into the reference cell $\hat{\cell}$, one can map the interior
portion to the reference cell, which is represented with $\hat{\cell}_{\rm in}$.
It is easy to check that $\meas_d(\hat{\cell}_{\rm in}) \ge C \eta_{0}
\meas_d(\hat{\cell})$. In fact, the constant is 1 for affine maps.
$\norm{\cdot}{\L^2(\hat{T}_{\mathrm{in}})}{2}$ is a norm for
$\mathcal{Q}_{q}(\hat{T})$, since a polynomial that vanishes in a domain of
non-zero measure is equal to zero. We prove the result by using the equivalence
of norms in finite-dimensional vector spaces and a scaling argument.
\end{proof}

Given $\cell \in \T_{h,\act}^\alpha$, $\alpha \in \{+,-\} $, let us denote by
$\cell_1, \ldots, \cell_{n_\cell^\alpha}$, $n_\cell^\alpha \geq 1$, the set of
\emph{constraining} well-posed cells of $\cell$ in $\T_{h,\W}^\alpha$, i.e.~the
set of well-posed cells that constrain at least one \ac{dof} of $\cell$ in
$\T_{h,\W}^\alpha$. \myadded{Given $F \in \mathcal{F}_h$, we recall that
$\cell_F^\alpha$ is the cell in $\T_{h,\act}^\alpha$ satisfying $F \cap
\overline{\Omega^\alpha} \subset \overline{\cell_F^\alpha}$. Hence,} we define
$\Omega_{\cell_F^\alpha} \doteq \Omega^\alpha \cap \left( \cell_F^\alpha \cup
\bigcup_{i=1}^{n_\cell^\alpha} \cell_i \right)$. With this notation, we can state
an inequality for discrete functions in cut cells (see \cite[Lemma
A.7]{inpreparation2020}):
\begin{equation}\label{eq:b3wp}
	\| \bsnabla \v_h^\alpha \|_{\L^2(F)}^2 \lesssim C_{\eta_0} h_{\cell_F^\alpha}^{-1} 
	\| \bsnabla \v_h^\alpha \|_{\L^2(\Omega_{\cell_F^\alpha})}^2, \quad \alpha \in 
	\{+,-\}, \quad \forall \v_h \in \V_h, \quad \forall F \in \mathcal{F}_h.
\end{equation}
We also make use of the following inequality for continuous functions on cut
cells (see~\cite{hansbo2002unfitted}):
\begin{equation}\label{eq:ineqc1}
  \| \psi \|^2_{L^2(\partial(\Omega \cap \cell))} \lesssim h_{\cell}^{-1} \| \psi 
  \|^2_{L^2(\Omega \cap \cell)} + h_{\cell} \left| \psi \right|^{2}_{H^1
  (\Omega \cap \cell)}, \qquad \forall \psi \in H^1(\Omega \cap \cell),
\end{equation}
where $\partial(\Omega \cap T)$ is the boundary of $\Omega\cap T$.\footnote{We
note that the proof in~\cite{hansbo2002unfitted} assumes that $\Omega \cap T$ is
connected, together with the assumption that $\Gamma$ has a bounded curvature.
The connected intersection can be handled either replicating cells (as commented
above) or assuming a \emph{fine enough} mesh.}

Let us define the space $\V(h) \doteq \V_h + \H^2(\Omega^\pm) \cap \V$. We endow
$\V(h)$ with the broken norm:
\[
	\normstab{\v}{2} \doteq \sum_{\alpha \in \{+,-\}} \mu_{\alpha} 
	\| \bsnabla \v^\alpha \|^2_{\L^2(\Omega^{\alpha})} + \sum_{F \in 
	\mathcal{F}_h} \frac{\overline{\mu}}{h_{\cell_F}} \| \jump{\v} 
	\|^2_{\L^2(F)} + \sum_{\alpha \in \{+,-\}} \sum_{\cell \in 
	\T_{h,\act}^{\alpha}} \mu_{\alpha} h_{\cell}^{2} \| \v 
	\|^2_{\H^2(\cell \cap \Omega^\alpha)}.
\]
It can be checked that $\| \v \|_{\L^2(\Omega)} \lesssim \normstab{\v}{}$, for
$\v \in \V(h)$, see, e.g.~\cite[Lemma 5.8]{Badia2018}. The following lemma
restricted to the discrete space $\mathcal{V}_{h}$ provides the well-posedness
of the discrete problem. Its extension to $\mathcal{V}(h)$ will be required in
the convergence analysis.  

\begin{lemma}[Well-posedness]\label{lem:wellposed}
	The bilinear form in the discrete formulation~\eqref{eq:discrete_form}
	satisfies the following properties uniformly w.r.t.~the mesh size $h$ of the
	background mesh and interface intersection:
	\begin{enumerate}
	  \item[(i)] Coercivity: 
	  \[
	    a_h(\u_h,\u_h) \gtrsim \normstab{\u_h}{2}, \qquad \forall \u_h \in \V_h,
	  \]
	  if $\beta > C$, for some (large-enough) positive constant $C$.
	  \vspace{0.5em}
	  \item[(ii)] Continuity:
	  \[
		a_h(\u,\v) \lesssim \normstab{\u}{} \normstab{\v}{}, \qquad \forall 
		\u, \v \in \V(h).
	  \]
	\end{enumerate}
	Therefore, there exists a unique solution to
	problem~\eqref{eq:discrete_form}. 
\end{lemma}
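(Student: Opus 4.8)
The plan is to prove coercivity (i) and continuity (ii) separately, then invoke Lax--Milgram on the finite-dimensional space $\V_h$ to conclude existence and uniqueness.

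For coercivity, I would start from
\[
a_h(\u_h,\u_h) = \sum_{\alpha} \int_{\Omega^\alpha} \s(\u_h):\eps(\u_h) \ \d\Omega
+ \sum_{F \in \mathcal{F}_h} \frac{\beta\overline{\mu}}{h_{\cell_F}} \|\jump{\u_h}\|^2_{\L^2(F)}
- 2\sum_{F \in \mathcal{F}_h} \int_F \n^+ \cdot \average{\s(\u_h)} \cdot \jump{\u_h} \ \d\Gamma .
\]
The elasticity term plus the jump-penalty term are controlled from below using the Korn inequality~\eqref{eq:korn} (the jump term there is exactly what appears in the penalty, up to the $\overline{\mu}$ weight), which gives $\sum_\alpha \mu_\alpha \|\bsnabla\u_h^\alpha\|^2_{\L^2(\Omega^\alpha)}$ plus a fraction of the jump-penalty term. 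The crux is to absorb the consistency/symmetrizing term $\int_F \n^+ \cdot \average{\s(\u_h)}\cdot\jump{\u_h}$. I would bound it by Cauchy--Schwarz on $F$, then control $\|\average{\s(\u_h)}\|_{\L^2(F)}$: the harmonic weights make $\average{\s(\u_h)} \cdot \n^+$ scale like $\overline{\mu}$ times $\bsnabla\u_h^\alpha$ on the $F$-side, i.e.\ $\|\overline{\mu}^{1/2} w^\alpha \s(\u_h^\alpha)\n^+\|_{\L^2(F)} \lesssim \overline{\mu}^{1/2} w^\alpha \mu_\alpha^{1/2} \mu_\alpha^{1/2}\|\bsnabla\u_h^\alpha\|_{\L^2(F)}$, and since $\overline{\mu} w^\alpha \le \mu_\beta w^\alpha = \overline{\mu}/2 \cdot (\mu_\beta/\mu_\alpha)\cdot(\mu_\alpha/\mu_\alpha)$... more cleanly, $w^\alpha \mu_\alpha = \overline{\mu}/2$, so the average of $\mu$-weighted stresses is uniformly bounded by $\overline{\mu}$ independent of contrast. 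Then the key inverse-type estimate~\eqref{eq:b3wp} converts $\|\bsnabla\u_h^\alpha\|_{\L^2(F)}^2$ into $C_{\eta_0} h_{\cell_F^\alpha}^{-1}\|\bsnabla\u_h^\alpha\|^2_{\L^2(\Omega_{\cell_F^\alpha})}$ — this is precisely where robustness to the small cut cell problem enters, because it relies on the Ag\ac{fe} structure rather than on the (possibly tiny) cut cell. After a Young's inequality with a small parameter, the gradient contribution is absorbed into the coercive elasticity term (using finite overlap of the patches $\Omega_{\cell_F^\alpha}$, which follows from the 2:1-balance/aggregate-size assumption) and the jump contribution into the penalty term, provided $\beta$ exceeds a constant $C$ depending only on $\eta_0$, the \ac{fe} order, and $\Omega,\Gamma$. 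Finally the last term of $\normstab{\cdot}{2}$, the $\sum h_\cell^2 \|\v\|^2_{\H^2(\cell\cap\Omega^\alpha)}$ piece, vanishes on $\V_h$ for linears, or for higher order is controlled by an inverse inequality against the gradient term — I would remark it is innocuous on the discrete space.

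For continuity on $\V(h)$, I would bound each of the four families of terms by Cauchy--Schwarz. The volume term is immediate from~\eqref{eq:cont-strong-form}. For the symmetrizing terms $\int_F \n^+\cdot\average{\s(\u)}\cdot\jump{\v}$, on the $\L^2(F)$ norm of $\jump{\v}$ I use the factor $\overline{\mu}^{1/2} h_{\cell_F}^{-1/2}$ from $\normstab{\v}{}$, and on $\average{\s(\u)}$ I split into the discrete part (use~\eqref{eq:b3wp} again, as above) and the smooth $\H^2$ part (use the trace inequality~\eqref{eq:ineqc1} applied to $\bsnabla\u$, which produces exactly the $\mu_\alpha h_\cell^{-1}\|\bsnabla\u\|^2 + \mu_\alpha h_\cell\|\u\|^2_{\H^2}$ contributions matching the three pieces of $\normstab{\u}{2}$); the harmonic weighting again keeps the $\overline{\mu}$-weighted averages contrast-uniform. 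The penalty term $\frac{\beta\overline{\mu}}{h_{\cell_F}}\int_F \jump{\u}\cdot\jump{\v}$ is a direct Cauchy--Schwarz against the jump part of the norm on both arguments.

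Having established (i) and (ii), since $\V_h$ is finite dimensional, coercivity alone already gives injectivity hence bijectivity of the linear operator, so existence and uniqueness of $\u_h$ solving~\eqref{eq:discrete_form} follows; continuity is recorded for later use in the error analysis. The main obstacle I anticipate is the careful bookkeeping in the absorption argument for the symmetrizing term: one must simultaneously track the harmonic weights (to stay contrast-robust), the $C_{\eta_0}$ constant and patch overlap from~\eqref{eq:b3wp} (to stay cut-robust), and the choice of Young's parameter so that the threshold on $\beta$ depends on none of the forbidden quantities. Everything else is routine once these are lined up.
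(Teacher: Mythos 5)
Your proposal follows essentially the same route as the paper: Korn's inequality~\eqref{eq:korn} for the volume and penalty terms, the identity $w^\alpha\mu_\alpha \sim \overline{\mu}$ together with the cut-robust inverse estimate~\eqref{eq:b3wp} and Young's inequality to absorb the symmetrizing term, the discrete inverse inequality plus Lemma~\ref{lem:bound-l2-cut} for the $\H^2$ piece of the norm, and the split into~\eqref{eq:b3wp} for discrete functions versus the trace inequality~\eqref{eq:ineqc1} for $\H^2$ functions in the continuity bound. The only slip is the parenthetical claim that the $\H^2$ term vanishes for linears (mixed second derivatives of $\mathcal{Q}_1$ functions do not vanish), but your fallback via the inverse inequality is exactly the paper's argument, so this is immaterial.
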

\begin{proof}
	By definition of the bilinear form $a_h$ and~\eqref{eq:korn}, we have that
	\begin{equation}\label{eq:b2-wp}
	  a_h(\u_h,\u_h) \gtrsim \hspace{-0.2cm} \sum_{\alpha \in \{+,-\}} 
	  \hspace{-0.3cm} C_{\s} C_\Omega \mu_{\alpha} \| \bsnabla \u_h^\alpha 
	  \|^2_{\L^2(\Omega^{\alpha})} + \sum_{F \in \mathcal{F}_h} \frac{\beta 
	  \overline{\mu}}{h_{\cell_F}} \| \jump{\u_h} \|^2_{\L^2(F)} - 2 \sum_{F 
	  \in \mathcal{F}_h} \int_{F} \n^{+} \cdot \average{\s(\u_h)} \cdot 
	  \jump{\u_h} \ \mathrm{d} \Gamma,
	\end{equation}
	for any $\u_h \in \V_h$. In order to prove coercivity, we have to bound the
	indefinite term. Let us pick an arbitrary $\u_h \in \V_h$. Using the fact
	that $w_\alpha \mu_\alpha = \overline{\mu}$, Cauchy-Schwarz and triangle 
	inequalities and \eqref{eq:b3wp}, we get 
	\begin{equation}\label{eq:b4wp}
	  \hspace{-0.2cm} \| \n^+ \cdot \average{\s(\u_h)} \|_{\L^2(F)}^2 \lesssim 
	  \hspace{-0.3cm} \sum_{\alpha \in \{+,-\}} \hspace{-0.3cm} w_\alpha^2 
	  \mu_\alpha^2 \| \bsnabla \u_h^\alpha \|_{\L^2(F)}^2 = \overline{\mu}^2 
	  \hspace{-0.3cm} \sum_{\alpha \in \{+,-\}} \hspace{-0.3cm} \| \bsnabla 
	  \u_h^\alpha \|_{\L^2(F)}^{2} \leq C_{\eta_0} \overline{\mu} \hspace{-0.3cm} 
	  \sum_{\alpha \in \{+,-\}} \hspace{-0.1cm} 
	  \frac{\mu_\alpha}{h_{\cell_F^\alpha}} \| \bsnabla 
	  \u_h^\alpha \|_{\L^2(\Omega_{\cell_F^\alpha})}^2.
	\end{equation}
	Usage of the Cauchy-Schwarz and Young inequalities and the previous result 
	leads to
	\begin{equation}\label{eq:b1-wp}
	  \begin{aligned}
		\left| \ 2 \int_{F} \n^+ \cdot \average{\s(\u_h)} \cdot \jump{\u_h} \ 
		\mathrm{d} \Gamma \ \right| &\lesssim \frac{h_{\cell_F}}{\gamma 
		\overline{\mu}} \| \n^+ \cdot \average{\s(\u_h)} \|_{\L^2(F)}^2 + 
		\frac{\gamma \overline{\mu}}{h_{\cell_F}} \| \jump{\u_h} 
		\|_{\L^2(F)}^2 \\
		&\lesssim C_{\eta_0} \hspace{-0.3cm} \sum_{\alpha 
		\in \{+,-\}} \hspace{-0.1cm} \frac{\mu_\alpha}{\gamma} \| \bsnabla \u_h^\alpha 
		\|_{\L^2(\Omega_{\cell_F^\alpha})}^2 + 
		\frac{\gamma\overline{\mu}}{h_{\cell_F}} \| \jump{\u_h} 
		\|_{\L^2(F)}^2, \quad \forall \u_h \in \V_h,
	  \end{aligned}
	\end{equation}
	with $\gamma > 0$ an arbitrary positive constant. Combining~\eqref{eq:b2-wp}
	and~\eqref{eq:b1-wp}, and using the fact that the number of neighbouring
	cells is bounded, we obtain: 
	\[
	  a_h(\u_h,\u_h) \gtrsim \left( C_{\s} C_\Omega - \frac{C_{\eta_0}}{\gamma} 
	  \right) \sum_{\alpha \in \{+,-\}} \mu_\alpha \| \bsnabla \u_h^\alpha 
	  \|^2_{\L^2(\Omega^\alpha)} + \left( 1 - \frac{\gamma}{\beta} \right) 
	  \sum_{F \in \mathcal{F}_h} \frac{\beta \overline{\mu}}{h_{\cell_F}} 
	  \| \jump{\u_h} \|_{\L^2(F)}^2.
	\]
	Let us pick $\gamma = \frac{2C_{\eta_0}}{C_{\s} C_\Omega}$. Assuming $\beta
	\geq 2 \gamma$, the terms in the right-hand side are positive. In order to
	check that $a_h(\u_h,\u_h)$ is also a bound for the $\H^2$ broken semi-norm
	in $\normstab{\cdot}{}$, we proceed as follows. The local discrete inverse
	inequality $\| \bsnabla \boldsymbol{\xi}_h \|_{\L^2(\cell \cap
	\Omega^\alpha)} \leq C h^{-1} \| \boldsymbol{\xi}_h \|_{\L^2(\cell)}$ can
	readily be applied to finite element functions (and its gradients) in
	Ag\ac{fe} spaces (see, e.g.~\cite[(12)]{Badia2018}). On the other hand, by
	Lemma~\ref{lem:bound-l2-cut} we have that $\| \boldsymbol{\xi}_h
	\|_{\L^2(\cell)} \leq C \| \boldsymbol{\xi}_h \|_{\L^2(\cell \cap
	\Omega^\alpha)}$. As a result, we have that $h_\cell \left| \v_h
	\right|_{\H^2(\cell \cap \Omega^\alpha)} \leq C \| \bsnabla \v_h
	\|_{\L^2(\cell \cap \Omega^\alpha)}$, for any $\v_h \in \V_h$. Hence,
	bilinear form $a_h$ satisfies coercivity; it is non-singular.
	
	In order to prove continuity, we need a continuous version
	of~\eqref{eq:b4wp} for functions in $\H^2(\Omega^{\pm}) \cap \V$.
	Using~\eqref{eq:ineqc1}, we get the sought-after bound:
	\begin{equation}\label{eq:b5wp}
      \hspace{-0.2cm} \| \n^+ \cdot \average{\s(\u)} \|_{\L^2(F)}^{2} \lesssim 
	  \overline{\mu}^{2} \hspace{-0.3cm} \sum_{\alpha \in \{+,-\}} \hspace{-0.3cm} 
	  \| \bsnabla \u^\alpha \|_{\L^2(F)}^{2} \lesssim \overline{\mu} 
	  \hspace{-0.3cm} \sum_{\alpha \in \{+,-\}} \left( 
	  \frac{\mu_{\alpha}}{h_{\cell_F^\alpha}} \| \bsnabla \u^\alpha 
	  \|_{\L^2(\cell_F^{\alpha} \cap \Omega^\alpha)}^{2} + \mu_\alpha 
	  h_{\cell_F^\alpha} \left| \u^\alpha \right|_{\H^{2}(\cell_F^{\alpha} 
	  \cap \Omega^\alpha)}^{2} \right).
	\end{equation}
	It follows that continuity is a consequence of~\eqref{eq:cont-strong-form},
	\myadded{\eqref{eq:b4wp} for discrete functions in $\V_h$, \eqref{eq:b5wp} for
	functions in $\H^2(\Omega^{\pm}) \cap \V$}, and the Cauchy-Schwarz inequality.
	Since the problem is finite-dimensional and the corresponding linear system
	matrix is non-singular, there exists a unique solution to
	problem~\eqref{eq:discrete_form}.
\end{proof}

Let us assume that the background mesh $\T_h$ is quasi-uniform, with
characteristic size $h \doteq \max_{\cell \in \T_h} h_\cell$. We adopt now an
extended Scott-Zhang interpolant $\Pi_h^{\sz} : \V \to \V_h$ given by
$\Pi_h^{\sz}(\u) = \left\{ \Pi_{h,+}^{\sz}(\u), \Pi_{h,-}^{\sz}(\u) \right\}$
with $\Pi_{h,\alpha}^{\sz}(\u) \in \V_\ag^\alpha$, $\alpha \in \{+,-\}$, defined
in~\cite{Badia2018a}. The local approximability property in~\cite[Theorem
4.4]{Badia2018a} and the trace inequality~\eqref{eq:ineqc1} applied to $
\psi = \u^\alpha-\Pi_{h,\alpha}^{\sz}(\u)$ yield the following result.
\begin{proposition}\label{prop:local-approx}
  If $\u \in \H^m(\Omega^{\pm})$, $m \geq 2$, and the order of $\V_h$ is greater
  or equal than $m-1$, then
  \begin{equation}
	\normstab{\u-\Pi_h^{\sz}(\u)}{} \lesssim h^{m-1} \seminorm{\u}{\H^m(\Omega^\pm)}{}.
  \end{equation}
\end{proposition}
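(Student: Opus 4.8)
The plan is to decompose $\normstab{\u-\Pi_h^{\sz}(\u)}{2}$ into its three defining pieces and bound each separately: the broken bulk gradient term $\sum_{\alpha}\mu_\alpha\|\bsnabla(\u^\alpha-\Pi_{h,\alpha}^{\sz}(\u))\|_{\L^2(\Omega^\alpha)}^2$, the interface jump term $\sum_{F\in\mathcal{F}_h}\frac{\overline{\mu}}{h_{\cell_F}}\|\jump{\u-\Pi_h^{\sz}(\u)}\|_{\L^2(F)}^2$, and the broken $\H^2$ term $\sum_\alpha\sum_{\cell\in\T_{h,\act}^\alpha}\mu_\alpha h_\cell^2\|\u^\alpha-\Pi_{h,\alpha}^{\sz}(\u)\|_{\H^2(\cell\cap\Omega^\alpha)}^2$. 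Since $\V_h=\V_\ag^+\times\V_\ag^-$ and $\Pi_h^{\sz}(\u)=\{\Pi_{h,+}^{\sz}(\u),\Pi_{h,-}^{\sz}(\u)\}$ with $\Pi_{h,\alpha}^{\sz}(\u)\in\V_\ag^\alpha$, the two subdomains decouple in the bulk terms, so it suffices to argue for a fixed $\alpha\in\{+,-\}$; throughout, quasi-uniformity of $\T_h$ makes every $h_\cell$ and $h_{\cell_F}$ comparable to $h$, and we tacitly assume $\u\in\V\cap\H^m(\Omega^\pm)$ so that $\Pi_h^{\sz}(\u)$ is well defined.

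For the bulk terms I would invoke the local approximability of the extended Scott-Zhang interpolant on aggregated spaces, \cite[Theorem 4.4]{Badia2018a}, cell by cell: since $m\ge2$ and $\V_h$ has order at least $m-1$, for each $\cell$ one has $\|\u^\alpha-\Pi_{h,\alpha}^{\sz}(\u)\|_{\H^l(\cell\cap\Omega^\alpha)}\lesssim h^{m-l}\seminorm{\u^\alpha}{\H^m(\omega_\cell^\alpha)}{}$ for $l=0,1,2$, where $\omega_\cell^\alpha$ is the macro-patch of cells on which the local interpolant depends. The gradient term is the case $l=1$. For the $\H^2$ term I would expand $h_\cell^2\|\cdot\|_{\H^2(\cell\cap\Omega^\alpha)}^2$ into its $\L^2$, $\H^1$ and $\H^2$-seminorm parts, which the estimate controls by $h^{2m+2}$, $h^{2m}$ and $h^{2m-2}$ times $\seminorm{\u^\alpha}{\H^m(\omega_\cell^\alpha)}{2}$ respectively, all of which are $\lesssim h^{2(m-1)}\seminorm{\u^\alpha}{\H^m(\omega_\cell^\alpha)}{2}$ because $h\lesssim1$. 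Summing over $\cell$ and using that the macro-patches $\omega_\cell^\alpha$ have uniformly bounded overlap --- a consequence of shape-regularity together with the standing hypothesis that aggregates have diameter comparable to their root cell --- collapses the patch seminorms into $\seminorm{\u^\alpha}{\H^m(\Omega^\alpha)}{2}$, so each bulk contribution is $\lesssim\mu_\alpha h^{2(m-1)}\seminorm{\u^\alpha}{\H^m(\Omega^\alpha)}{2}$.

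For the jump term I would, on each $F\in\mathcal{F}_h$, write $\jump{\u-\Pi_h^{\sz}(\u)}=(\u^+-\Pi_{h,+}^{\sz}(\u))|_\Gamma-(\u^--\Pi_{h,-}^{\sz}(\u))|_\Gamma$ and use the triangle inequality to reduce to estimating $\|\u^\alpha-\Pi_{h,\alpha}^{\sz}(\u)\|_{\L^2(F)}^2$ on the cell $\cell_F^\alpha$. Applying the trace inequality \eqref{eq:ineqc1} to $\psi=\u^\alpha-\Pi_{h,\alpha}^{\sz}(\u)$ on $\Omega^\alpha\cap\cell_F^\alpha$ (noting $F\subset\partial(\Omega^\alpha\cap\cell_F^\alpha)$ by the definition of $\cell_F^\alpha$) and then the cases $l=0,1$ of \cite[Theorem 4.4]{Badia2018a} gives $\|\u^\alpha-\Pi_{h,\alpha}^{\sz}(\u)\|_{\L^2(F)}^2\lesssim h^{2m-1}\seminorm{\u^\alpha}{\H^m(\omega_{\cell_F}^\alpha)}{2}$; multiplying by $\overline{\mu}/h_{\cell_F}\approx\overline{\mu}/h$, summing over $F$ with bounded overlap, and using $\overline{\mu}\le2\mu_{\min}\le2\mu_\alpha$, the jump contribution is again $\lesssim h^{2(m-1)}\sum_\alpha\mu_\alpha\seminorm{\u^\alpha}{\H^m(\Omega^\alpha)}{2}$. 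Adding the three estimates and taking square roots yields the claim, using $\sum_\alpha\mu_\alpha\seminorm{\u^\alpha}{\H^m(\Omega^\alpha)}{2}\le\max_\alpha\mu_\alpha\,\seminorm{\u}{\H^m(\Omega^\pm)}{2}$. The arithmetic with powers of $h$ is routine; the points that need care are, first, checking that \cite[Theorem 4.4]{Badia2018a} supplies the $\H^2$-seminorm estimate on cut cells of the aggregated space (not merely the $\H^1$ one), which is exactly what the last term of $\normstab{\cdot}{}$ requires and is where the order $\ge m-1$ enters, and second, justifying that summing the local, patch-based bounds over all cells and faces costs only a fixed multiplicative constant, which rests on the bounded-aggregate-size hypothesis recalled just before the statement. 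Should the $\H^2$ local estimate not be available directly, a fallback is to split $\u^\alpha-\Pi_{h,\alpha}^{\sz}(\u)=(\u^\alpha-P_\cell\u^\alpha)+(P_\cell\u^\alpha-\Pi_{h,\alpha}^{\sz}(\u))$ with $P_\cell$ a cellwise polynomial projection, bound the first summand by Bramble-Hilbert and the second by the local inverse inequality on $\V_h$ combined with Lemma~\ref{lem:bound-l2-cut}, exactly as in the coercivity argument of Lemma~\ref{lem:wellposed}.
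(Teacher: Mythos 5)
Your proposal is correct and follows essentially the same route as the paper, which proves the proposition in one line by combining the local approximability of the extended Scott--Zhang interpolant from \cite[Theorem 4.4]{Badia2018a} with the trace inequality~\eqref{eq:ineqc1} applied to $\psi = \u^\alpha-\Pi_{h,\alpha}^{\sz}(\u)$; your term-by-term decomposition of $\normstab{\cdot}{}$ and the bounded-overlap summation are exactly the details the paper leaves implicit. The only point worth noting is that your final step makes explicit a factor $\max_\alpha\mu_\alpha$ absorbed into the paper's $\lesssim$.
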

\noindent{In order to prove \emph{a priori} error estimates, we must assume
additional regularity on the solution. For $\Omega$ being a convex polygon,
$\Gamma$ of class $\mathcal{C}^2$ and $\g_\Gamma \in \H_{00}^{1/2}(\Gamma)$, the
interface problem enjoys smoothing properties and its solution $\u \in
\H^2(\Omega^{\pm})\cap \mathcal{V}$ (see~\cite{chen1998finite}).
Neglecting the geometrical error,
the consistency in Lemma~\ref{lem:consistency}, well-posedness in
Lemma~\ref{lem:wellposed} and the approximability property in
Proposition~\ref{prop:local-approx} can be combined to prove an estimate in the
$\V(h)$ norm. Furthermore, under the previous assumptions, a duality argument
analogous to~\cite[Theorem 6]{hansbo2002unfitted} can be used to obtain the
$L^2$ estimate. The geometrical error in the approximation could be incorporated
into the discussion with the same ideas as, e.g.~in~\cite{chen1998finite}.}
\begin{proposition}\label{prop:a-priori-V}
	If $\u \in \H^m(\Omega^{\pm})$, $m \geq 2$, is the solution of
	\eqref{eq:weak_form1}-\eqref{eq:weak_form2} and $\u_h \in \V_h$ is the
	solution of \eqref{eq:discrete_form}, with the order of $\V_h$ greater or
	equal than $m-1$, then
	\begin{equation}
		\normstab{\u-\u_h}{} \lesssim h^{m-1} 
		\seminorm{\u}{\H^m(\Omega^\pm)}{}, \qquad 
		\norm{\u-\u_h}{\L^2(\Omega)}{} \lesssim h^{m} 
		\seminorm{\u}{\H^m(\Omega^\pm)}{}.
	\end{equation}
\end{proposition}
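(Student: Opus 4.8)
The plan is to assemble the three facts already established --- consistency (Lemma~\ref{lem:consistency}), well-posedness with coercivity and continuity (Lemma~\ref{lem:wellposed}) and the interpolation estimate (Proposition~\ref{prop:local-approx}) --- first into the energy-norm bound by a C\'ea-type argument, and then into the $\L^2$ bound by an Aubin--Nitsche duality argument. Note first that, since $\u \in \H^2(\Omega^{\pm}) \cap \V$ solves~\eqref{eq:strong_form}, Lemma~\ref{lem:consistency} gives $a_h(\u,\v_h) = \ell_h(\v_h)$ for all $\v_h \in \V_h$; subtracting~\eqref{eq:discrete_form} produces the Galerkin orthogonality $a_h(\u - \u_h, \v_h) = 0$ for all $\v_h \in \V_h$, which I will use in both parts.

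For the energy estimate, set $\boldsymbol{\xi}_h \doteq \Pi_h^{\sz}(\u) - \u_h \in \V_h$. Coercivity of $a_h$ and the Galerkin orthogonality give $\normstab{\boldsymbol{\xi}_h}{2} \lesssim a_h(\boldsymbol{\xi}_h,\boldsymbol{\xi}_h) = a_h(\Pi_h^{\sz}(\u) - \u, \boldsymbol{\xi}_h)$, and since $\Pi_h^{\sz}(\u) - \u \in \V(h)$, continuity bounds the right-hand side by $\normstab{\u - \Pi_h^{\sz}(\u)}{} \, \normstab{\boldsymbol{\xi}_h}{}$; hence $\normstab{\boldsymbol{\xi}_h}{} \lesssim \normstab{\u - \Pi_h^{\sz}(\u)}{}$. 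A triangle inequality together with Proposition~\ref{prop:local-approx} then yields $\normstab{\u - \u_h}{} \lesssim \normstab{\u - \Pi_h^{\sz}(\u)}{} \lesssim h^{m-1} \seminorm{\u}{\H^m(\Omega^{\pm})}{}$.

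For the $\L^2$ estimate I would run the duality argument of~\cite[Theorem 6]{hansbo2002unfitted}. Put $\boldsymbol{e} \doteq \u - \u_h \in \V(h)$ and introduce the adjoint interface problem: find $\boldsymbol{z} \in \V$ with $-\bsnabla \cdot \s(\boldsymbol{z}) = \boldsymbol{e}$ in $\Omega^{\pm}$ and homogeneous transmission conditions $\jump{\boldsymbol{z}} = \boldsymbol{0}$, $\jump{\s(\boldsymbol{z})} \cdot \n^+ = \boldsymbol{0}$ on $\Gamma$. Because $\Omega$ is a convex polygon and $\Gamma$ is of class $\mathcal{C}^2$, this problem enjoys the same elliptic smoothing as the primal one (see~\cite{chen1998finite}), so $\boldsymbol{z} \in \H^2(\Omega^{\pm}) \cap \V$ with $\seminorm{\boldsymbol{z}}{\H^2(\Omega^{\pm})}{} \lesssim \norm{\boldsymbol{e}}{\L^2(\Omega)}{}$. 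The integration-by-parts computation in the proof of Lemma~\ref{lem:consistency} is in fact valid for any test function in $\V(h)$, not only in $\V_h$; applied to $\boldsymbol{z}$, whose interface data vanish, it gives $a_h(\boldsymbol{z},\v) = \int_{\Omega} \boldsymbol{e} \cdot \v \ \mathrm{d} \Omega$ for every $\v \in \V(h)$. Taking $\v = \boldsymbol{e}$, using the symmetry of $a_h$ and the Galerkin orthogonality, and then continuity, Proposition~\ref{prop:local-approx} with $m = 2$ and the energy estimate just obtained,
\[
\norm{\boldsymbol{e}}{\L^2(\Omega)}{2} = a_h(\boldsymbol{e},\boldsymbol{z}) = a_h(\boldsymbol{e}, \boldsymbol{z} - \Pi_h^{\sz}(\boldsymbol{z})) \lesssim \normstab{\boldsymbol{e}}{} \, \normstab{\boldsymbol{z} - \Pi_h^{\sz}(\boldsymbol{z})}{} \lesssim h^{m-1} \seminorm{\u}{\H^m(\Omega^{\pm})}{} \cdot h \, \seminorm{\boldsymbol{z}}{\H^2(\Omega^{\pm})}{}.
\]
Inserting the regularity bound $\seminorm{\boldsymbol{z}}{\H^2(\Omega^{\pm})}{} \lesssim \norm{\boldsymbol{e}}{\L^2(\Omega)}{}$ and cancelling one factor of $\norm{\boldsymbol{e}}{\L^2(\Omega)}{}$ gives $\norm{\u - \u_h}{\L^2(\Omega)}{} \lesssim h^{m} \seminorm{\u}{\H^m(\Omega^{\pm})}{}$.

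I expect the only genuinely delicate point to be the duality step, on two counts: obtaining $\H^2$ elliptic regularity for the adjoint problem --- which is precisely where convexity of $\Omega$ and $\mathcal{C}^2$-smoothness of $\Gamma$ enter, and where the geometrical discretisation error is tacitly neglected, exactly as in~\cite{chen1998finite,hansbo2002unfitted} --- and verifying that the consistency identity, stated in Lemma~\ref{lem:consistency} only for $\v_h \in \V_h$, extends to the full space $\V(h)$, so that it can be tested against the non-discrete error $\boldsymbol{e}$. Everything else is a mechanical combination of the coercivity, continuity and interpolation bounds that are already in place.
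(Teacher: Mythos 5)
Your argument is correct and is precisely the route the paper itself indicates (and only sketches) in the paragraph preceding the proposition: Galerkin orthogonality from Lemma~\ref{lem:consistency}, coercivity and continuity from Lemma~\ref{lem:wellposed} combined with Proposition~\ref{prop:local-approx} in a C\'ea-type argument for the $\V(h)$ bound, and the duality argument of~\cite[Theorem 6]{hansbo2002unfitted} with the elliptic regularity of~\cite{chen1998finite} for the $\L^2$ bound. You also correctly identify the two points the paper leaves implicit --- adjoint $\H^2$ regularity and the extension of the consistency identity to test functions in $\V(h)$ --- so nothing is missing.
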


\section{Numerical experiments}
\label{sec:num-exps}

Our goal, in this section, is to analyse numerically the accuracy, optimality,
robustness and performance of $h$-Ag\ac{fem} for interface elliptic \acp{bvp}.
We consider as model problems the Poisson and linear elasticity equations, with
non-homogeneous Dirichlet conditions on the external boundary and discretised
with the variational formulation detailed in Section~\ref{sec:dis-for}. We
describe first the experimental setup in Section~\ref{sec:exp-setup}, consisting
of several manufactured problems defined in a set of complex geometries. We lay
out next the experimental environment of the $h$-Ag\ac{fem} parallel
implementation in \fempar~\cite{badia-fempar}. After these preliminaries, we
move to report and discuss the numerical results of three different sets of
experiments: convergence tests in Section~\ref{sec:conv-tests}, material
contrast and cut location robustness tests in Section~\ref{sec:robustness} and,
finally, weak scaling tests in Section~\ref{sec:weak-scaling}.

\subsection{Experimental benchmarks}
\label{sec:exp-setup}

Numerical tests consider the variational formulation of
Section~\ref{sec:dis-for}, with non-homogeneous Dirichlet boundary conditions,
applied to the Poisson and linear elasticity problems. Although exposition was
restricted to linear elasticity, the formulation for the Poisson equation can be
easily derived, as a particular case. This leads to an analogous formulation to
the ones in~\cite{hansbo2002unfitted,burman_cutfem_2015,huang2017unfitted}. We
observe that, with little effort, the Poisson equation inherits well-posedness
and approximability results proven in Section~\ref{sec:approx}. Moreover,
harmonic weights become $w^+ = \frac{k^-}{k^+ + k^-}$ and $w^- = \frac{k^+}{k^+
+ k^-}$, where $k^\alpha > 0$, $\alpha \in \{+,-\}$, represents the
subdomain-wise constant diffusion coefficient.

Numerical experiments are carried out on both serial and parallel,
distributed-memory, environments. We generally report parallel results; serial
ones are only shown when informing about condition numbers. We also observe that
parallelisation of interface Ag\ac{fem} basically reuses ideas that have already
been covered in~\cite{Verdugo2019}. In addition, all examples run on background
Cartesian grids, endowed with standard isotropic 1:4 (2D) or 1:8 (3D) refinement
rules; also known as \emph{quadtrees} (2D) or \emph{octrees}
(3D)~\cite{Badia2019b}. We have also addressed in~\cite{inpreparation2020} how
to build Ag\ac{fe} spaces on top of these (generally) nonconforming meshes. In
the experiments, we consider both uniform and $h$-adaptive refinements. The
latter follow an iterative \ac{amr} process~\cite{inpreparation2020} that
exploits the Li and Bettess convergence (or acceptability)
criterion~\cite{li1995theoretical,diez1999unified}. As usual, the goal of the
procedure is to find an optimal mesh, that minimises the number of cells
required to achieve a given discretisation error. Nonetheless, we remark that
remeshing is not driven by \emph{a posterior} error estimation, since we can
compute the exact error in all cases studied, and we do not consider the
geometrical error in approximating the interface. In contrast
to~\cite{inpreparation2020}, we use the \emph{relative} energy norm error in the
acceptability criterion to eliminate the influence of material contrast. Seeking 
to ensure stability, without superfluous aggregation, that degrades accuracy 
and conditioning~\cite{inpreparation2020}, the well-posedness threshold $\eta_0$ 
to isolate badly-cut cells is set to $0.25$.

The \ac{fe} approximation space for all experiments is $\V_h$, described in
Section~\ref{sec:dis-for}, as the single-interface version of the general
$n$-interface $\V_{h,\ag}$ in Section~\ref{sec:ag-fe-space}. Henceforth, we
refer to $\V_h$ simply as the Ag\ac{fe} space. We employ both first and second
order Lagrangian finite elements. Following discussion in
Section~\ref{sec:dis-for}, the coercivity coefficient is given by $\beta = 10.0
\ q^2$, where $q$ is the \ac{fe} order; this value is enough to ensure
well-posedness for all the tests below. Apart from that, robustness tests, in
Section~\ref{sec:robustness}, additionally consider a \emph{standard} \ac{fe}
(or Std\ac{fe}) space defined by $\V_h^\std \doteq \V_{h,\act}^+ \times
\V_{h,\act}^-$. Although $\V_h^\std$ is stable to cut location, under suitable
mesh and interface regularity conditions~\cite{hansbo2002unfitted}, it leads to
much more ill-conditioned systems than the Ag\ac{fe} space~\cite{DePrenter2017}.
For this reason, usage of Std\ac{fe} space is merely intended to provide a
numerical reference to assess the condition number of Ag\ac{fe} space. When
using the Std\ac{fe} space, the $\beta$ coefficient at each (well- or ill-posed)
cut cell is computed by solving a generalised eigenvalue problem, detailed
in~\cite[Section 4.2]{Badia2018}.

The physical domain in all cases is a cuboid (of varying sizes), but the physical
interface dividing the two phases is a non-trivial surface, described as the
0-level set of a (piecewise-)smooth function. We consider eight different
level-set interfaces: (a) a circle, (b) a flower and (c) a "pacman" shape, in 2D;
(d) a cylinder, (e) a popcorn flake, (f) a spiral, (g) a popcorn flake without a
wedge (popcorn pacman) and (h) a gyroid, in 3D. All these geometries are covered
in the literature~\cite{burman_cutfem_2015,lehrenfeld2016high}; they are typically
chosen to examine the behaviour of unfitted \ac{fe} methods. \myadded{We consider
linear approximations of the embedded interfaces; in the numerical results, the
geometrical error does not affect \emph{global} optimality (of quadratic
\acp{fe}). However, in general, high-order geometrical approximations of the
interface are required in order to retain optimality of Ag\ac{fem} with high-order
\acp{fe}.} For illustration purposes, descriptive figures of the considered
interfaces (or the interior region that they enclose) are drawn along the
convergence plots of Section~\ref{sec:conv-tests}. Besides, the geometry for the
gyroid problem is represented in Figure~\ref{fig:gyroid}.

\begin{figure}[!h]
	\centering
	\begin{subfigure}[t]{0.32\textwidth}
		\includegraphics[width=\textwidth]{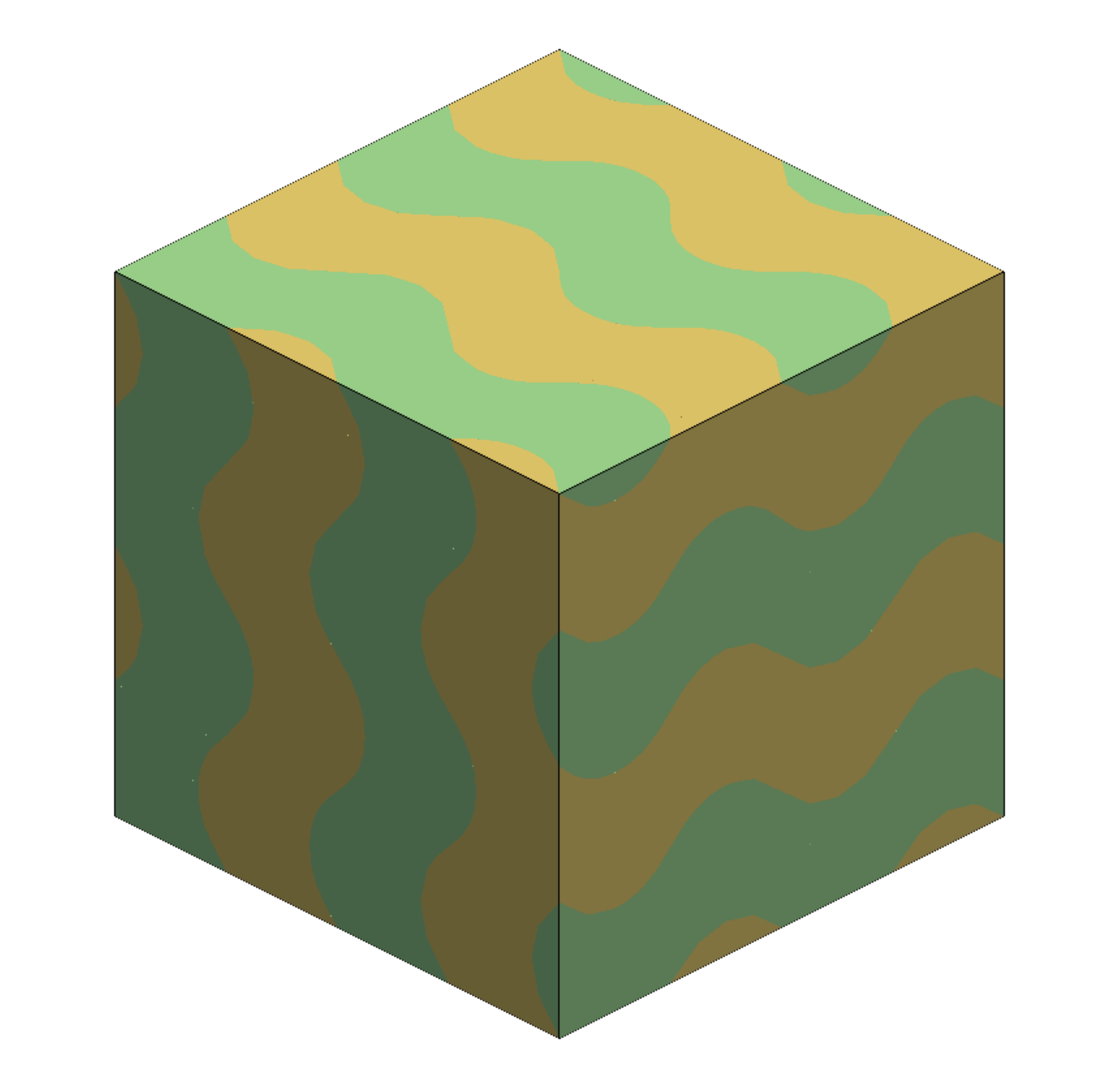}
		\caption{}
	\end{subfigure}
	\begin{subfigure}[t]{0.32\textwidth}
		\includegraphics[width=\textwidth]{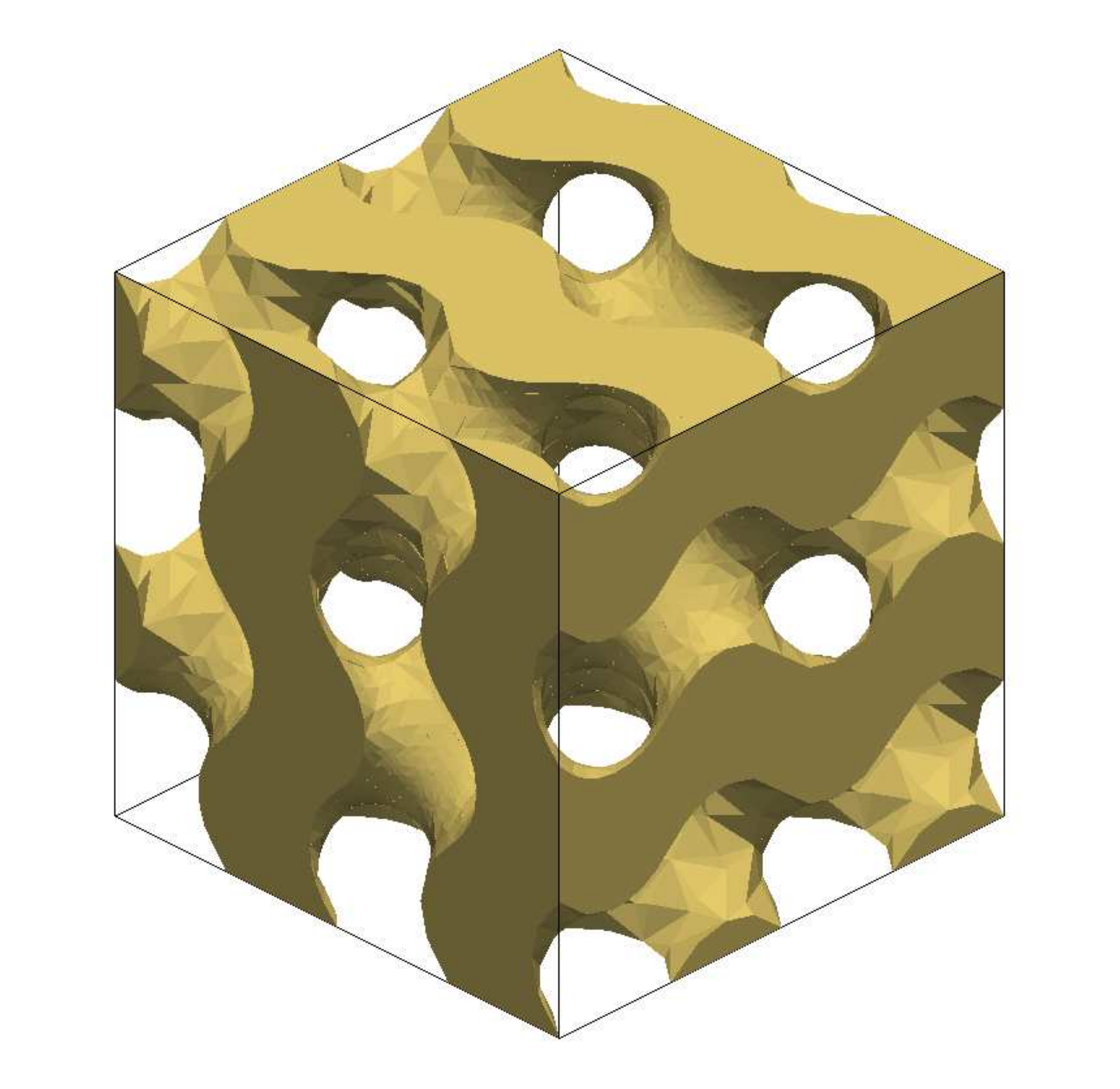}
		\caption{}
	\end{subfigure}
	\begin{subfigure}[t]{0.32\textwidth}
		\includegraphics[width=\textwidth]{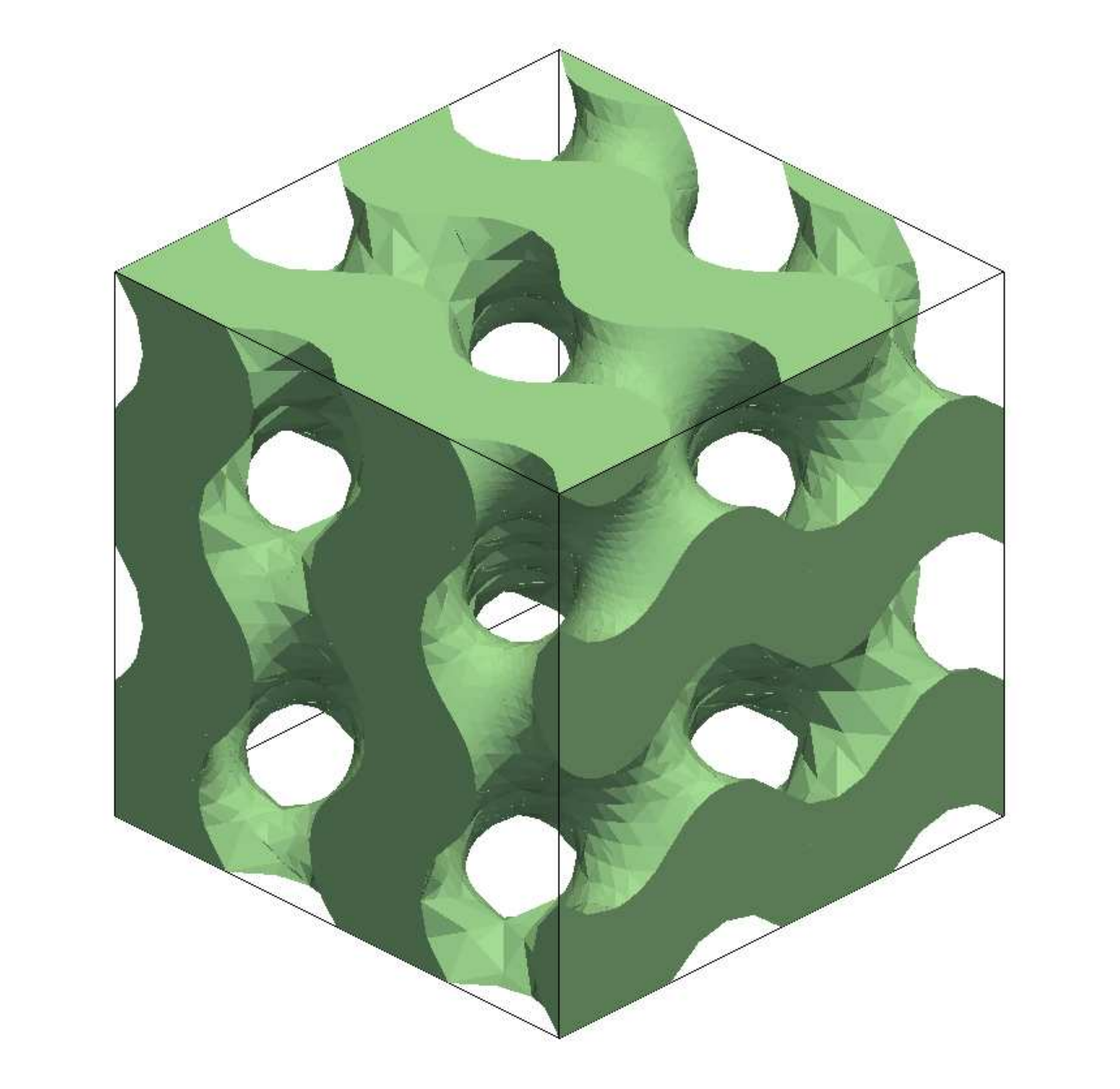}
		\caption{}
	\end{subfigure} \\
	\begin{subfigure}[t]{0.32\textwidth}
		\includegraphics[width=\textwidth]{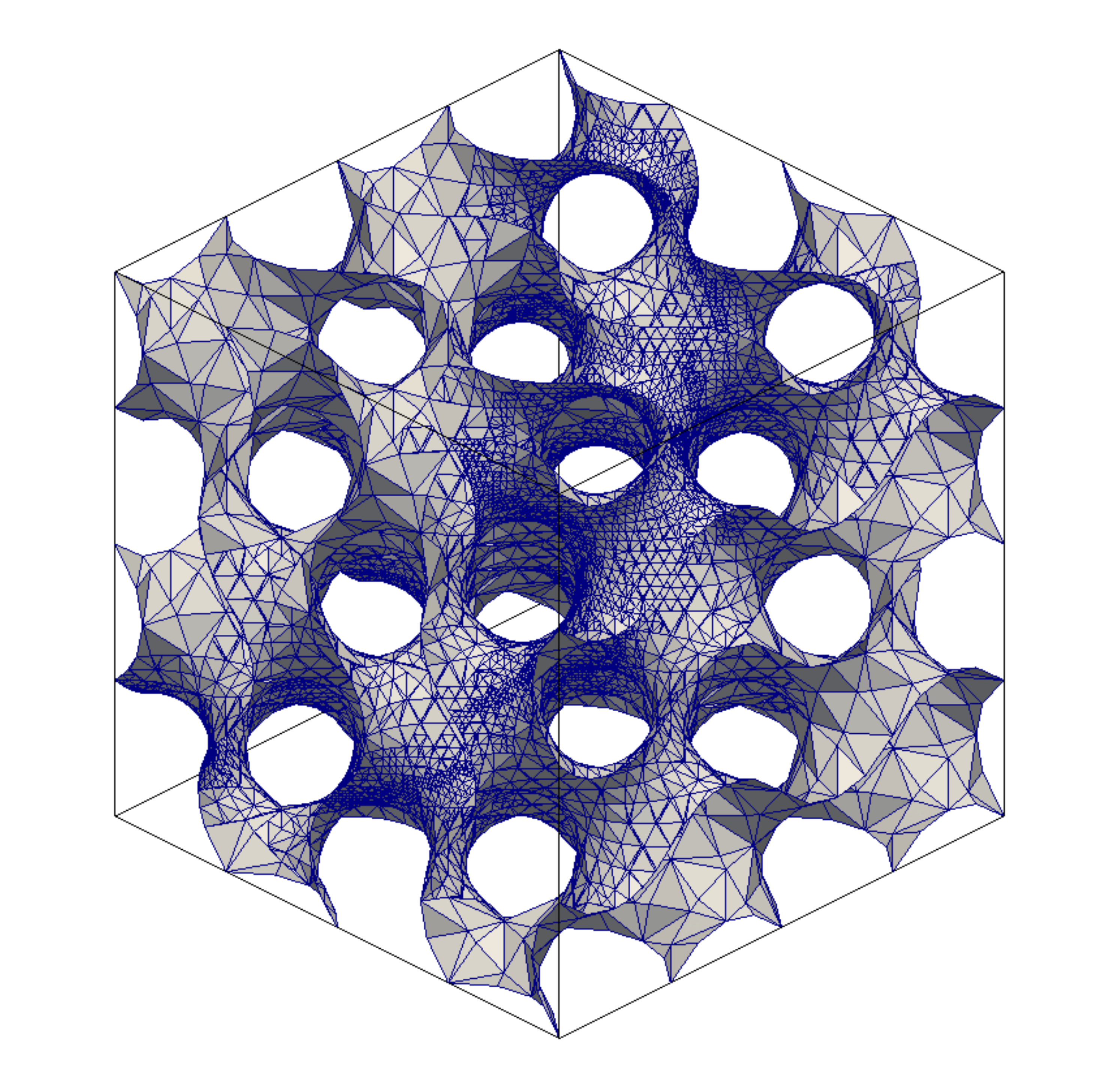}
		\caption{}\label{fig:gyroid-int}
	\end{subfigure}
	\begin{subfigure}[t]{0.32\textwidth}
		\includegraphics[width=\textwidth]{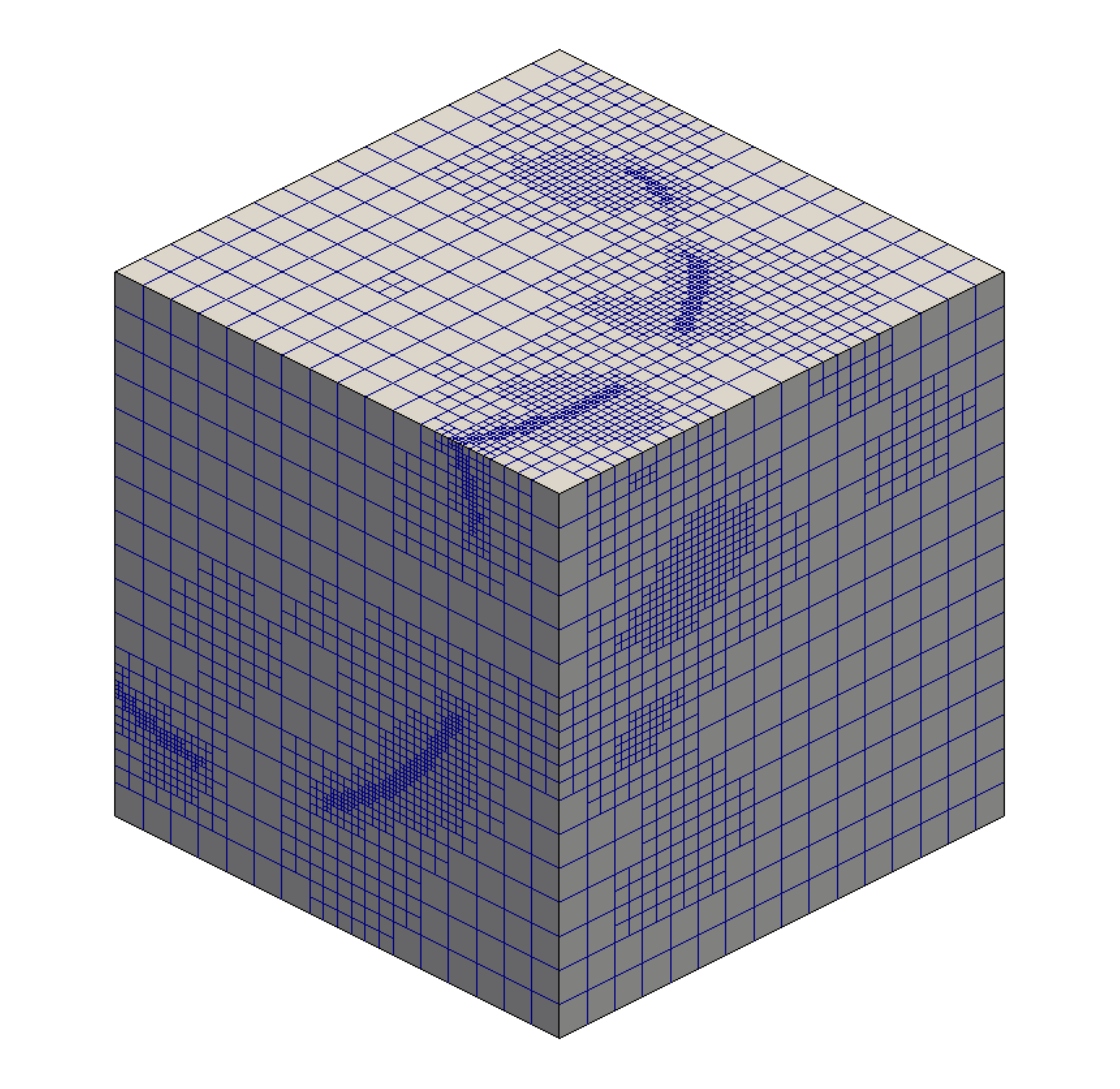}
		\caption{}\label{fig:gyroid-mesh}
	\end{subfigure}
	\begin{subfigure}[t]{0.32\textwidth}
		\includegraphics[width=\textwidth]{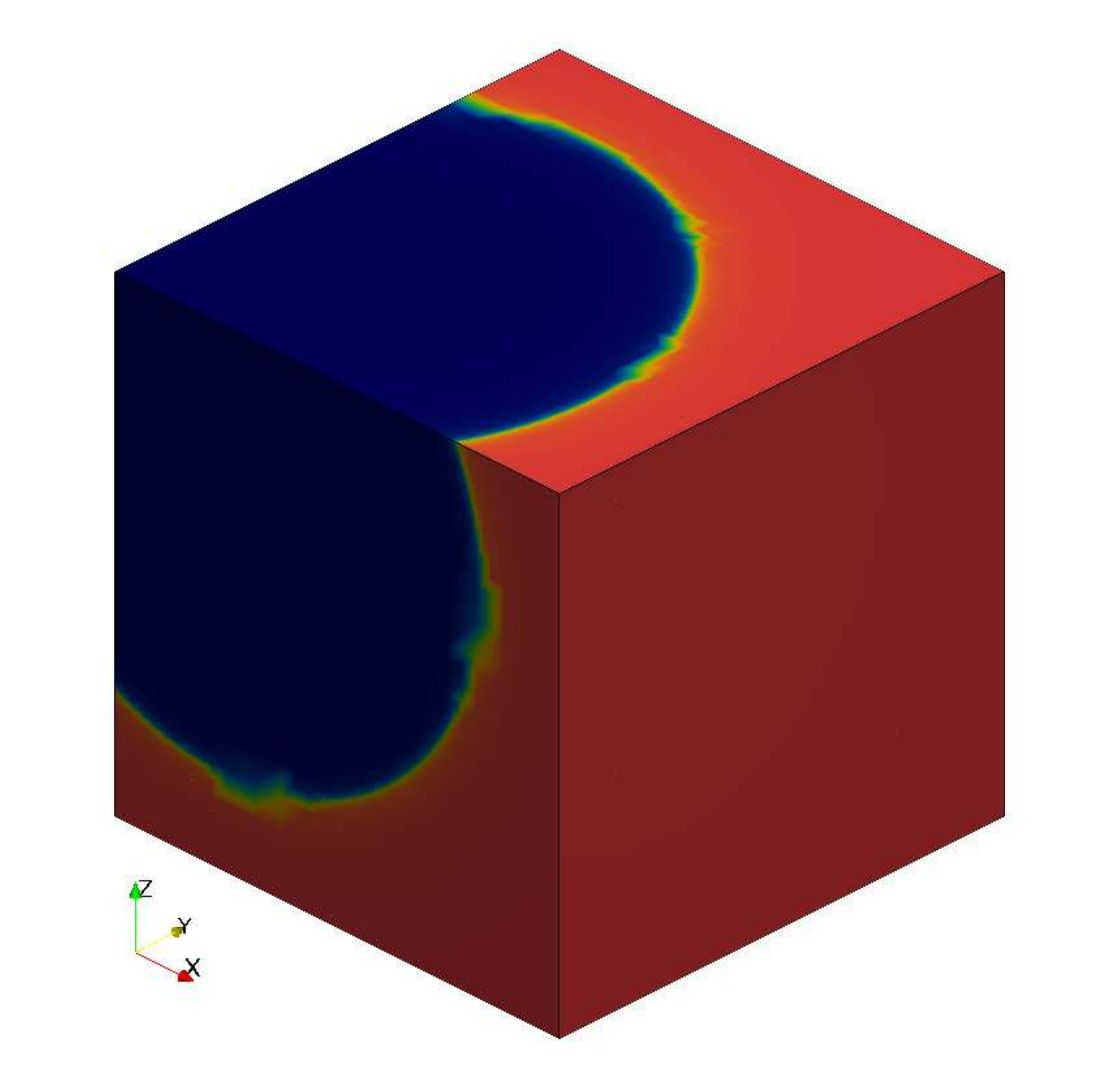}
		\caption{}\label{fig:gyroid-sol}
	\end{subfigure}
	\caption[The gyroid interface and single-shock benchmark]{The gyroid
	interface and single-shock benchmark. The top three figures represent the
	two regions (together and one-by-one) divided by the gyroid level-set
	function on the region $[-2,2]^3$. The bottom three figures represent the
	mesh and solution of the single-shock equation~\eqref{eq:single-shock} with
	$k^+/k^- \neq 1$ on a given $h$-adaptive mesh: the discrete
	approximated interface in Figure~\ref{fig:gyroid-int}, the mesh in
	Figure~\ref{fig:gyroid-mesh} and the solution in
	Figure~\ref{fig:gyroid-sol}. Different mesh resolution is due to dependency
	of the energy norm error on material contrast.}
	\label{fig:gyroid}
\end{figure}

We study four different analytical benchmarks; all of them are derived with the
so-called method of manufactured solutions~\cite{roache2002code}, i.e.~we
propose a solution of the problem with known analytical solution and then we
compute source term and interface conditions from the governing
equations~\eqref{eq:strong_form}. For the Poisson problem we consider a
benchmark for verification (convergence tests), namely the (1)
\emph{out-FE-space} benchmark. We add two more Poisson benchmarks, that
correspond to adapted versions of two classical $hp$-\ac{fem} problems, the (2)
\emph{Fichera-corner} and (3) \emph{single-shock} problems. For linear
elasticity, we address the (4) \emph{cylindrical inclusion} problem
in~\cite{sukumar_modeling_2001}. Let us next provide the analytical expressions
of the solution function for each case.

\begin{itemize}
	\item The \emph{out-FE-space} benchmark is adapted
	from~\cite{annavarapu2012robust} and applied to several interface
	geometries. The solution is given by $u(q, \mathbf{x}): \Omega \subset
	\mathbb{R}^d \to \mathbb{R}$ and $q \in \mathbb{N}$ such that
	\begin{equation}\label{eq:out-fe-space}
		u(q;\mathbf{x}) \doteq \begin{cases}
			\frac{k^+ - k^- + (3 k^- + k^+) x}{4 k^+ 
			(k^- + k^+)} - \frac{x^{q+1}}{(q+1) k^+}, &\text{if } 
			\mathbf{x} \in \Omega^+, \\
			\frac{(3 k^- + k^+) x}{4 k^- (k^- + k^+)} - 
			\frac{x^{q+1}}{(q+1) k^-}, &\text{if } \mathbf{x} \in \Omega^-.
		\end{cases}
	\end{equation}
	In our case, we take $q$ as the \ac{fe} interpolation order, then $u \notin
	\V_h$. Moreover, $u$ is discontinuous across $\Gamma$, but the jump of
	normal fluxes is null, i.e.~$\jump{k \bsnabla u} \cdot \n^+ = 0$.	
	\item The \emph{Fichera-corner} benchmark is adapted
	from~\cite{demkowicz2006computing} and applied to the pacman and
	popcorn-pacman interface shapes. The solution $u(r,\theta,z): \Omega \subset
	\mathbb{R}^d \to \mathbb{R}$ in cylindrical coordinates is
	\begin{equation}\label{eq:fichera}
		u^\alpha(r,\theta,z) \doteq r ^ {\omega^\alpha} \sin \omega^\alpha \theta, 
		\ \alpha \in \{+,-\}, \ \omega^- = 2 / 3, \ \omega^+ = 4.
	\end{equation}
	Numerical solution of~\eqref{eq:fichera} in the popcorn flake without a
	wedge is represented in Figure~\ref{fig:pacman}. We observe that the problem
	has fully non-homogeneous interface conditions. Furthermore, $u^+$ is smooth,
	whereas derivatives of $u^-$ are singular at the $r = 0$ axis; specifically,
	$u^- \in H^{1+\frac{2}{3}}(\Omega^-)$. When \emph{only} approximating $u^-$,
	convergence rates of the energy norm with uniform refinements are limited by
	regularity; they decrease at a rate $\mathcal{O}(h^{2/3})$. Optimal
	convergence rates can be restored with
	$h$-adaptivity~\cite{demkowicz2006computing}. In
	Section~\ref{sec:conv-tests}, we argue that, even though $u$ does not
	explicitly depend on the diffusion coefficients, material contrast
	determines whether convergence behaviour of $u$ (in the energy norm) is
	dictated by regularity of $u^+$ or $u^-$.
\end{itemize}

\begin{figure}[!h]
	\centering
	\begin{subfigure}[t]{0.24\textwidth}
		\includegraphics[width=\textwidth]{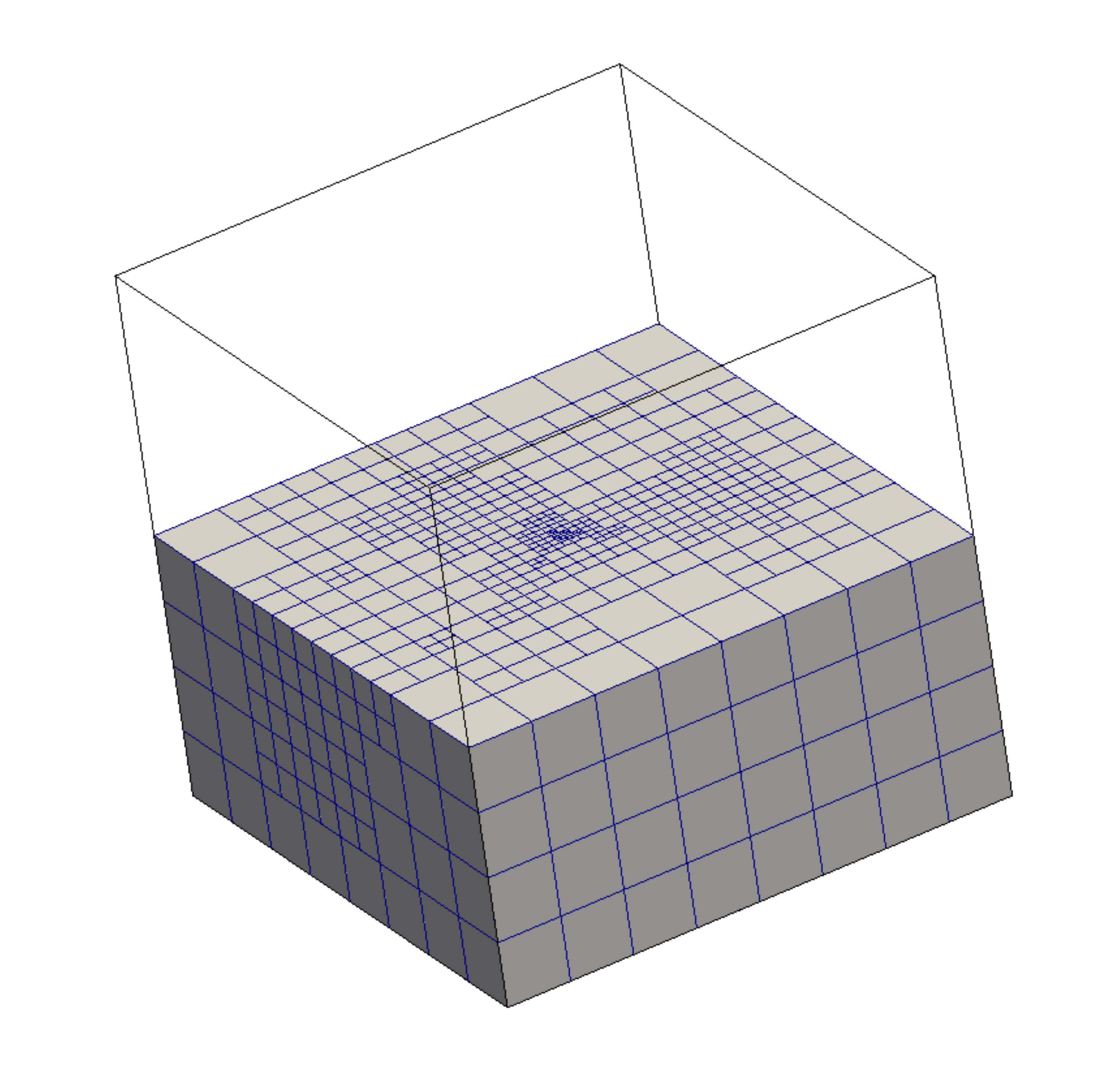}
		\caption{$k^+/k^- = 1$}
	\end{subfigure}
	\begin{subfigure}[t]{0.24\textwidth}
		\includegraphics[width=\textwidth]{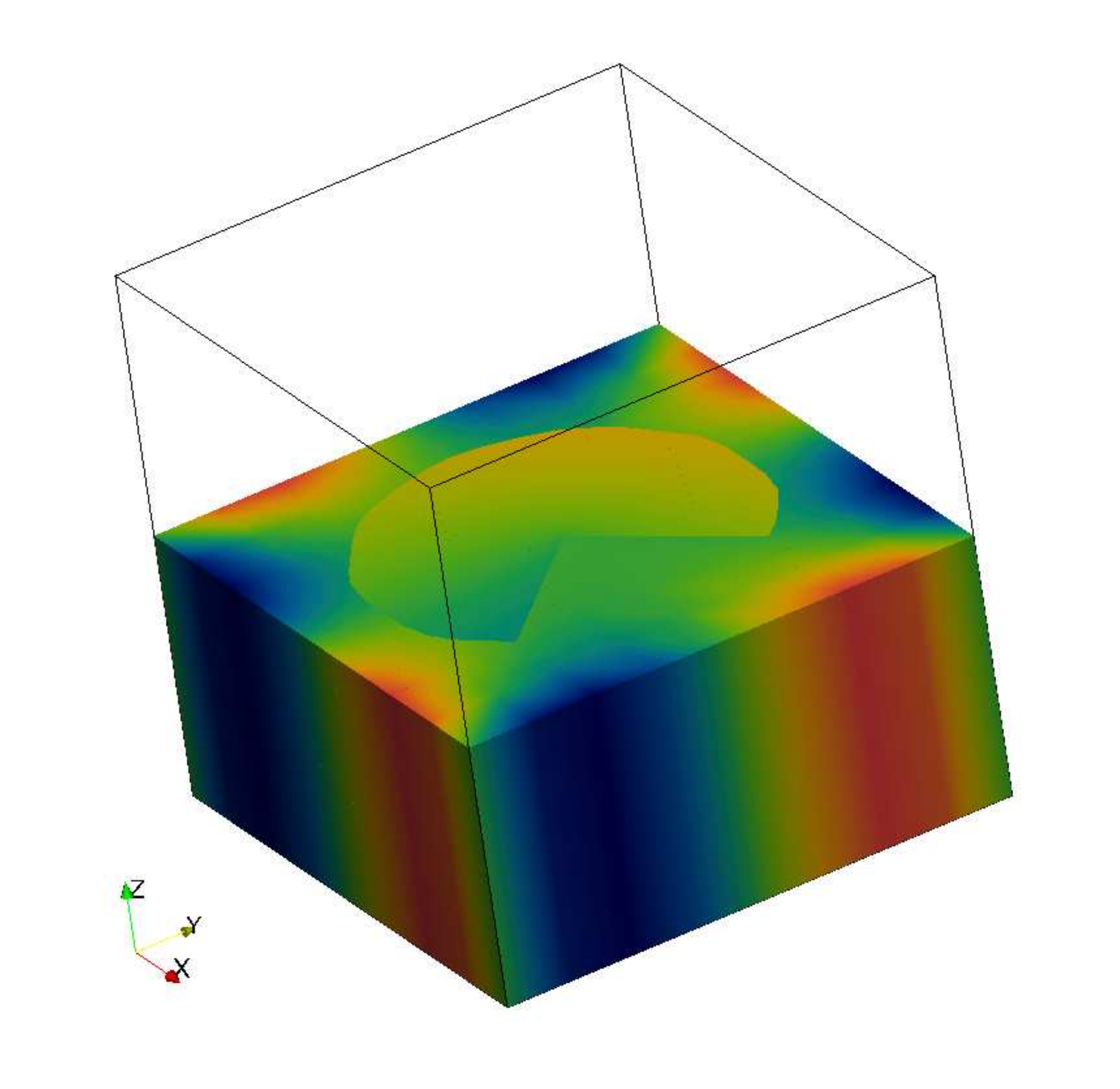}
		\caption{$k^+/k^- = 1$}
	\end{subfigure}
	\begin{subfigure}[t]{0.24\textwidth}
		\includegraphics[width=\textwidth]{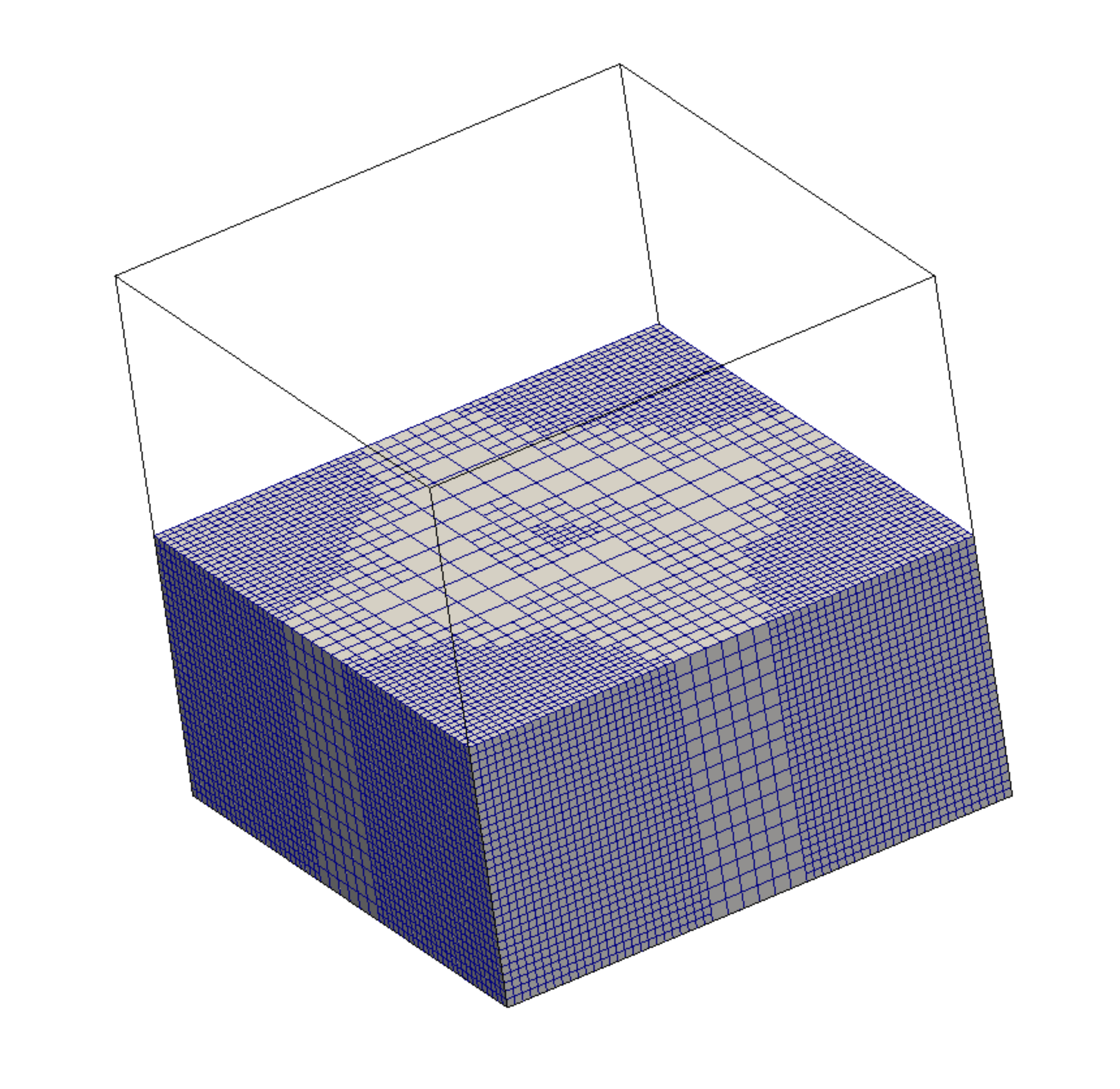}
		\caption{$k^+/k^- = 10^6$}
	\end{subfigure}
	\begin{subfigure}[t]{0.24\textwidth}
		\includegraphics[width=\textwidth]{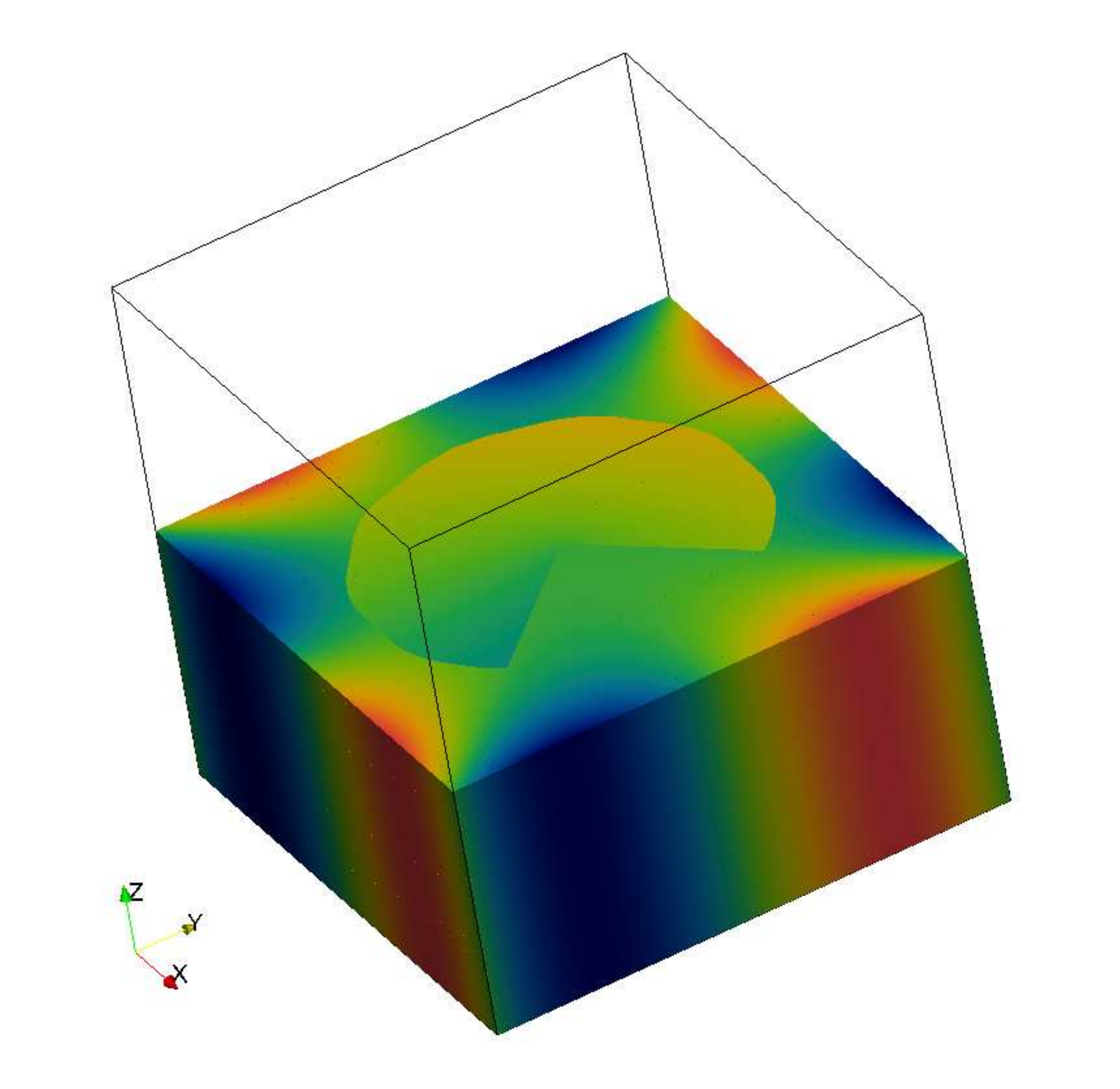}
		\caption{$k^+/k^- = 10^6$}
	\end{subfigure}
	\caption[The Fichera-corner benchmark~\eqref{eq:fichera} on the
	popcorn-pacman interface]{The Fichera-corner benchmark~\eqref{eq:fichera} on
	the popcorn-pacman interface in two different situations. We only show mesh
	and solution at the bottom half of the simulated cube, to show the results
	on the $z = 0$ plane. Material contrast determines which of the solution
	sides dominate the numerical error. In the two left plots,
	$k^+/k^- = 1$ leads to a situation where error and, thus,
	refinements concentrate in $\Omega^-$. Conversely, in the two right plots,
	$k^+/k^- = 10^6$ yields higher errors and mesh refinements in
	$\Omega^+$.}
	\label{fig:pacman}
\end{figure}

\begin{itemize}
	\item The \emph{single-shock} benchmark is also adapted
	from~\cite{demkowicz2006computing} and applied to the gyroid interface. The
	solution $u(r): \Omega \subset \mathbb{R}^d \to \mathbb{R}$ is
	\begin{equation}\label{eq:single-shock}
		u(r) \doteq \arctan( \tau ( r - r_0 ) ), \ \tau = 60, \ r = \| 
		\mathbf{x} - \mathbf{x}_0 \|_2, \ r_0 = 2.5, \ \mathbf{x}_0 = 
		(x_0, y_0, z_0) = (-1,-1,1),
	\end{equation}
	where $\| \cdot \|_2$ denotes the Euclidean norm. Numerical solution
	of~\eqref{eq:single-shock} in the gyroid is represented in
	Figure~\ref{fig:gyroid-sol}. As in the previous
	benchmark~\eqref{eq:fichera}, the analytical solution does not depend on the
	material parameters, but numerical error (in the energy norm) does. Apart
	from that, $u$ is smooth in $\Omega$, although it sharply varies in the
	neighbourhood of the shock, and continuous across $\Gamma$, although with a
	kink if $k^+ \neq k^-$. We notice that the shock may intersect
	$\Gamma$, e.g.~it crosses several times the gyroid 0-level set.
	\item The \emph{cylindrical inclusion} benchmark is applied to a cylindrical
	interface. It adapts the linear elasticity problem in~\cite[Section
	7.3]{sukumar_modeling_2001}. The displacement in cylindrical coordinates is
	given by:
	\begin{equation}\label{eq:sukumar}
		u_r(r) \doteq \begin{cases}
			\left[ \left( 1 - \frac{b^2}{a^2} \right) c + \frac{b^2}{a^2} 
			\right] r, &0 \leq r < a, \vspace{0.1cm} \\
			\left( r - \frac{b^2}{r} \right) c + \frac{b^2}{r}, &a \leq r 
			\leq b.
		\end{cases}, \quad u_\theta \equiv 0, \quad u_z \equiv 0,
	\end{equation}
	where $a = 0.4$, $b = 2.0$ and
	\[
		c = \frac{ ( \lambda_- + \mu_- + \mu_+ ) b^2 }{ ( \lambda_+ +
		\mu_+ ) a^2 + ( \lambda_- + \mu_- ) ( b^2 - a^2 ) + \mu_+ b^2 }.
	\]
	In the experiments, $\Omega \subset \{ 0 \leq r < b \}$ and $\Omega^- = \{ 0
	\leq r < a \}$. The numerical solution is represented in
	Figure~\ref{fig:sukumar}. As in~\eqref{eq:single-shock}, $u$ is continuous
	across $\Gamma$, but it has a kink if material properties are discontinuous.
\end{itemize}

\begin{figure}[!h]
	\centering
	\begin{subfigure}[t]{0.24\textwidth}
		\includegraphics[width=\textwidth]{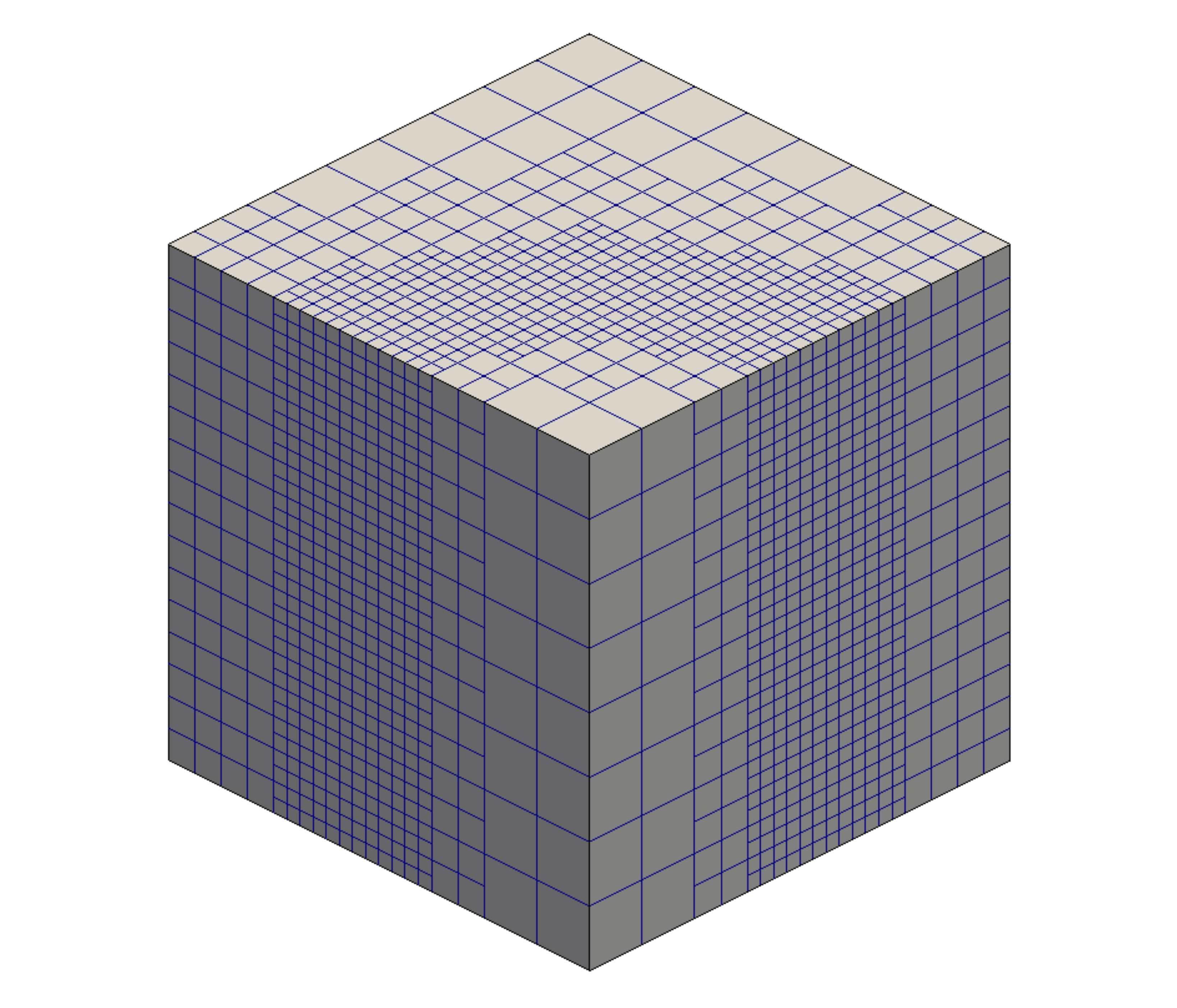}
		\caption{}
	\end{subfigure}
	\begin{subfigure}[t]{0.24\textwidth}
		\includegraphics[width=\textwidth]{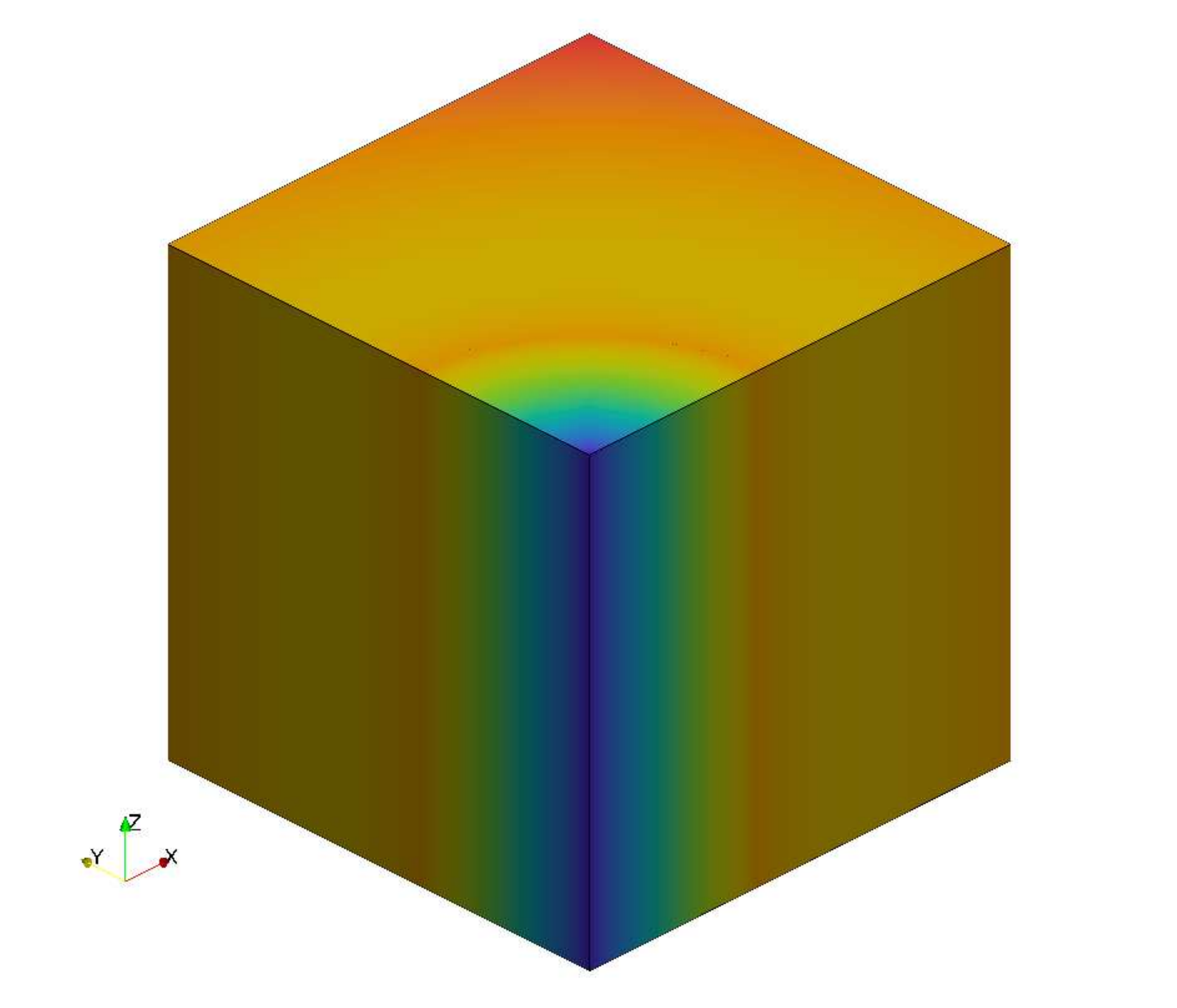}
		\caption{}
	\end{subfigure}
	\caption[Error-driven adaptive mesh and solution of the linear elasticity
	problem~\eqref{eq:sukumar} on the cylinder for $\mu_+/\mu_- \neq
	1$]{Error-driven adaptive mesh and solution of the linear elasticity
	problem~\eqref{eq:sukumar} on the cylinder for $\mu_+/\mu_- \neq 1$. The
	solution has a kink along the interface and error concentrates at the side
	of the interface outside the cylinder.}
	\label{fig:sukumar}
\end{figure}

Table~\ref{tab:params} summarizes the main parameters and computational strategies
used in the numerical examples. The variety of complex shapes and benchmarks
considered above are intended to exhibit the good behaviour of interface
Ag\ac{fem}, in as many situations as possible. Our numerical tests consider first
numerical verification of the theoretical results proved in
Section~\ref{sec:dis-for}. In Section~\ref{sec:conv-tests}, we carry out
convergence tests in uniform and $h$-adaptive meshes to show that interface
Ag\ac{fem} recovers optimal convergence rates. Afterwards, we examine, in
Section~\ref{sec:robustness}, robustness w.r.t.~cut location and material
contrast, by means of geometry and material perturbations. We show that the
condition number of the linear system, after diagonal scaling, is independent of
cut location and material contrast. Finally, in Section~\ref{sec:weak-scaling}, we
assess good parallel performance and scalability with a weak-scaling analysis of
some selected cases from the convergence tests. For each type of numerical test,
we perform a subset of the possible matrix of cases in Table~\ref{tab:params}. We
provide details for each subset, when dealing with the corresponding test. But
before all that, we inform next about the computational infrastructure and
software employed.

\begin{table}[ht!]
	\centering
	\begin{small}
		\begin{tabular}{ll}
			\toprule
			Description & Considered methods/values \\
			\midrule
			Model problem & interface Poisson, interface linear elasticity
			\vspace{0.12cm} \\
			Problem geometry & 2D: circle, flower, pacman shape; 3D: cylinder,
			\\ & popcorn flake, spiral, popcorn pacman, gyroid \vspace{0.12cm}
			\\ Benchmark & out-FE-space~\eqref{eq:out-fe-space}, Fichera
			corner~\eqref{eq:fichera}, \\ & single 
			shock~\eqref{eq:single-shock}, cylindrical 
			inclusion~\eqref{eq:sukumar} \vspace{0.12cm} \\
			Experimental computer environment & serial and parallel
			\vspace{0.12cm} \\
			Parallel mesh generation and partitioning tool & \p4est{}
			library~\cite{burstedde_p4est_2011} \vspace{0.12cm} \\
			Mesh topology & single quadtree (2D) or octree (3D) \vspace{0.12cm}
			\\
			Remeshing strategy & uniform, $h$-adaptive with Li and
			Bettess~\cite{li1995theoretical} criterion \vspace{0.12cm} \\
			Well-posed cut cell criterion & $\eta_0 = 0.25$ \vspace{0.12cm} \\
			\ac{fe} spaces & Ag\ac{fe} and Std\ac{fe} \vspace{0.12cm} \\
			Cell type and \ac{fe} interpolation & Q1 and Q2 hexahedral cells \vspace{0.12cm} \\
			Linear solver & sparse direct (serial) \\
			& preconditioned conjugate gradients (parallel) \vspace{0.12cm} \\
			Parallel preconditioner & smoothed-aggregation
			\gamg{}~\cite{GAMGweb} \vspace{0.12cm} \\
			\gamg{} stopping criterion & $\| \mathbf{r} \|_2/ \| \mathbf{b} \|_2
			< 10^{-9}$ \vspace{0.12cm} \\
			Weights in averaged normal fluxes & $w^+ = \frac{k^-}{k^+ + k^-}$
			and $w^- = \frac{k^+}{k^+ + k^-}$ (Poisson) \vspace{0.02cm} \\
			& $w^+ = \frac{\mu^-}{\mu_+ + \mu_-}$ and $w^- = \frac{\mu^+}{\mu_+ 
			+ \mu_-}$ (elasticity) \vspace{0.12cm} \\
			Coef. in Nitsche's penalty term for Ag\ac{fem} & $\beta = 10.0 \ 
			q^2$, $q$ is the \ac{fe} interpolation order \\
			\bottomrule
		\end{tabular}
	\end{small}
	\caption{Summary of main parameters and computational strategies used in
	the numerical examples}
	\label{tab:params}
\end{table}

\subsection{Experimental environment}
\label{sec:exp-env}

Serial experiments are launched at the TITANI cluster of the Universitat
Polit\`{e}cnica de Catalunya (Barcelona, Spain), whereas parallel experiments
are carried out at the Marenostrum-IV (MN-IV) supercomputer, hosted by the
Barcelona Supercomputing Centre. A \ac{mpi} parallel implementation of the
interface $h$-Ag\ac{fem} method is available at \fempar{}~\cite{badia-fempar}.
\fempar{} is linked against \p4est{} v2.2~\cite{burstedde_p4est_2011}, as the
octree Cartesian grid manipulation engine, and \petsc{}
v3.11.1~\cite{petsc-user-ref} distributed-memory linear algebra data structures
and solvers. Besides, condition number estimates are computed outside
\fempar{} with \matlab{} function
\textit{condest}.\footnote{MATLAB is a trademark of THE MATHWORKS INC.}  



Concerning linear solvers, a sparse direct solver from the MKL PARDISO
package~\cite*{_intel_????} is employed for serial tests. In contrast, a
preconditioned \ac{cgm} method is adopted for parallel tests. The selected
preconditioner is a smoothed-aggregation \ac{amg} scheme called
\gamg{}~\cite{GAMGweb}. The linear solver is set up as in~\cite{Verdugo2019},
with the aim of reducing, as much as possible, deviation from the default
configuration given by \gamg{}. In order to advance the convergence test down to
low global energy-norm error values, without being polluted by linear solver
accuracy, convergence of \gamg{} is declared when $\| \mathbf{r} \|_2/ \|
\mathbf{b} \|_2 < 10^{-9}$ within the first $500$ iterations, where
$\mathbf{r}\doteq \mathbf{b}-\mathbf{A}\mathbf{x}^{\rm cg}$ is the
unpreconditioned residual. Both (serial and parallel) solvers and preconditioner
are readily available through the Krylov Methods \texttt{KSP} module of
\petsc{}.

\subsection{Convergence tests}
\label{sec:conv-tests}

We study the convergence of interface Ag\ac{fem} in two stages. In the first
one, we choose benchmark~\eqref{eq:out-fe-space} and examine the rate at which
the relative energy norm error decays with uniform mesh refinements. In a second
stage, we consider the remaining benchmarks and observe the behaviour for both
uniform and error-driven $h$-adaptive mesh refinements. All experiments run on
five MN-IV nodes, i.e.~we use a total of 240 CPUs, with each CPU mapped to a
different \ac{mpi} task.

For the first part, we consider the circle, flower, popcorn and spiral interface
geometries. In the first three cases, the level sets are centred at the origin
of coordinates and the physical domain is the unit $[0,1]^3$ cube, while the
physical domain of the spiral case is the $[-1,1]^2 \times [0,2]$ cuboid. In all
cases, the interface cuts the external boundary. Besides, the circle has radius
$0.7$ and the flower level-set function in polar coordinates is
$\varphi(r,\theta) = r - 0.7 ( 1 + 0.3 \sin(5\theta) )$. We refer
to~\cite{burman_cutfem_2015,Badia2018a} for the remaining level-set function
expressions. The cuboid is initially meshed with a uniform Cartesian grid.
Figure~\ref{fig:verif_unif} gathers all convergence tests on uniform meshes for
problem~\eqref{eq:out-fe-space}. In agreement to
Proposition~\ref{prop:a-priori-V}, we observe that Ag\ac{fem} consistently
recovers optimal convergence rates in the $H^1$-seminorm (equivalent to the
energy norm) for all cases considered, including first and second order
interpolations and extreme material contrasts.

\begin{figure}[!h]
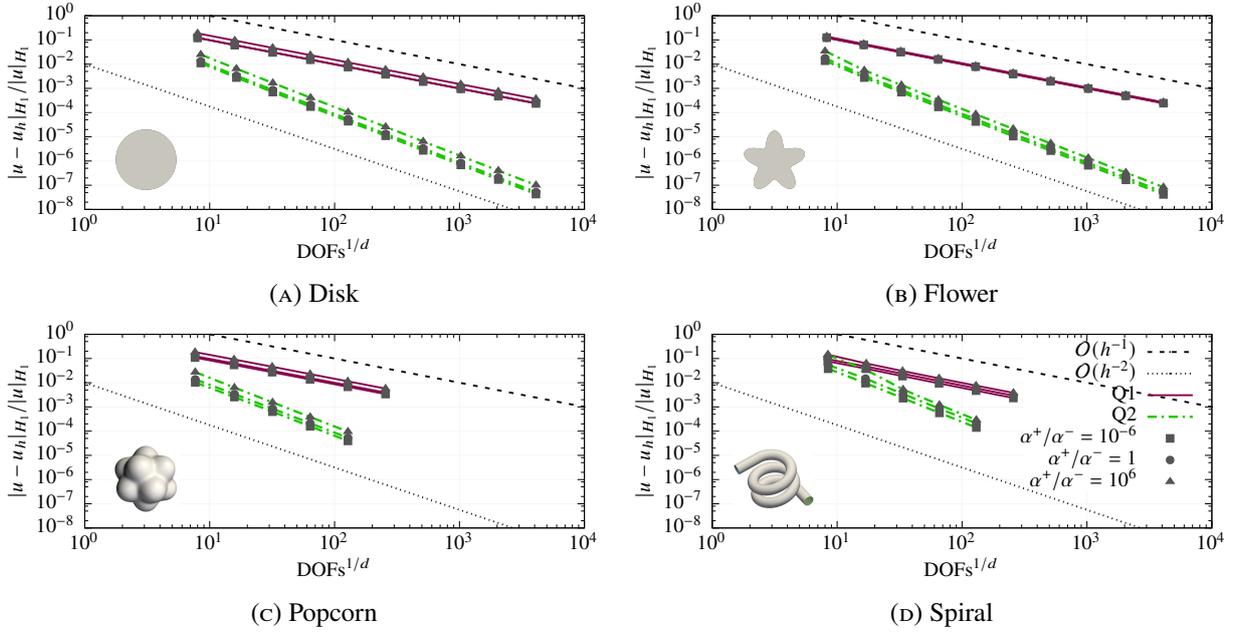

	\centering
	\begin{subfigure}[t]{0.48\textwidth}
		\scalebox{0.7}{\input{figures/c0d1e0f0g0h0i1.tex}}
		\caption{Disk}
	\end{subfigure}
	\begin{subfigure}[t]{0.48\textwidth}
		\scalebox{0.7}{\input{figures/c1d1e0f0g0h0i1.tex}}
		\caption{Flower}
	\end{subfigure} \\
	\begin{subfigure}[t]{0.48\textwidth}
		\scalebox{0.7}{\input{figures/c2d1e0f0g0h0i1.tex}}
		\caption{Popcorn}
	\end{subfigure}
	\begin{subfigure}[t]{0.48\textwidth}
		\scalebox{0.7}{\input{figures/c3d1e0f0g0h0i1.tex}}
		\caption{Spiral}
	\end{subfigure}
	\caption[Convergence tests on uniform meshes]{Convergence tests on uniform
	meshes: For benchmark~\eqref{eq:out-fe-space} and an initial uniform mesh,
	Ag\ac{fem} consistently shows optimal convergence rates as the mesh is
	uniformly refined.}
	\label{fig:verif_unif}
\end{figure}

For tests with uniform and $h$-adaptive mesh refinements, we consider (a) the
Fichera-corner~\eqref{eq:fichera} on the pacman (2D) and popcorn-pacman (3D)
shapes, (b) the single-shock~\eqref{eq:single-shock} on the gyroid and (c) the
cylindrical inclusion~\eqref{eq:sukumar} on a cylinder. The physical domains are
$[0,1]^d$, $[-2,2]^3$ and $[0,1]^3$, resp. Geometry and numerical solutions for
each case are represented in Figures~\ref{fig:pacman},~\ref{fig:gyroid-sol}
and~\ref{fig:sukumar}. We note that, in (a) the interface is in the interior of
$\Omega$, while in (c) we exploit radial symmetry. \myadded{We recall that the
\ac{amr} process is driven by computing the exact discretisation error and the Li
and Bettess convergence criterion~\cite{li1995theoretical}, see
Section~\ref{sec:exp-setup}.} As shown in Figure~\ref{fig:verif_amr}, optimal
convergence rates are retained both with uniform and $h$-adaptive mesh
refinements, regardless of extreme material contrast values and order of
approximation. Let us further justify this result:

\begin{figure}[!h]
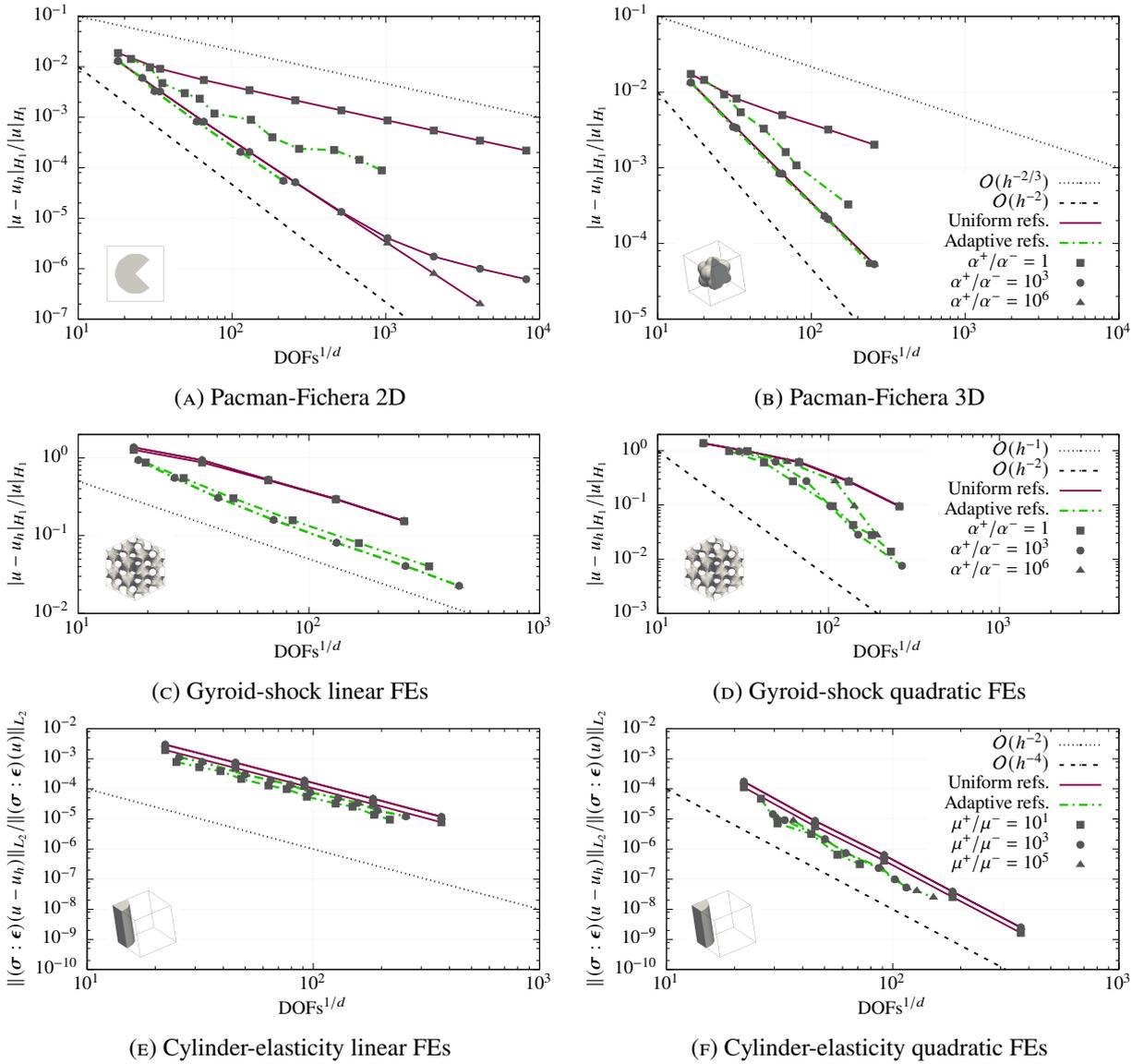

	\centering
	\begin{subfigure}[t]{0.48\textwidth}
		\scalebox{0.7}{\input{figures/b0c0d1e1f0h0i0-errors.tex}}
		\caption{Pacman-Fichera 2D}\label{fig:verif_amr_a}
	\end{subfigure}
	\begin{subfigure}[t]{0.48\textwidth}
		\scalebox{0.7}{\input{figures/b1c1d1e1f0h0i0-errors.tex}}
		\caption{Pacman-Fichera 3D}\label{fig:verif_amr_b}
	\end{subfigure} \\
	\begin{subfigure}[t]{0.48\textwidth}
		\scalebox{0.7}{\input{figures/b1c2d1e0f0h0i1-errors.tex}}
		\caption{Gyroid-shock linear \acp{fe}}\label{fig:verif_amr_c}
	\end{subfigure}
	\begin{subfigure}[t]{0.48\textwidth}
		\scalebox{0.7}{\input{figures/b1c2d1e1f0h0i1-errors.tex}}
		\caption{Gyroid-shock quadratic \acp{fe}}\label{fig:verif_amr_d}
	\end{subfigure} \\
	\begin{subfigure}[t]{0.48\textwidth}
		\scalebox{0.7}{\input{figures/b0c0d0e0f0h0i0-errors.tex}}
		\caption{Cylinder-elasticity linear \acp{fe}}\label{fig:verif_amr_e}
	\end{subfigure}
	\begin{subfigure}[t]{0.48\textwidth}
		\scalebox{0.7}{\input{figures/b0c0d0e1f0h0i0-errors.tex}}
		\caption{Cylinder-elasticity quadratic \acp{fe}}\label{fig:verif_amr_f}
	\end{subfigure}
	\caption[Convergence tests on $h$-adaptive meshes]{Convergence tests on
	$h$-adaptive meshes: (a)-(b) h-adaptivity test with the Fichera-corner
	problem~\eqref{eq:fichera} for quadratic \acp{fe}: Ag\ac{fem} reproduces the
	behaviour of standard \ac{fem} in body-fitted meshes, i.e. convergence rates
	with uniform refinements is limited by solution regularity, whereas optimal
	convergence rates are restored with AMR. (c)-(d) h-adaptivity test with the
	single-shock problem~\eqref{eq:single-shock} on the gyroid: $h$-Ag\ac{fem}
	holds (asymptotically) optimal convergence rates. (e)-(f) h-adaptivity test
	with the cylindrical inclusion problem~\eqref{eq:sukumar} on the cylinder:
	energy norm error using $h$-Ag\ac{fem} decays at optimal
	\emph{superconvergent} rates.}
	\label{fig:verif_amr}
\end{figure}

Even though the solution to the Fichera-corner does not depend on material
parameters, convergence rates do. In the Fichera case with uniform refinements,
global error decreases at a rate of 2:3, when discrete error concentrates in
$\Omega^-$, since $u^- \in H^{1+\frac{2}{3}}(\Omega^-)$ has limited regularity.
Conversely, standard convergence rates hold, when discrete error concentrates in
$\Omega^+$, where $u^+$ is smooth. Material contrast regulates which side of
$\Gamma$ initially contributes more to numerical error, although when $h \to 0$
global error always converges at the slowest rate. We see that, for
$k^+/k^- = 1$, global error clearly concentrates in $\Omega^-$, while
it concentrates in $\Omega^+$ for $k^+/k^- = 10^6$. For an
intermediate value, e.g.~$k^+/k^- = 10^3$, discrete error initially
concentrates in $\Omega^+$, but for $h$ small enough it shifts to $\Omega^-$.

As expected, $h$-adaptive refinements eliminate the influence of regularity of
$u^-$ on the convergence rates. However, as shown in Figure~\ref{fig:pacman},
different values of the material contrast produce different refinement patterns,
in consistence with the discrete error distribution, as discussed above. In
particular, mesh refinements concentrate in $\Omega^-$ (or $\Omega^+$), when
$k^+/k^-$ is small (or large).

Since the single-shock case in the gyroid is rather intricate, convergence rates
are initially suboptimal; optimal convergence rates are reached asymptotically
(especially for quadratic \acp{fe}). We observe that, in front of uniform
refinements, \ac{amr} is capable of entering faster into the asymptotic regime.
However, the pace at which this is achieved depends on material contrast. In
particular, larger values of $k^+/k^-$ slow down reaching optimal rates.

Apart from that, results for the linear elasticity problem also deserve attention.
We identify that the energy norm of the error decreases at a rate of 1:2 for
linear \acp{fe} and 1:4 for quadratic \acp{fe}. This means we obtain
superconvergence ($\mathcal{O}(h^{q+1})$) for linear \acp{fe} and ultraconvergence
($\mathcal{O}(h^{q+2})$) for quadratic \acp{fe}. Although we do not have
conclusive evidence, we believe this behaviour is explained by the fact that
Gauss-Legendre quadrature points on hexahedral cells are superconvergent stress
recovery points~\cite{zienkiewicz1992superconvergent}. In our case, when the cell
is not intersected by $\Gamma$, local errors $\| (\s : \eps) ( \u -\u_h )
\|_{\L^2(\Omega)}$ are integrated with standard Gauss-Legendre quadrature rules.
As a result, even though the approximated solution is not superconvergent, local
error is computed at points that are superconvergent. In contrast, quadrature
rules are locally modified in cut cells, as usual in unfitted \ac{fe}
methods~\cite{burman_cutfem_2015}; thus, local errors in those cells are not
computed at superconvergent points. In spite of this, it is clear from the
convergence plots that the behaviour of the global error in the energy norm is not
influenced by cut cells, i.e.~global error retains the local superconvergence
property that (only) holds in non-cut cells.

\subsection{Robustness with respect to cut location and material contrast}
\label{sec:robustness}

For the sensitivity of Ag\ac{fem} to cut location and material contrast, we
restrict ourselves to the Poisson benchmark~\eqref{eq:out-fe-space} in the
flower and popcorn interfaces and the linear elasticity
benchmark~\eqref{eq:sukumar} in the cylinder.

Our approach is similar to the one in~\cite{annavarapu2012robust}. It consists
in carrying out a batch of simulations in a biparametric space, considering
different material contrast and cut configurations, as shown in
Figure~\ref{fig:popcorn}. The procedure is as follows. We start with a reference
simulation in a unit cube $[0,1]^d$, that takes $k^+/k^- = 1$
for~\eqref{eq:out-fe-space}, or $\mu_+/\mu_- = 1$ for~\eqref{eq:sukumar}. The
unit cube is uniformly meshed with cell size $h = 2^{-6+q}$
for~\eqref{eq:out-fe-space}, and $h = 2^{-5+q}$ for~\eqref{eq:sukumar}, where
$q$ is the \ac{fe} interpolation order. The material perturbation simply
consists in varying the material contrast $k^+/k^-$ or $\mu_+/\mu_-$
of the reference simulation in the interval $[10^{-6},10^6]$. On the other hand,
to produce different cut configurations, the unit cube is scaled to
$[0,1+ah]^d$, where $a \in [-1,1]$. We remark that the number of mesh cells is
kept constant, i.e. the cell size after scaling is $\hat{h} = (1+ah)h$.

\begin{figure}[!h]
	\centering
	\begin{subfigure}[t]{0.32\textwidth}
		\includegraphics[width=\textwidth]{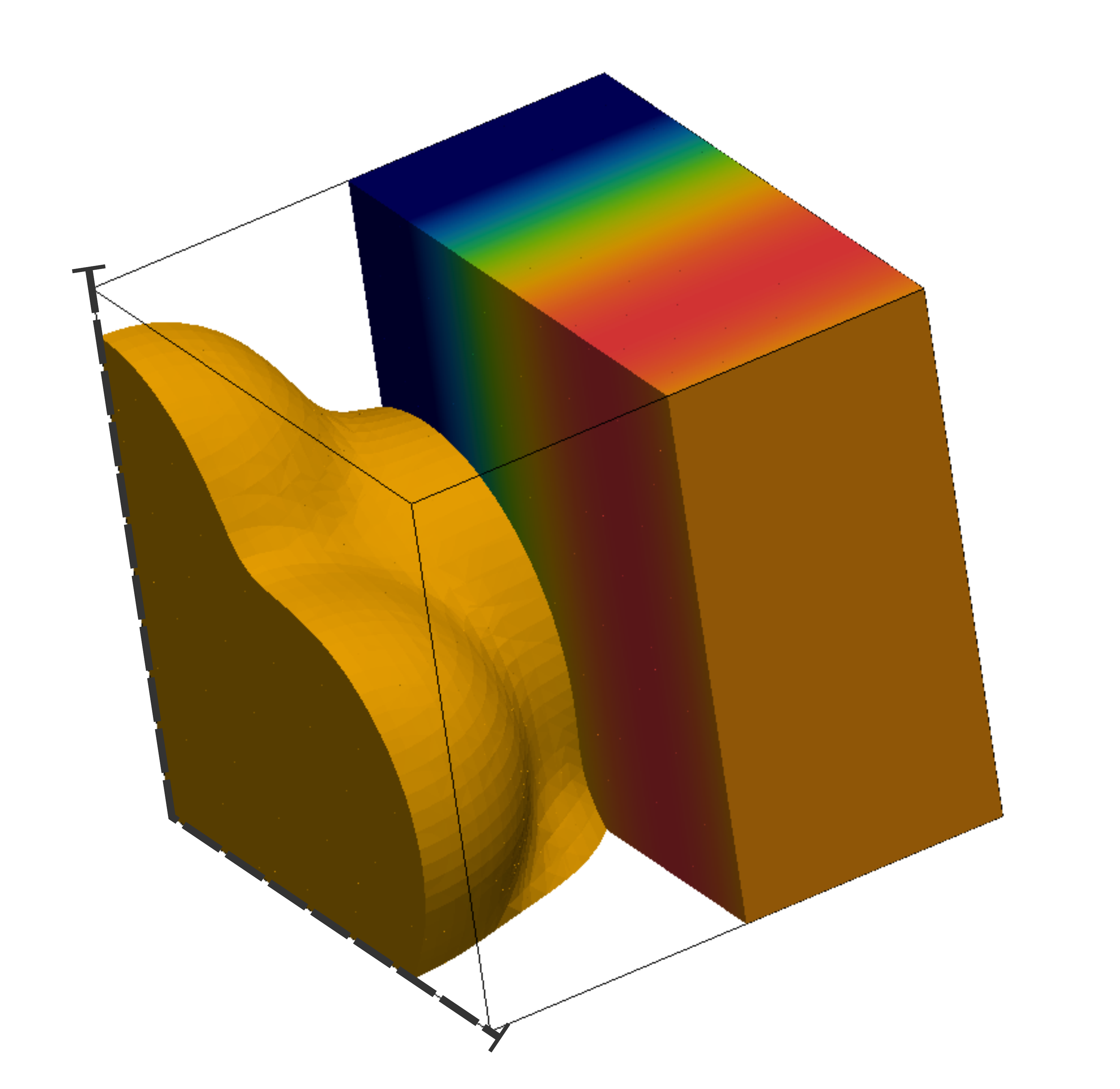}
		\caption{$k^+/k^- = 10^{-6}$ and $a = -1$}\label{fig:popcorn-a}
	\end{subfigure}
	\begin{subfigure}[t]{0.32\textwidth}
		\includegraphics[width=\textwidth]{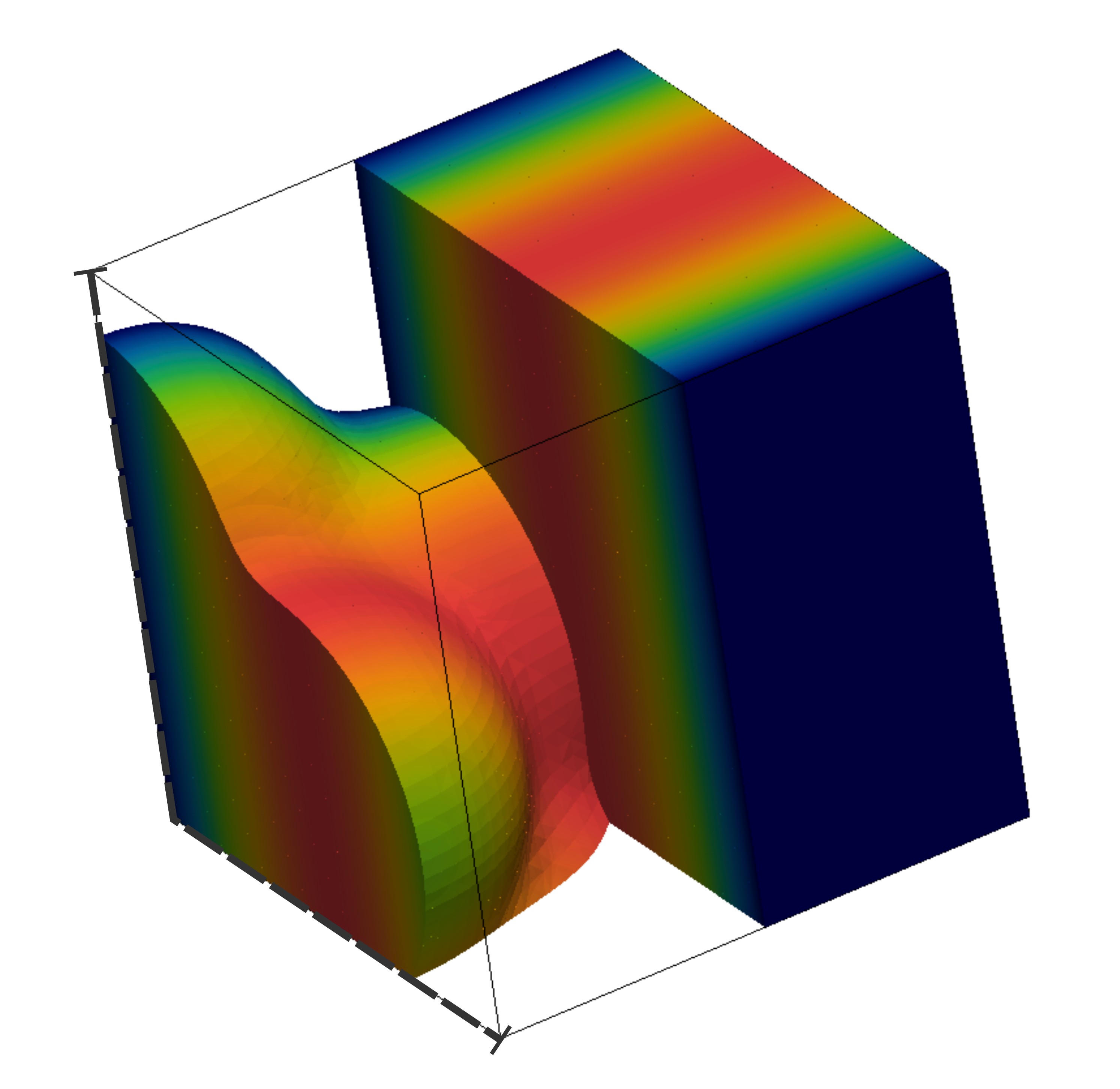}
		\caption{$k^+/k^- = 1$ and $a = 0$}\label{fig:popcorn-b}
	\end{subfigure}
	\begin{subfigure}[t]{0.32\textwidth}
		\includegraphics[width=\textwidth]{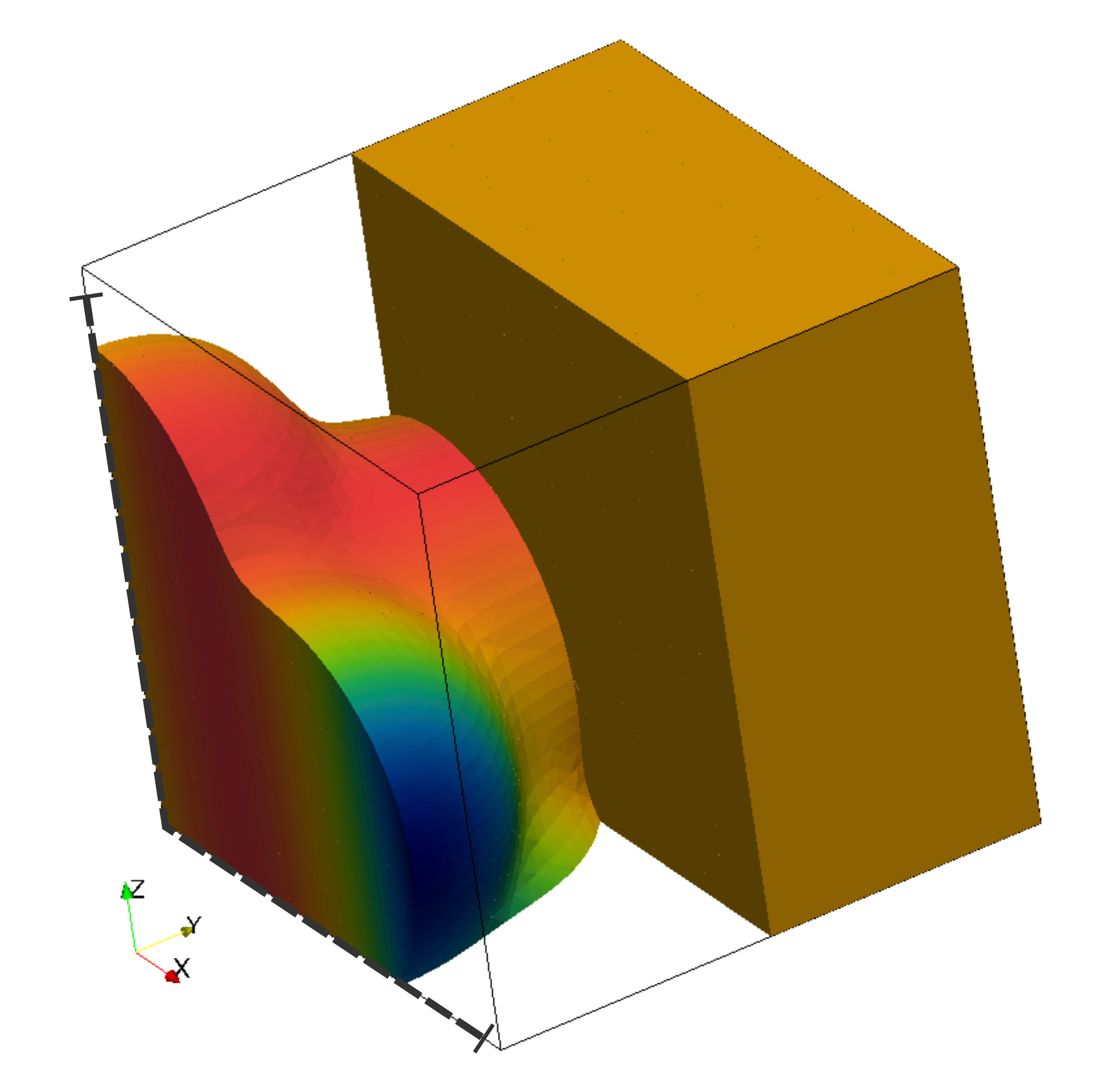}
		\caption{$k^+/k^- = 10^6$ and $a = 1$}\label{fig:popcorn-c}
	\end{subfigure}
	\caption[Illustration of the approach to study robustness w.r.t.~cut location
	and material contrast on the popcorn interface]{Illustration of the approach
	to study robustness w.r.t.~cut location and material contrast on the popcorn
	interface. Note that we only show the right half of the subdomain outside
	the popcorn flake. To study sensitivity to material contrast, we vary
	$k^+/k^-$ between $10^{-6}$ and $10^6$. To study sensitivity to
	cut location, we produce different cut locations by uniformly shrinking
	(Figure~\ref{fig:popcorn-a}) or stretching (Figure~\ref{fig:popcorn-c}) the
	physical domain with a parameter $a \in [-1,1]$ (dashed lines show the $x$
	and $z$ dimensions of the cube represented in Figure~\ref{fig:popcorn-b}, as
	reference to compare the different cube scalings).}
	\label{fig:popcorn}
\end{figure}

Given this setting, we launch simulations with Ag\ac{fem} for different pairs of
$(k^+/k^-,a)$ or $(\mu_+/\mu_-,a)$, until we sweep the range
$[10^{-6},10^6] \times [-1,1]$. We consider both serial and parallel
computations; the latter are carried out in a single MN-IV node, i.e.~48 tasks.
Along the sweep, we gather $H^1$-seminorm errors and condition number estimates.
Afterwards, we condense the results into colour maps that plot the values each
of these quantities in the $(k^+/k^-,a)$ or $(\mu_+/\mu_-,a)$ planes.
We discuss next some of the results obtained with this procedure, represented in
Figures~\ref{fig:sensitivity_h1},~\ref{fig:sensitivity_cn}
and~\ref{fig:sensitivity_le}.

\begin{figure}[!h]
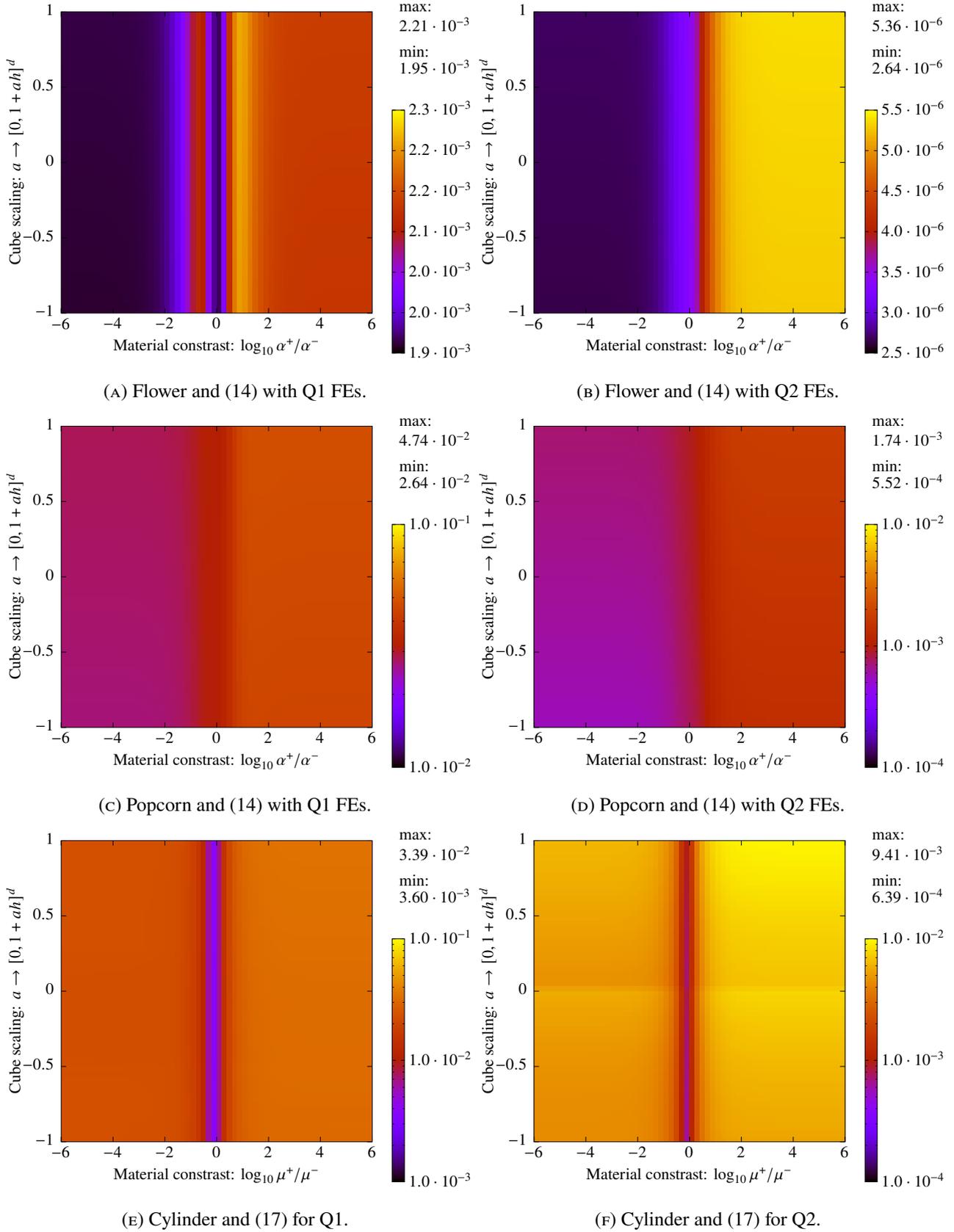

	\centering
	\begin{subfigure}[t]{0.47\textwidth}
		\scalebox{0.48}{\input{figures/run_a0b0c1d0e0f0g1_n1_h1_semi.tex}}
		\caption{Flower and~\eqref{eq:out-fe-space} with Q1 \acp{fe}.}
	\end{subfigure} \quad
	\begin{subfigure}[t]{0.47\textwidth}
		\scalebox{0.48}{\input{figures/run_a0b0c1d1e0f0g1_n1_h1_semi.tex}}
		\caption{Flower and~\eqref{eq:out-fe-space} with Q2 \acp{fe}.}
	\end{subfigure} \\ \vspace{-0.5cm}
	\begin{subfigure}[t]{0.47\textwidth}
		\scalebox{0.48}{\input{figures/run_a1b1c1d0e0f0g1_n1_h1_semi.tex}}
		\caption{Popcorn and~\eqref{eq:out-fe-space} with Q1 \acp{fe}.}
	\end{subfigure} \quad
	\begin{subfigure}[t]{0.47\textwidth}
		\scalebox{0.48}{\input{figures/run_a1b1c1d1e0f0g1_n1_h1_semi.tex}}
		\caption{Popcorn and~\eqref{eq:out-fe-space} with Q2 \acp{fe}.}
	\end{subfigure} \\ \vspace{-0.5cm}
	\begin{subfigure}[t]{0.47\textwidth}
		\scalebox{0.48}{\input{figures/run_a0b0c0d0e0f0g0_n1_h1_semi.tex}}
		\caption{Cylinder and~\eqref{eq:sukumar} for Q1.}
	\end{subfigure} \quad
	\begin{subfigure}[t]{0.47\textwidth}
		\scalebox{0.48}{\input{figures/run_a0b0c0d1e0f0g0_n1_h1_semi.tex}}
		\caption{Cylinder and~\eqref{eq:sukumar} for Q2.}
	\end{subfigure}
	\caption[Sensitivity test of Ag~\ac{fem} w.r.t. material contrast and cut
	location. $H^1$-seminorm of relative error]{Sensitivity test of Ag~\ac{fem}
	w.r.t. material contrast and cut location: For the cases described in
	Section~\ref{sec:robustness}, the $H^1$-seminorm relative error, i.e.
	$|u-u_h|_{H^1}/|u|_{H^1}$, is barely sensitive to material contrast and cut
	location.}
	\label{fig:sensitivity_h1}
\end{figure}

As seen in Figure~\ref{fig:sensitivity_h1}, numerical errors in the
$H^1$-seminorm are barely sensitive to material contrast and cut location. This
behaviour is consistently observed in all three cases and linear/quadratic
\acp{fe}. Although, for the linear elasticity case~\eqref{eq:sukumar}, the error
decreases one order of magnitude around $\mu_+/\mu_- = 1$, this is attributed to
the fact that the solution is more regular when $\mu_+/\mu_- = 1$ (it does not
have a kink), not to the material contrast.

In Figure~\ref{fig:sensitivity_cn}, we plot condition numbers obtained with one of
the three cases, namely the Poisson equation~\eqref{eq:out-fe-space} on the
popcorn interface. We have additionally swept the parametric space with
Std\ac{fe}, for comparison with Ag\ac{fe}; it clearly illustrates the effect of
the latter on the conditioning of the matrix. As shown in
Figures~\ref{fig:sensitivity_cn_a} and~\ref{fig:sensitivity_cn_b}, the condition
number of the linear system is extremely high for Std\ac{fe}. \myadded{While these
large estimates are likely affected by a large numerical error, they clearly
demonstrate the high sensitivity of Std\ac{fem} to the cut configuration.}
Besides, the problem can be so ill-conditioned that the local eigenvalue solver to
compute $\beta$ breaks down. In contrast, Ag\ac{fem} is fully robust and brings
down condition numbers to values that the solvers can cope with, see
Figures~\ref{fig:sensitivity_cn_c} and~\ref{fig:sensitivity_cn_d}. Besides,
dependence on cut location vanishes completely, although there is a clear
sensitivity to material contrast. Nonetheless, this dependence is not present in
the condition number of the \emph{diagonally-scaled} system matrix. Indeed, as
seen in Figures~\ref{fig:sensitivity_le_a} and~\ref{fig:sensitivity_le_b}, the
condition number after diagonal scaling becomes barely sensitive to both cut
location and material contrast. Furthermore, condition numbers are around
$\mathcal{O}(10^4)$, in the worst case, which is a rather low value for unfitted
3D+Q2 simulations. The same outcome is observed for the linear elasticity case, as
shown in Figures~\ref{fig:sensitivity_le_c} and~\ref{fig:sensitivity_le_d}. 

\begin{figure}[!h]
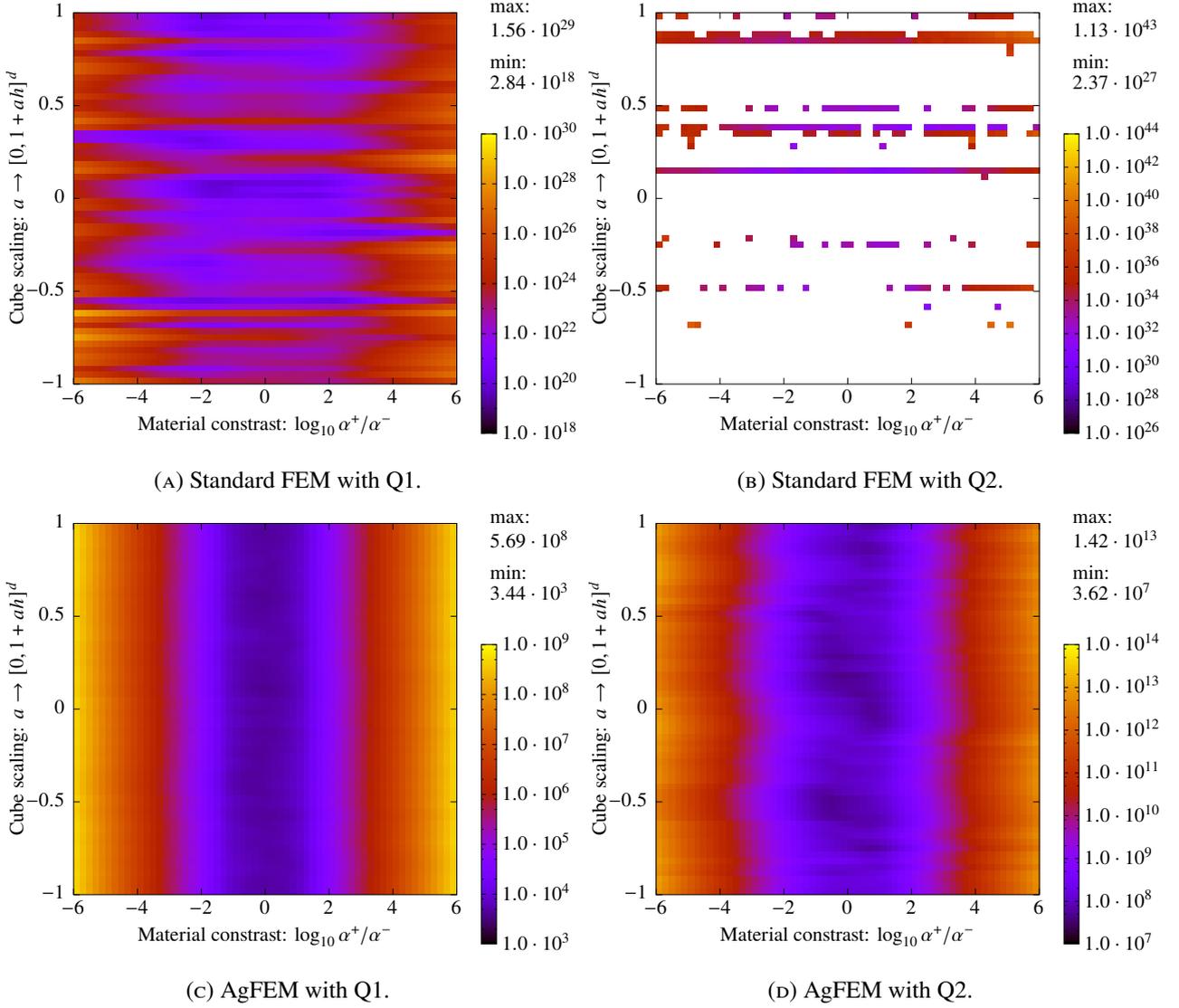

	\centering
	\begin{subfigure}[t]{0.47\textwidth}
		\scalebox{0.48}{\input{figures/run_a1b1c0d0e0f0g1_n1_condition_numbers.tex}}
		\caption{Standard \ac{fem} with Q1.}\label{fig:sensitivity_cn_a}
	\end{subfigure} \quad
	\begin{subfigure}[t]{0.47\textwidth}
		\scalebox{0.48}{\input{figures/run_a1b1c0d1e0f0g1_n1_condition_numbers.tex}}
		\caption{Standard \ac{fem} with Q2.}\label{fig:sensitivity_cn_b}
	\end{subfigure} \\ \vspace{-0.5cm}
	\begin{subfigure}[t]{0.47\textwidth}
		\scalebox{0.48}{\input{figures/run_a1b1c1d0e0f0g1_n1_condition_numbers.tex}}
		\caption{Ag\ac{fem} with Q1.}\label{fig:sensitivity_cn_c}
	\end{subfigure} \quad
	\begin{subfigure}[t]{0.47\textwidth}
		\scalebox{0.48}{\input{figures/run_a1b1c1d1e0f0g1_n1_condition_numbers.tex}}
		\caption{Ag\ac{fem} with Q2.}\label{fig:sensitivity_cn_d}
	\end{subfigure}
	\caption[Sensitivity test w.r.t. material contrast and cut location for
	popcorn example. Condition number of system matrix]{Sensitivity test w.r.t.
	material contrast and cut location for popcorn example. Examination of
	$\mathtt{condest}(A)$ exposes how lack of robustness and dependency on cut
	location in standard \ac{fem} is not present in Ag\ac{fem}.}
	\label{fig:sensitivity_cn}
\end{figure}

\begin{figure}[!h]
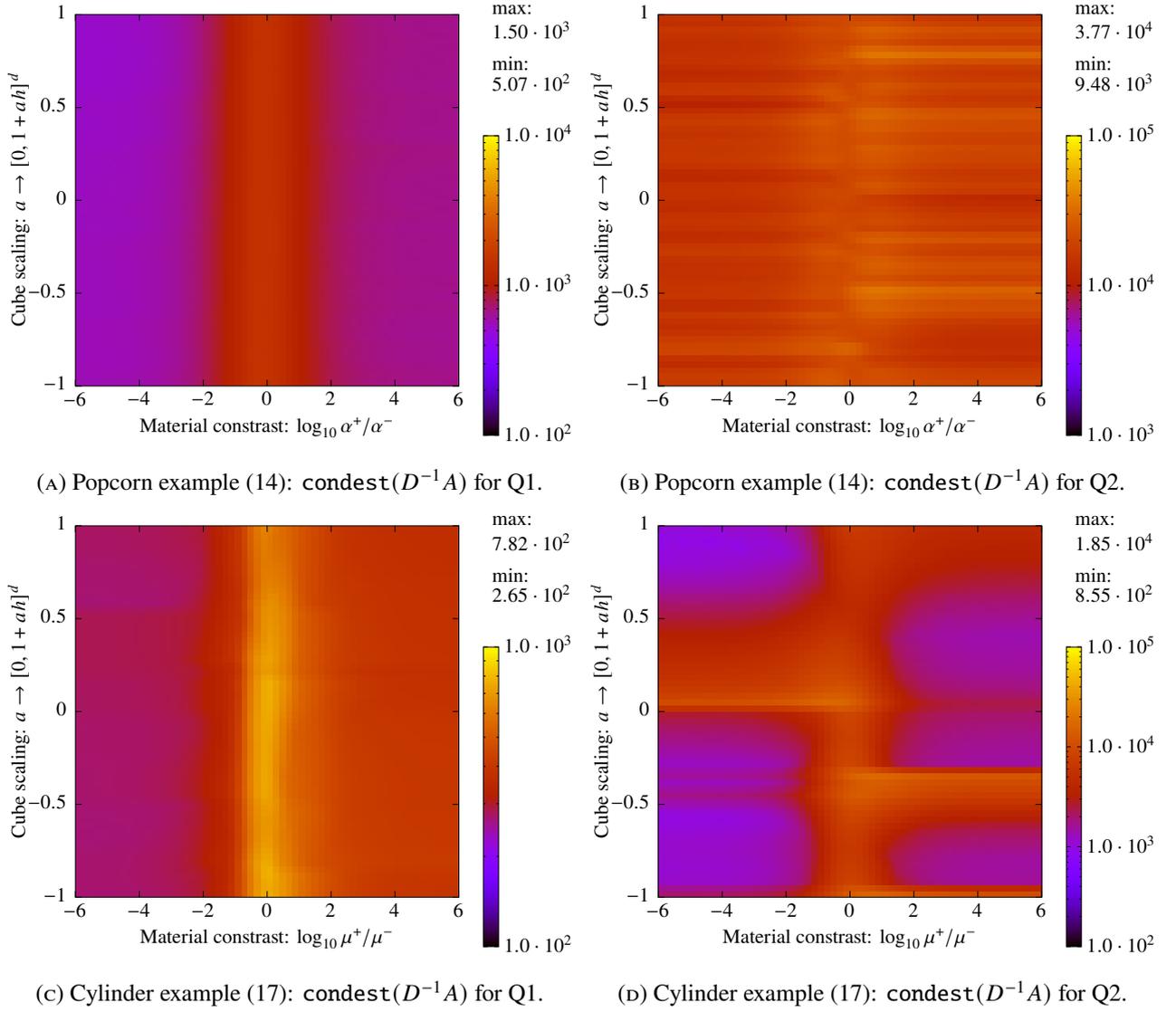

	\centering
	\begin{subfigure}[t]{0.47\textwidth}
		\scalebox{0.48}{\input{figures/run_a1b1c1d0e1f0g1_n1_condition_numbers.tex}}
		\caption{Popcorn example \eqref{eq:out-fe-space}: 
		$\mathtt{condest}(D^{-1}A)$ for Q1.}\label{fig:sensitivity_le_a}
	\end{subfigure} \quad
	\begin{subfigure}[t]{0.47\textwidth}
		\scalebox{0.48}{\input{figures/run_a1b1c1d1e1f0g1_n1_condition_numbers.tex}}
		\caption{Popcorn example \eqref{eq:out-fe-space}: 
		$\mathtt{condest}(D^{-1}A)$ for Q2.}\label{fig:sensitivity_le_b}
	\end{subfigure} \\ \vspace{-0.5cm}
	\begin{subfigure}[t]{0.47\textwidth}
		\scalebox{0.48}{\input{figures/run_a0b0c0d0e0f0g0_n1_condition_numbers.tex}}
		\caption{Cylinder example \eqref{eq:sukumar}: 
		$\mathtt{condest}(D^{-1}A)$ for Q1.}\label{fig:sensitivity_le_c}
	\end{subfigure} \quad
	\begin{subfigure}[t]{0.47\textwidth}
		\scalebox{0.48}{\input{figures/run_a0b0c0d1e0f0g0_n1_condition_numbers.tex}}
		\caption{Cylinder example \eqref{eq:sukumar}: 
		$\mathtt{condest}(D^{-1}A)$ for Q2.}\label{fig:sensitivity_le_d}
	\end{subfigure}
	\caption[Sensitivity test w.r.t. material contrast and cut location for
	popcorn example. Condition number of diagonally-scaled system matrix]{In
	Ag\ac{fem}, condition number of the diagonally-scaled system matrix,
	i.e.~$\mathtt{condest}(D^{-1}A)$, does not depend on cut location or
	material contrast and is effectively controlled; all condition numbers are
	down to $\mathcal{O}(10^4)$, in the worst case.}
	\label{fig:sensitivity_le}
\end{figure}

\subsection{Weak-scaling analysis}
\label{sec:weak-scaling}

We carry out weak-scaling tests for three $h$-adaptive cases studied in the
convergence tests: (1) the Pacman-Fichera 3D with quadratic \acp{fe} for
$k^+/k^- = 1$ (Figure~\ref{fig:verif_amr_b}) and the gyroid-shock with
(2) linear (Figure~\ref{fig:verif_amr_c}) and (3) quadratic
(Figure~\ref{fig:verif_amr_d}) \acp{fe} for $k^+/k^- = 10^3$. In the
analysis, we aim (a) to deploy a testing methodology that accounts for the fact
that cells (and \acp{dof}) that cut the interface are replicated and (b) to
demonstrate that both the cell aggregation scheme and the set up of the
Ag\ac{fe} space $\V_h$ are computationally (weakly) scalable. In the sequel we
use $N_\square$ and $n_\square$ to denote global (i.e.~referring to the whole
mesh/domain) and local (i.e.~referring to the processor-owned submesh/subdomain)
sizes/cardinalities of a quantity $\square$.

Our strategy is analogous to the one detailed in~\cite{inpreparation2020}; it
consists in repeating the convergence test, adjusting the number of processors
to compute each point in the error plot. The goal is to impose that a suitable
quantity remains (approximately) invariant across the whole convergence test. In
addition, given a point, it is desirable that the invariant \emph{also} holds
across processors, in order to reduce noise in the results due to
interprocessor imbalance. In \ac{fe} simulations, the typical invariant is the
(local) number of (free) \acp{dof} each processor owns, since complexity of
major phases (e.g.~solving the linear system) depends on the number of
\acp{dof}. However, it is difficult to balance \acp{dof} across processors in
our meshes, which have both free and (hanging and ill-posed) constrained
\acp{dof} that overlap at the interface. For this reason, we choose as invariant
the local number of active cells $n_{\act,\mathrm{cells}}$, where the global
counterpart is $N_{\act,\mathrm{cells}} = N_{\T_{h,\act}^+} +
N_{\T_{h,\act}^-}$, i.e.~the number of cells in $\T_h$, but counting cells at
the interface twice.

According to this, we consider the sequence of optimal \ac{amr} meshes, obtained
in the convergence test, and compute the number of processors for the
weak-scaling analysis as\[
	P^i = P^1 \left\lfloor
	\frac{N_{\act,\mathrm{cells}}^i}{N_{\act,\mathrm{cells}}^1}
	\right\rfloor, \ i > 1,
\]where superscript $i > 1$ refers to each element in the sequence of optimal
meshes (points in the error curve), $P^1$ is a fixed initial number of
processors and $\lfloor \cdot \rfloor$ is the \textit{floor} function; given a
real number $x$, $\lfloor x \rfloor$ is the greatest integer less than or equal
to $x$. Table~\ref{tab:partitions} gathers the sequences $\left\{P^i\right\}_{i
> 1}$ obtained following this procedure, for the three cases that are studied in
this section. We observe that (1) it is clearly more straightforward to equally
distribute active cells among processors than \acp{dof} and (2) the (average)
local number of free \acp{dof} grows mildly with $i > 1$. Hence, this approach
allows us to (conservatively) examine how the problem scales with \acp{dof},
avoiding cumbersome strategies to balance \acp{dof}.

\begin{table}[!h]
	\centering
	\begin{small}
		\begin{tabular}{lrrrrrr}
		\toprule
		\multicolumn{7}{c}{Pacman-Fichera 3D AMR-Q2 and
		$n_{\act,\mathrm{cells}} \approx 1.2k$} \\
		\midrule
		$P$ & 1 & 5 & 14 & 36 & 58 & 457 \\
		$N_{\act,\mathrm{cells}}$ & 1.2k & 5.8k & 16k & 43k & 67k & 533k \\
		$N_{\mathrm{dofs}}$ & 8.0k & 42k & 118k & 315k & 510k & 4,031k \\
		$n_{\mathrm{dofs}}$ & 8.0k & 8.3k & 8.3k & 8.6k & 8.6k & 8.8k \\
		\midrule
		\multicolumn{7}{c}{Gyroid-shock AMR-Q1 and
		$n_{\act,\mathrm{cells}} \approx 46k$} \\
		\midrule
		$P$ & 2 & 9 & 57 & 556 & 2,150 & \\
		$N_{\act,\mathrm{cells}}$ & 92k & 440k & 2,637k & 19,471k & 98,516k & \\
		$N_{\mathrm{dofs}}$ & 66k & 348k & 2,288k & 18,056k & 89,822k & \\
		$n_{\mathrm{dofs}}$ & 33k & 36k & 40k & 42k & 42k & \\
		\midrule
		\multicolumn{7}{c}{Gyroid-shock AMR-Q2 and
		$n_{\act,\mathrm{cells}} \approx 4.7k$} \\
		\midrule
		$P$ & 1 & 4 & 13 & 33 & 99 & 556 \\
		$N_{\act,\mathrm{cells}}$ & 4.7k & 19k & 62k & 157k & 474k & 2,641k \\
		$N_{\mathrm{dofs}}$ & 27k & 118k & 409k & 1,065k & 3,306k & 19,430k \\
		$n_{\mathrm{dofs}}$ & 27k & 30k & 32k & 32k & 33k & 35k \\
		\bottomrule
		\end{tabular}
	\end{small}\vspace{0.2cm}
	\caption[Number of subdomains $P$, global active cells
	$N_{\act,\mathrm{cells}}$, global \acp{dof} $N_{\mathrm{dofs}}$ and local
	\acp{dof} $n_{\mathrm{dofs}}$ for the cases considered in the weak scaling
	tests]{Number of subdomains $P$, global active cells
	$N_{\act,\mathrm{cells}}$, global \acp{dof} $N_{\mathrm{dofs}}$ and local
	\acp{dof} $n_{\mathrm{dofs}}$ for the cases considered in the weak scaling
	tests of Figure~\ref{fig:weak_scaling}. For each case, local active cells $n_{\act,\mathrm{cells}}$, remains
	quasi-constant with $P$. Besides, $n_{\mathrm{dofs}}$ (slowly) increases 
	monotonically.}
	\label{tab:partitions}
\end{table}

Once established the weak-scaling methodology, our purpose is to show that
remarkable scalability of ($h$-adaptive) Ag\ac{fem}, reported in previous works
for problems with unfitted boundary~\cite{Verdugo2019,inpreparation2020}, is
preserved for interface problems. As those works have already addressed weak
scalability of the whole \ac{fe} simulation pipeline, we focus on reporting wall
clock times spent in the two main Ag\ac{fem}-specific phases, i.e.~those phases
particular of our approach, not present in other unfitted techniques. The two
phases are (1) cell aggregation, see Section~\ref{sec:interface-ag}, and (2)
setup of the Ag\ac{fe} space, see Section~\ref{sec:interface-agfem}. As finding
the optimal mesh for each $i > 1$ is an iterative \ac{amr} process, we only
monitor these quantities for the optimal mesh (last iteration). We note that,
even though (1) and (2) are critical phases of the simulation, from the
computational viewpoint, they are not the most prominent ones. Thus, Ag\ac{fem}
does not affect much overall run time with respect to a standard (ill-posed)
Galerkin method.

To allocate the MPI tasks in the MN-IV supercomputer, we resort to the default
task placement policy of Intel MPI (v2018.4.057) with partially filled nodes.
For each point of the test, the number of nodes $N^i$ is selected as $N^i =
\left\lceil P^i/48 \right\rceil$, where $\lceil \cdot \rceil$ is the
\textit{ceiling} function; given a real number $x$, $\lceil x \rceil$ is the
smallest integer more than or equal to $x$. If $P^i$ is not multiple of 48, the
placement policy fully populates the first $N-1$ nodes with 48 MPI tasks per
node; the remaining $P^i - 48(N-1)$ MPI tasks are mapped to the last node.

Figure~\ref{fig:weak_scaling} gathers all the quantities surveyed in weak
scaling tests. The main phases of $h$-adaptive Ag\ac{fem} exhibit remarkable
scalability for the three cases considered. We observe that the number of local
active cells $n_{\act,\mathrm{cells}}$ and \acp{dof} $n_{\mathrm{dofs}}^i$, $i >
1$ for the gyroid-shock AMR-Q1 case are significantly larger than for the other
two cases. That is why this case yields the largest computational times.

\begin{figure}[!h]
	\centering
	\scalebox{0.8}{\input{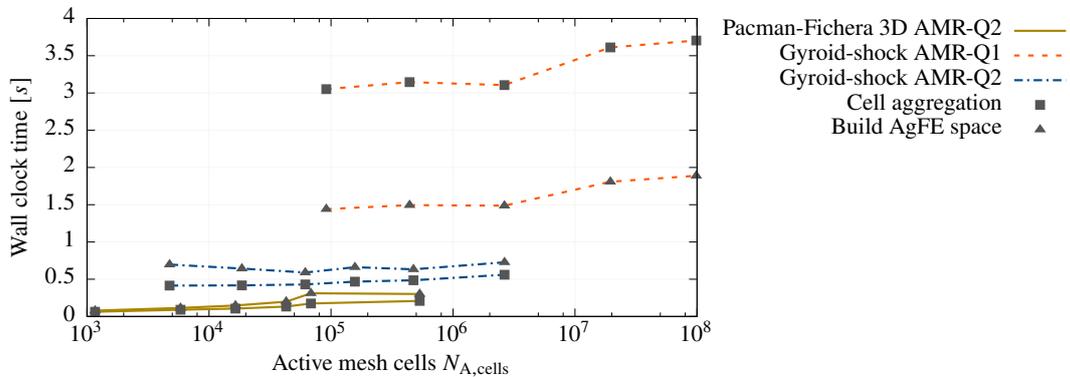}}
	\vspace{-0.15cm}
	\caption{Weak scaling tests on selected interface problems
	from convergence tests in Section~\ref{sec:conv-tests} up to 2,150 MPI
	tasks, as reported in Table~\ref{tab:partitions}.}
	\label{fig:weak_scaling}
\end{figure}

\section{Conclusions}
\label{sec:conclusions}

This work addressed a novel $h$-adaptive aggregated \ac{fe} method for
large-scale (unfitted) interface elliptic boundary value problems. Our
methodology is grounded on the well-established approach of weakly coupling
interface-overlapping discretisations~\cite{hansbo2002unfitted} and the recently
developed $h$-adaptive Ag\ac{fe} method~\cite{inpreparation2020} for unfitted
boundary elliptic problems. The study of the new method is accompanied with
complete theoretical characterisation and thorough numerical experimentation on
a suite of Poisson and linear elasticity ($hp$-\ac{fem}) benchmarks with complex
interface shapes.

As main contributions of the paper, we have introduced a (a) natural extension of
the (distributed-memory) cell aggregation algorithm in~\cite{Verdugo2019} for
$n$-interface problems. We have shown that (b) Ag\ac{fe} spaces easily blend to
the typical Cartesian-product approximation structures of interface-overlapping
meshes. We have proven (c) well-posedness and optimal approximation properties of
a \ac{sipm}-Ag\ac{fem} discrete formulation for the irreducible linear elasticity
problem. Robustness w.r.t.~cut location is ensured, by inheriting cut-independent
estimates from Ag\ac{fem} in unfitted boundaries, while robustness w.r.t.~material
contrast is achieved, by using the same weighted average of body-fitted \ac{dg}
methods. Besides, the resulting method admits (d) straightforward implementation
on top of an existing large-scale implementation of Ag\ac{fem} for unfitted
boundary problems. To conclude, exhaustive numerical tests have exposed (e)
optimal ($h$-adaptive) approximation capability, robustness \myadded{with respect
to} cut location and material contrast and remarkable scalability on parallel
adaptive Cartesian tree-based meshes.

Our study offers compelling insight and evidence of the potential of Ag\ac{fem}
as an effective large-scale \ac{fe} solver for complex multiphase and
multiphysics problems modelled by \acp{pde}. Extension to any of those problems
is object of future work. Additionally, the paper provides useful guidance in
applying other unfitted \ac{cg} methods to interface problems, especially those
relying on cell aggregation.

\section*{Acknowledgements}

\thethanks

\begin{small}

\setlength{\bibsep}{0.0ex plus 0.00ex}
\bibliographystyle{myabbrvnat}
\bibliography{art043}

\end{small}

\end{document}